\renewcommand\vec[1]{\overrightarrow{#1}}
\newcommand\cev[1]{\overleftarrow{#1}}
\begin{document}
\title{Almost-nowhere intersection of Cantor sets, and sufficient sampling of their cumulative distribution functions}
\author{Allison Byars}
\address{Department of mathematics, North Dakota State University, 1210 Albrecht Boulevard, Fargo, ND 58102}
\email{allison.byars@ndsu.edu}
\author{Evan Camrud\textsuperscript{*}}
\address{Department of mathematics, Iowa State University, 411 Morrill Road, Ames, IA 50011}
\email{ecamrud@iastate.edu}
\thanks{\textsuperscript{*}Corresponding author}
\author{Steven N. Harding}
\address{Department of mathematics, Iowa State University, 411 Morrill Road, Ames, IA 50011}
\email{sharding@iastate.edu}
\author{Sarah McCarty}
\address{Department of mathematics, University of Nebraska at Omaha, 6001 Dodge Street
Omaha, NE 68182}
\email{smccarty@unomaha.edu}
\author{Keith Sullivan}
\address{Department of mathematics, Concordia College, 901 8th St. S. Moorhead, MN 56562}
\email{ksulliv1@cord.edu}
\author{Eric S. Weber}
\address{Department of mathematics, Iowa State University, 411 Morrill Road, Ames, IA 50011}
\email{esweber@iastate.edu}
\subjclass[2000]{Primary: 94A20, 28A80; Secondary 26A30, 11K16, 11K55}
\keywords{Fractal, Cantor Set, Sampling, Interpolation, Normal Numbers}
\date{\today}
\begin{abstract}
Cantor sets are constructed from iteratively removing sections of intervals. This process yields a cumulative distribution function (CDF), constructed from the invariant measure associated with their iterated function systems. Under appropriate assumptions, we identify sampling schemes of such CDFs, meaning that the underlying Cantor set can be reconstructed from sufficiently many samples of its CDF.  To this end, we prove that two Cantor sets have almost-nowhere (with respect to their respective invariant measures) intersection. 
\end{abstract}
%\abstract{Abstract. }
\maketitle
\newtheorem{theorem}{Theorem}[section]
\newtheorem{defn}{Definition}[section]
\newtheorem{conjecture}[theorem]{Conjecture}
\newtheorem*{counter}{Counterexample}
\newtheorem{exmp}{Example}
\newtheorem{lemma}{Lemma}[section]
\newtheorem{obs}{Observation}[section]
\newtheorem{cor}{Corollary}[section]
\newtheorem{remark}{Remark}[section]
\newtheorem{proposition}{Proposition}[section]

\section{Introduction and Motivations}

A Cantor set is the result of an infinite process of removing sections of an interval—$[0,1]$ in this paper—in an iterative fashion. The set itself consists of the points remaining after the removal of intervals specified by two parameters: the scale factor $N$ and digit set $D$. The positive integer $N$ determines how many equal intervals each extant segment is divided into per iteration, while $D \subset \{0,...,N-1\}$ enumerates which of the $N$ intervals of the segments will be preserved in each iteration. The Cantor set determined by such an $N$ and $D$ is denoted by $C_{N,D}$. Another notation to describe $N,D$ is to consider a vector $\vec{B}=(b_0,b_1,...,b_{N-1})\in (\mathbb{Z}_2)^N$, where $b_i= 1$ if $i \in D$ and $b_i = 0$ if $i \notin D$. We will also write $\vec{B}(i) \coloneqq b_i$. Further, $\|\vec{B}\|\coloneqq\sum_{i=0}^{N-1}b_i=|D|$. $\vec{B}$ is referred to as the binary representation, and we denote the Cantor set determined by $\vec{B}$ as $C_{\vec{B}}$. In this sense, both $C_{N,D}$ and $C_{\vec{B}}$ can be used to describe a Cantor set, and we naturally associate $N,D$ with its corresponding $\vec{B}$. Note that in this work, all indexing will start with zero. In addition, special cases exist in which a Cantor set will be considered degenerate. In particular, $C_{\vec{B}}$ is not considered when the set is empty, a one-point set, or $[0,1]$. Under this definition, there does not exist a Cantor set with $N<3$ or $\|\vec{B}\|$ equal to 0, 1, or $N$. For an example of a legitimate Cantor set, $C_{(1,0,1)}$ is the well-known ternary Cantor set (Figure 1). We also provide an illustration of the iterative construction of the Cantor set corresponding to $\vec{B}=(1,1,0,1)$ (Figure 2).

Each Cantor set yields a Cumulative Distribution Function (CDF), which we define formally in Definition \ref{D:CDF}.  We denote the class of all such CDFs by $\mathscr{F}$.  We consider the problems of sampling and interpolation of functions in $\mathscr{F}$.  By sampling, we mean the reconstruction of an unknown function $F \in \mathscr{F}$ from its samples $\{ F(x_{i}) \}_{i \in I}$ at known points $\{ x_{i} \}_{i \in I}$ in its domain (for an introduction to sampling theory, see \cite{BF01a,AG00a}).  By interpolation, we mean the construction of a function $F \in \mathscr{F}$ that satisfies the constraints $F(x_{i}) = y_{i}$ for \emph{a priori} given data $\{ (x_{i}, y_{i}) \}_{i \in I}$. Note that the premise of the sampling problem is that there is a unique $F \in \mathscr{F}$ that satisfies the available data, whereas the interpolation problem may not have the uniqueness property. Depending on the context, $I$ can be either finite or infinite. In this paper we focus on the finite case.

\begin{figure}[!ht]
  \centering
  \begin{minipage}[b]{0.4\textwidth}
    \includegraphics[width=\textwidth]{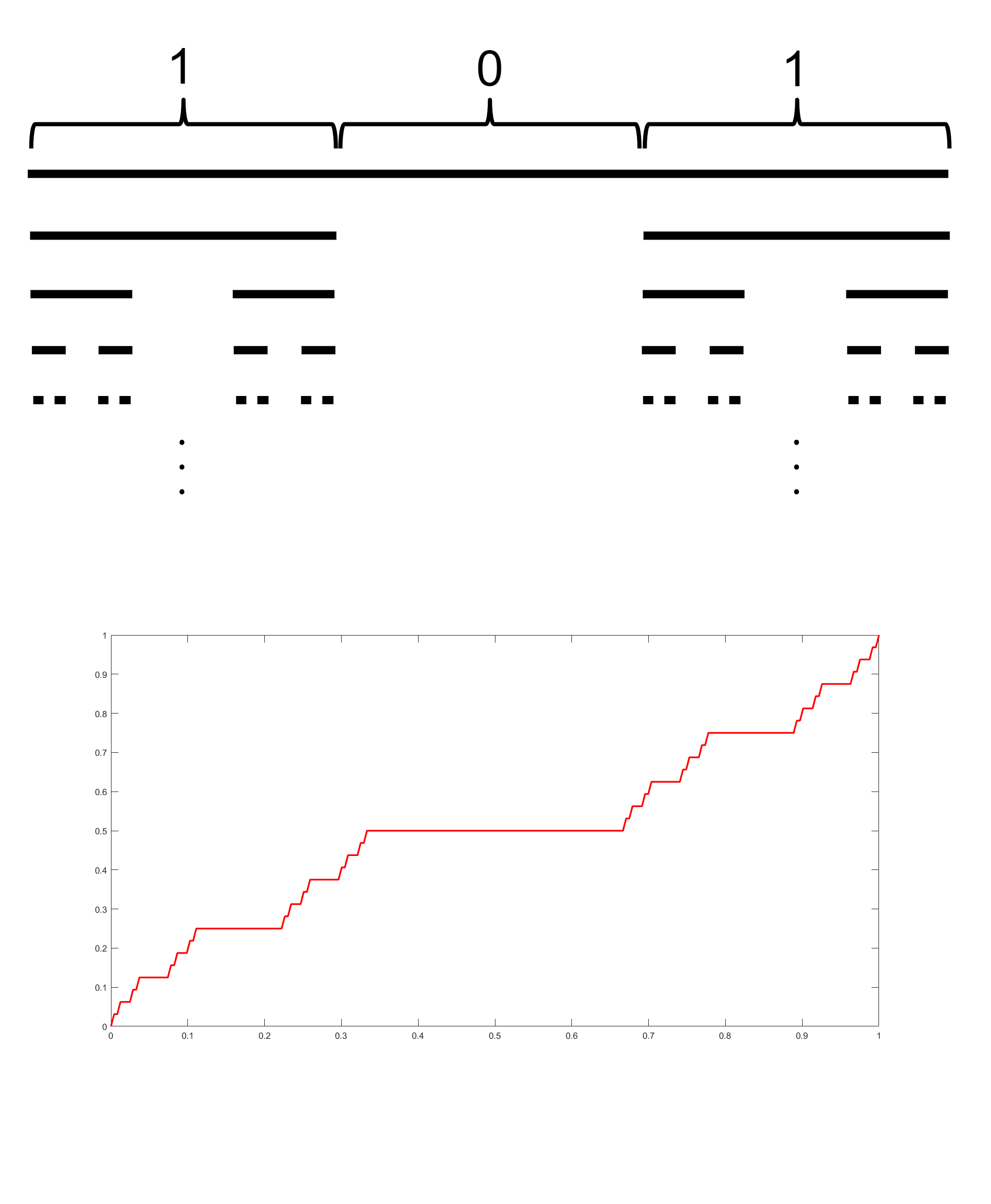}
    \caption{$C_{(1,0,1)}$ and $F_{(1,0,1)}$}
  \end{minipage}
  \hfill
  \begin{minipage}[b]{0.4\textwidth}
    \includegraphics[width=\textwidth]{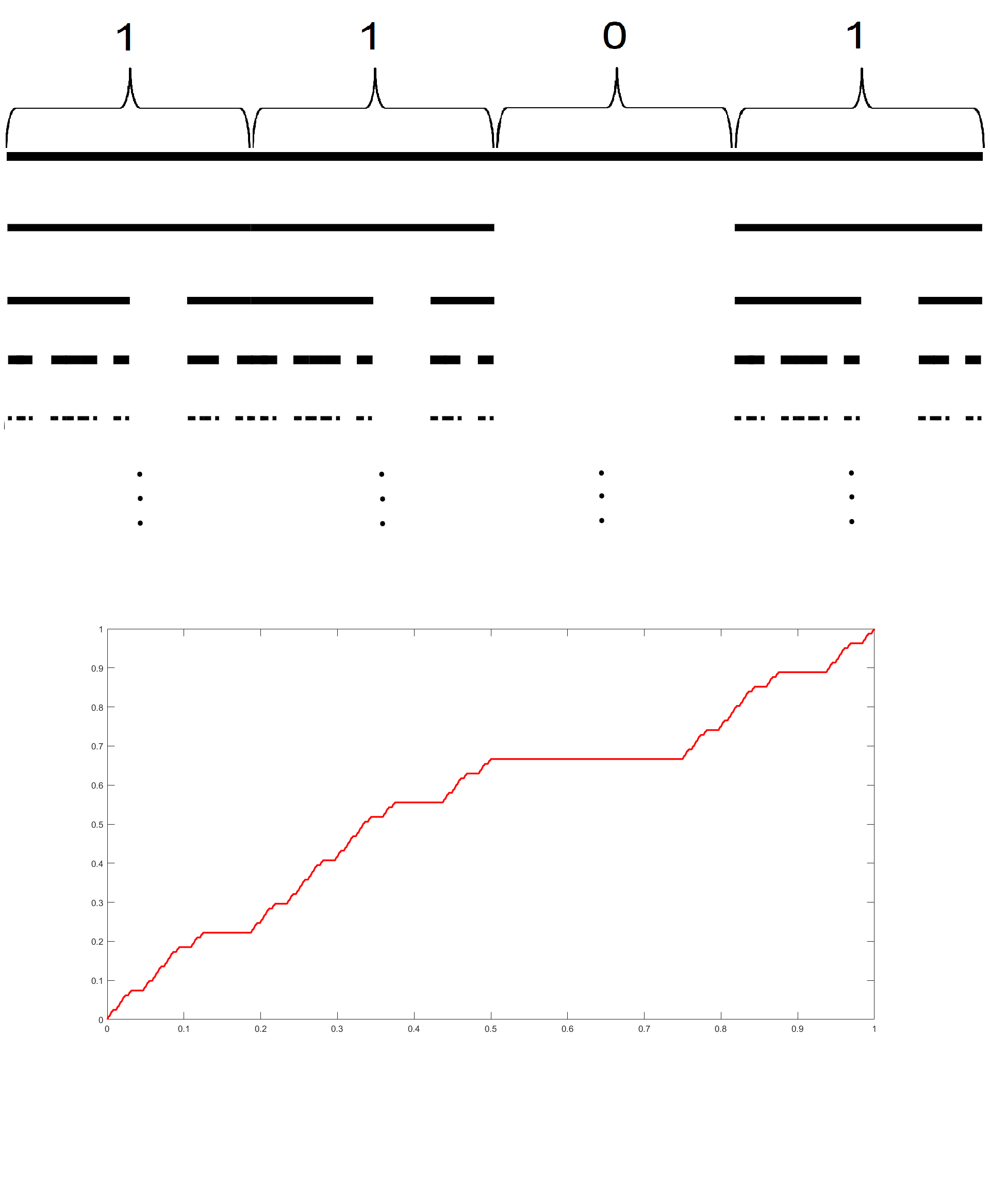}
    \caption{$C_{(1,1,0,1)}$ and $F_{(1,1,0,1)}$}
  \end{minipage}
\end{figure}

To be more precise regarding sampling CDFs, we formulate the problem as follows: Fix  $\mathscr{G} \subset \mathscr{F}$. For which sets of sampling points $\{ x_{i} \}_{i \in I}$ does the following implication hold: 
\begin{equation} \label{Eq:sampling}
F,G \in \mathscr{G} \text{ and } F(x_{i}) = G(x_{i}) \ \forall i \in I \Rightarrow F = G?
\end{equation}
In the case where (\ref{Eq:sampling}) holds, we call $\{x_i\}$ a \emph{set of uniqueness} for $\mathscr{G}$.

Sampling of functions with fractal spectrum was first investigated in \cite{HuaStr05a}.  In those papers, the authors consider the class of functions $F$ which are the Fourier transform of functions $f \in L^2(\mu)$.  Here, the measure $\mu$ is a fractal measure that is \emph{spectral}, meaning that the Hilbert space $L^2(\mu)$ possesses an orthonormal basis of exponential functions.  Similar sampling theorems are obtained in \cite{HW17a} without the assumption that the measure is spectral.  In higher dimensions, graph approximations of fractals (such as the Sierpinski gasket) are often considered; sampling of functions on such graphs has been considered in \cite{ObeStrStr03a,Wey16a}.

Our main results in the paper concerning sampling include the following.  In Theorem \ref{maintheorem} we prove that if $\mathscr{G}$ consists of all CDFs for Cantor sets with unknown scale factor $N$, but the scale factor is known to be bounded by $K$, then there exists a set of uniqueness of size $O(K^3)$.
We show that when the scale factor $N$ is known, there exists a set of uniqueness of size $N-1$ that satisfies the implication in Equation 1.  We conjecture that there is a minimal set of uniqueness of size $\left\lfloor \frac{N}{2}\right\rfloor$, and prove that the minimal set of uniqueness cannot be smaller in Proposition \ref{lessthanN/2}. We also provide evidence of our conjecture by considering a conditional sampling procedure (meaning that the sampling points are data dependent) that can uniquely identify the CDF from $\left\lfloor \frac{N}{2} \right\rfloor$ samples in Theorem \ref{conditional}.
Additionally, in section 2.2, we include an interpolation procedure as an imperfect reconstruction of a CDF from samples, and provide an upper bound on the error that the reconstruction via interpolation could give.

\subsection{Definitions}

Cantor sets, as defined by an iterative process, are naturally described in terms of an iterated function system (IFS). Indeed, the IFS encodes the iterative process that produces the Cantor set.

\begin{defn}[Iterated Function System]
In general, an IFS is a collection of contraction maps on a complete metric space. Then, the Cantor set $C_{\vec{B}}=C_{ N,D}$ for an IFS $\{\phi_{d}\}_{d \in D}$ is the attractor $C$ of the IFS, meaning the unique compact set satisfying
\[C = \bigcup_{d \in D}\phi_{d}(C).\]

We let $\phi_D(A)=\bigcup_{d\in D}\phi_d(A)$ and we will write \[(\phi_D)^n(A)\coloneqq\bigcup_{d_1,...,d_n\in D} \phi_{d_1}\circ...\circ\phi_{d_n}(A).\]

%\[(\phi_D)^n(A)\coloneqq\underbrace{ \bigcup\limits_{d\in D}\phi_d\left(\bigcup_{d\in D}\phi_d\left(...\left(\bigcup\limits_{d\in D}\phi_d(A)\right)...\right)\right)}_{n \hspace{0.1cm} \text{times}}\]
\end{defn}

Let $N$ be the scale factor, and let $D$ be the digit set. For our purpose, we consider the particular IFS $\{\phi_d\}_{d \in D}$ on $\mathbb{R}$ where $\phi_d(x) = \frac{x+d}{N}$ for each $d \in D$. We allow $\phi_D$ to act on $[0,1]$, so the invariant set is a subset of $[0,1]$. Note, $\bigcap_{n=1}^{\infty}(\phi_{D})^n([0,1])=C_{\vec{B}}$.

\begin{theorem}[Hutchinson, \cite{Hut81a}]\label{Hutch}
There exists a \emph{unique} probability measure $\mu_{\vec{B}}$ such that $\mu_{\vec{B}}(C_{\vec{B}})=1$, $\mu_{\vec{B}}([0,1]\backslash C_{\vec{B}})=0$ and $\mu_{\vec{B}}=\frac{1}{\|\vec{B}\|}\sum_{d\in D}\mu_{\vec{B}}\circ\phi_d^{-1}$. That is, $\mu_{\vec{B}}$ is invariant under the iterated function system.
\end{theorem}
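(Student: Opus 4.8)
The plan is to realize $\mu_{\vec B}$ as the unique fixed point of a contraction on a complete metric space of measures, following Hutchinson's original strategy. Let $\mathcal{P}$ denote the set of Borel probability measures on $[0,1]$, and equip it with the \emph{Hutchinson (Kantorovich--Wasserstein) metric}
\[
d(\mu,\nu) \coloneqq \sup\left\{ \left| \int_0^1 f\, d\mu - \int_0^1 f\, d\nu \right| : f\colon [0,1]\to\mathbb{R},\ \mathrm{Lip}(f)\le 1 \right\}.
\]
Since $[0,1]$ has finite diameter, $d$ is finite and is genuinely a metric, and a standard fact is that $(\mathcal{P},d)$ is complete, with $d$ metrizing weak-$*$ convergence; completeness here rests on tightness (automatic on the compact space $[0,1]$) and Prokhorov's theorem. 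I would record these facts first.

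Next, define the transfer (Markov) operator $T\colon\mathcal{P}\to\mathcal{P}$ by
\[
T\mu \coloneqq \frac{1}{\|\vec B\|}\sum_{d\in D}\mu\circ\phi_d^{-1},
\]
which is again a Borel probability measure because each $\phi_d$ is continuous, $\phi_d([0,1])\subseteq[0,1]$, and the weights $1/\|\vec B\|$ sum to $1$ over $d\in D$. The crucial estimate is that $T$ contracts $d$: if $\mathrm{Lip}(f)\le 1$, then $f\circ\phi_d$ has Lipschitz constant at most $1/N$ since $\phi_d(x)=(x+d)/N$, so $N(f\circ\phi_d)$ is $1$-Lipschitz; expanding
\[
\int_0^1 f\, d(T\mu) - \int_0^1 f\, d(T\nu) = \frac{1}{\|\vec B\|}\sum_{d\in D}\left( \int_0^1 f\circ\phi_d\, d\mu - \int_0^1 f\circ\phi_d\, d\nu \right)
\]
and applying the definition of $d$ to each $N(f\circ\phi_d)$ yields
\[
d(T\mu,T\nu)\le \frac{1}{N}\, d(\mu,\nu).
\]
As $N\ge 3>1$, $T$ is a strict contraction, so the Banach fixed point theorem produces a unique $\mu_{\vec B}\in\mathcal{P}$ with $T\mu_{\vec B}=\mu_{\vec B}$; this is precisely the invariance identity $\mu_{\vec B}=\frac{1}{\|\vec B\|}\sum_{d\in D}\mu_{\vec B}\circ\phi_d^{-1}$, and uniqueness among probability measures is immediate from the uniqueness of the fixed point.

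It remains to show $\mu_{\vec B}$ is carried by $C_{\vec B}$. Pick any $\nu\in\mathcal{P}$, e.g.\ $\nu=\delta_0$. Since $T\mu$ is supported in $\phi_D(\mathrm{supp}\,\mu)$, an induction gives that $T^n\nu$ is supported in $(\phi_D)^n([0,1])$; moreover $(\phi_D)^n([0,1])$ is a decreasing sequence of compact sets with $\bigcap_n(\phi_D)^n([0,1])=C_{\vec B}$, as noted in the excerpt. By the contraction property, $T^n\nu\to\mu_{\vec B}$ in $d$, hence weakly. Fixing $m$, for all $n\ge m$ the measure $T^n\nu$ assigns full mass to the closed set $(\phi_D)^m([0,1])$, so the portmanteau theorem gives $\mu_{\vec B}\big((\phi_D)^m([0,1])\big)\ge\limsup_n (T^n\nu)\big((\phi_D)^m([0,1])\big)=1$. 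Letting $m\to\infty$ and using continuity of measure from above, $\mu_{\vec B}(C_{\vec B})=1$, whence $\mu_{\vec B}([0,1]\setminus C_{\vec B})=0$.

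The work is essentially bookkeeping rather than conceptual: the two places needing care are the completeness of $(\mathcal{P},d)$ together with the identification of $d$-convergence with weak convergence, and making sure the Lipschitz rescaling in the contraction estimate is applied to the genuinely $1$-Lipschitz functions $N(f\circ\phi_d)$ with the correct constant $1/N$. Both are standard, so one may alternatively just invoke \cite{Hut81a} for the statement, as the theorem is quoted here verbatim.
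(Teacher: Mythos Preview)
Your proof is correct and follows exactly Hutchinson's original contraction-mapping argument on the Wasserstein space of probability measures. The paper itself gives no proof of this theorem at all---it is stated as a citation of \cite{Hut81a}---so your write-up supplies precisely the argument being invoked, and your closing remark that one may simply cite \cite{Hut81a} is in fact what the authors do.
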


\begin{defn}[Cumulative Distribution Function] \label{D:CDF}
Each Cantor set has a unique cumulative distribution function (CDF) $F: [0,1] \rightarrow [0,1]$ given by
\[F(x)=\mu_{\vec{B}}([0,x])=\int_0^xd\mu_{\vec{B}}.\]
The CDF of $C_{\vec{B}}$ is denoted $F_{\vec{B}}$.
\end{defn}

Note that the CDF of any of our Cantor sets is continuous.  When convenient, we will extend $F_{\vec{B}}$ to all of $\mathbb{R}$ by $F_{\vec{B}}(x) = 0$ if $x < 0$ and $1$ if $x > 1$.

\begin{remark}[Pullback of Lebesgue measure.]
For any subset $A$ of $C_{\vec{B}}$, $\mu_{\vec{B}}(A)=m(F_{\vec{B}}|_{C_{\vec{B}}}(A))$, where $m$ is Lebesgue measure.
\end{remark}

\begin{defn}[Kronecker Product of Binary Digit Vectors]
We recall the Kronecker product of two vectors: Let $\vec{B} = (b_0,b_1,...,b_{M-1})$ and $\vec{C} = (c_0,c_1,...,c_{N-1})$. Then, the \emph{Kronecker product} of $\vec{B}$ with $\vec{C}$, denoted $\vec{B} \otimes \vec{C}$, is defined as
\[ (\vec{B}\otimes \vec{C})(i)=b_{\left\lfloor \frac{i}{N} \right\rfloor}c_{i (mod N)},\] or equivalently, \[
(\vec{B} \otimes \vec{C})(n + mN) = b_m c_n
\]
where $n \in \{0,1,...,N-1\}$ and $m \in \{0,1,...,M-1\}$.  Further, we define the Kronecker product of two CDFs as follows: Let $F_{\vec{B}}$ and $F_{\vec{C}}$ be the CDFs corresponding to the binary digit vectors $\vec{B}$ and $\vec{C}$, respectively. The Kronecker product of $F_{\vec{B}}$ with $F_{\vec{C}}$, denoted $F_{\vec{B}} \otimes F_{\vec{C}}$, is the CDF whose binary digit vector is $\vec{B} \otimes \vec{C}$.
\end{defn}
We can define a Kronecker product on digit sets to retain the association of $\vec{B},\vec{C}$ with $N_1,D_1,N_2,D_2$.
\begin{defn}[Kronecker Product of Digit Sets]
 The \emph{Kronecker product} of two digit sets $D_1$ and $D_2$, denoted $D_1\otimes D_2$, is defined to be the Kronecker product of their associated binary digit vectors, reassociated with digit sets.
 \end{defn}
 
\begin{lemma}
$D_1\otimes D_2=\{c+bN_2 \mid c\in D_2,b\in D_1\}$ where $N_2$ is the scale factor corresponding to $D_2$. The scale factor associated to $D_1\otimes D_2$ is $N_1N_2$.
\end{lemma}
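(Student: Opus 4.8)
The plan is to unwind the chain of definitions connecting digit sets, binary digit vectors, and the Kronecker product. Write $\vec{B} = (b_0, \dots, b_{N_1 - 1})$ for the binary digit vector of $D_1$ (so $b_m = 1 \iff m \in D_1$) and $\vec{C} = (c_0, \dots, c_{N_2 - 1})$ for that of $D_2$. By definition, $D_1 \otimes D_2$ is the digit set whose associated binary digit vector is $\vec{B} \otimes \vec{C}$, which is a vector of length $N_1 N_2$; hence the scale factor associated to $D_1 \otimes D_2$ is automatically $N_1 N_2$, disposing of the second assertion.

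For the first assertion, I would use the second form of the definition of the Kronecker product of vectors, namely $(\vec{B} \otimes \vec{C})(n + m N_2) = b_m c_n$ for $n \in \{0, \dots, N_2 - 1\}$ and $m \in \{0, \dots, N_1 - 1\}$. The one elementary fact needed is that $(m, n) \mapsto n + m N_2$ is a bijection from $\{0, \dots, N_1 - 1\} \times \{0, \dots, N_2 - 1\}$ onto $\{0, \dots, N_1 N_2 - 1\}$, with inverse $i \mapsto (\lfloor i / N_2 \rfloor,\ i \bmod N_2)$; this is just Euclidean division. Granting this, every index $i$ of $\vec{B} \otimes \vec{C}$ has a unique representation $i = n + m N_2$ in the stated ranges, and $i \in D_1 \otimes D_2$ iff $(\vec{B} \otimes \vec{C})(i) = 1$ iff $b_m c_n = 1$. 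Since $b_m, c_n \in \{0,1\}$, the product equals $1$ exactly when $b_m = 1$ and $c_n = 1$, i.e. when $m \in D_1$ and $n \in D_2$. Relabelling $b := m$ and $c := n$ yields precisely $D_1 \otimes D_2 = \{ c + b N_2 \mid c \in D_2,\ b \in D_1 \}$.

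I do not anticipate a genuine obstacle here: the only point requiring (minor) care is confirming that the representation $i = n + m N_2$ is well-defined and unique over the stated index ranges, so that the membership equivalences above are unambiguous. Everything else is a direct substitution into the definitions of the Kronecker product of vectors and of digit sets.
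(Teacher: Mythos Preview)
Your proposal is correct: it is precisely the direct unwinding of the definitions of $\vec{B}\otimes\vec{C}$ and of the Kronecker product of digit sets, together with the elementary fact that Euclidean division by $N_2$ gives a bijection $\{0,\dots,N_1-1\}\times\{0,\dots,N_2-1\}\to\{0,\dots,N_1N_2-1\}$. The paper states this lemma without proof, so there is nothing further to compare; your argument is exactly the intended (and essentially only) one.
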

 
\begin{defn}
 $\vec{B}^{\otimes n}\coloneqq\overbrace{\vec{B}\otimes \vec{B}\otimes...\otimes \vec{B}}^{n\hspace{0.1cm} \text{times}}$. For example, $\vec{B}^{\otimes 1}=\vec{B}$, and $\vec{B}^{\otimes 2}=\vec{B}\otimes \vec{B}$.
 \end{defn}
\begin{defn}[Cumulative Digit Function]
Let $\vec{B}=(b_0,...,b_{N-1})$. Define $g:\{0,...,N\}\rightarrow \{0,...,\|\vec{B}\|\}$ to be the \emph{cumulative digit function} where $g(0)=0$ and $g(i)\coloneqq \sum\limits_{j=0}^{i-1}b_j\,\forall i\in\{1,...,N\}$.
\end{defn}

 Here, we describe an algorithm for approximating the CDF of a Cantor set. To be precise, we recursively define a sequence of piecewise linear functions $\{f_n\}$ which converges uniformly to the desired CDF. 
 
 \begin{defn}[Piecewise Approximations of CDFs]
  Let $C_{\vec{B}}$ be a Cantor set with cumulative digit function $g_{\vec{B}}$. Define $F_{\vec{B}}^{(1)}$ as the linear interpolation of the points
 \[S_1=
 \left\{\left(\frac{i}{N},\frac{g(i)}{\|\vec{B}\|}\right)\,\middle|\,i \in \{0,1,...,N\}\right\}.
 \]Let \[S_n=\left\{\left(\sum_{i=1}^n\frac{a_i}{N^i},\sum_{i=1}^n\frac{g(a_i)}{\|\vec{B}\|^i}\right)\,\middle|\,0\leq a_1\leq N,\, 0\leq a_i\leq N-1\, \forall i\in \{2,...,n\}\right\}\] and define $F_{\vec{B}}^{(n)}$ to be the linear interpolation of $S_n$.
 \end{defn}
It can be shown that
\[F_{\vec{B}}(x)=\lim_{n\to\infty}F_{\vec{B}}^{(n)}(x)\]
where the limit converges uniformly on $[0,1]$.

\begin{defn}[Multiplicative Dependence]
Two integers $r$ and $s$ are \emph{multiplicatively dependent}, denoted by $r\sim s$, if there exist integers $m$ and $n$ not both zero such that $r^m = s^n$. Else, if no such integers exist, then $r$ and $s$ are \emph{multiplicatively independent}, denoted by $r\not\sim s$. We note that $\sim$ is not an equivalence relation, e.g. $2^0 = 1^1 = 3^0$.
\end{defn}

We denote the exponential function $e^{2 \pi i x}$ by $e(x)$.

\section{Main Results}
\subsection{Preliminary Theorems}
The first Lemma of this section is a very useful invariance identity of the CDF. It is an immediate consequence of Theorem \ref{Hutch}, and we omit the proof.
\begin{lemma}[Invariance Equation]\label{invariance}
For a CDF $F_{\vec{B}}$ with scale factor $N$
\begin{equation}\label{E: invariance}
F_{\vec{B}}(x)=\sum_{n=0}^{N-1}\frac{b_n}{\|\vec{B}\|}F_{\vec{B}}(N x-n)\end{equation}
where we regard $F(x)=0$ for all $x\leq 0$ and $F(x)=1$ for all $x\geq 1$.
\end{lemma}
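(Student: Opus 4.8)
The plan is to derive the identity directly from Hutchinson's invariance relation $\mu_{\vec{B}} = \frac{1}{\|\vec{B}\|}\sum_{d \in D} \mu_{\vec{B}} \circ \phi_d^{-1}$ of Theorem \ref{Hutch}, evaluated on the interval $[0,x]$. First I would write
\[
F_{\vec{B}}(x) = \mu_{\vec{B}}([0,x]) = \frac{1}{\|\vec{B}\|}\sum_{d \in D} \mu_{\vec{B}}\bigl(\phi_d^{-1}([0,x])\bigr),
\]
so the task reduces to identifying each term $\mu_{\vec{B}}(\phi_d^{-1}([0,x]))$ with $F_{\vec{B}}(Nx - d)$.

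Next, since $\phi_d(y) = \frac{y+d}{N}$ is an increasing bijection of $\mathbb{R}$, I would compute $\phi_d^{-1}([0,x]) = [-d,\, Nx - d]$ explicitly. Because $\mu_{\vec{B}}$ is a probability measure with $\mu_{\vec{B}}([0,1]) = 1$ and $\mu_{\vec{B}}(\mathbb{R}\setminus[0,1]) = 0$, one has $\mu_{\vec{B}}([-d,\, Nx - d]) = \mu_{\vec{B}}\bigl([0,1] \cap [0,\, Nx - d]\bigr)$. I would then split into cases according to the position of $Nx - d$ relative to $[0,1]$: if $Nx - d \leq 0$ the measure is $0$; if $Nx - d \geq 1$ it equals $\mu_{\vec{B}}([0,1]) = 1$; and if $0 < Nx - d < 1$ it equals $F_{\vec{B}}(Nx - d)$ by Definition \ref{D:CDF}. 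Each outcome agrees with $F_{\vec{B}}(Nx - d)$ under the stated convention that $F$ is extended by $0$ to the left of $0$ and by $1$ to the right of $1$. Substituting back and rewriting $\sum_{d \in D}$ as $\sum_{n=0}^{N-1} b_n$, using $b_n = 1 \iff n \in D$, yields the claimed equation.

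The only point requiring care — the ``main obstacle,'' though it is a minor one — is the treatment of interval endpoints: one must note that $\mu_{\vec{B}}$ is non-atomic (equivalently, $F_{\vec{B}}$ is continuous, as observed after Definition \ref{D:CDF}), so that replacing closed intervals by half-open or open ones does not affect the measure, and that the boundary values $Nx - d \in \{0,1\}$ are consistent with the extension convention. Everything else is a direct substitution, which is why the proof may be safely omitted in the main text.
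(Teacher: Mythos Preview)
Your proposal is correct and follows essentially the same route as the paper: both apply Hutchinson's invariance $\mu_{\vec{B}} = \frac{1}{\|\vec{B}\|}\sum_{d\in D}\mu_{\vec{B}}\circ\phi_d^{-1}$ to $[0,x]$, compute $\phi_d^{-1}([0,x]) = [-d,\,Nx-d]$ (the paper phrases this as a change of variables in the integral), use that $\mu_{\vec{B}}$ is supported on $[0,1]$ to discard the portion left of $0$, and then rewrite the sum over $D$ as $\sum_{n=0}^{N-1} b_n$. Your explicit remark on non-atomicity at the endpoints is a small clarification the paper leaves implicit.
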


\begin{proof}
This follows nearly immediately from Theorem \ref{Hutch}, however, we present the proof anyway. Observe,
\[\begin{split}
    F_{\vec{B}}(x) &= \int_0^x d\mu_{\vec{B}}=\int_0^x d\sum_{d\in D}\frac{1}{\|D\|}\mu_{\vec{B}}\circ\phi_d^{-1}=\sum_{d\in D}\frac{1}{\|D\|}\int_0^x d\mu_{\vec{B}}\circ\phi_d^{-1}.
\end{split}\]
Hence under a change-of-variables
\[\begin{split}
   F_{\vec{B}}(x) &=\sum_{d\in D}\frac{1}{\|D\|}\int_{\phi_d^{-1}(0)}^{\phi_d^{-1}(x)}d\mu_{\vec{B}}\circ\phi_d^{-1}\circ\phi_d=\sum_{d \in D}\frac{1}{\|D\|}\int_{-d}^{Nx-d}d\mu_{\vec{B}}\\
    &=\sum_{d \in D}\frac{1}{\|D\|}\left(\int_{-d}^0d\mu_{\vec{B}}+\int_{0}^{Nx-d}d\mu_{\vec{B}}\right)=\sum_{d \in D}\frac{1}{\|D\|}\int_{0}^{Nx-d}d\mu_{\vec{B}}.
\end{split}\]
Finally, since $\|\vec{B}\|=\|D\|$, $D\subset\{0,1,...,N-1\}$, and $b_n=1$ for $n\in D$ and $b_n=0$ for $n\not\in D$,
\[\begin{split}
    F_{\vec{B}}(x) &= \sum_{d\in D}\frac{1}{\|\vec{B}\|}\int_{0}^{Nx-d}d\mu_{\vec{B}}=\sum_{n=0}^{N-1}\frac{b_n}{\|\vec{B}\|}\int_{0}^{Nx-d}d\mu_{\vec{B}}=\sum_{n=0}^{N-1}\frac{b_n}{\|\vec{B}\|}F_{\vec{B}}(Nx-d).
\end{split}\]
\end{proof}

\begin{lemma}
  $F_{\vec{B}}\left(\frac{k}{N}\right)=\frac{g(k)}{\|\vec{B}\|}$ for $k\in\{0,...,N\}$, where $g$ is the cumulative digit function.
\label{fixedpoints}
\end{lemma}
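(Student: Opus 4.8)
The plan is to evaluate the invariance equation of Lemma \ref{invariance} at the single point $x = k/N$ and exploit the boundary convention that $F_{\vec{B}}(y) = 0$ for $y \le 0$ and $F_{\vec{B}}(y) = 1$ for $y \ge 1$. Substituting $x = k/N$ into \eqref{E: invariance} yields $F_{\vec{B}}(k/N) = \sum_{n=0}^{N-1} \frac{b_n}{\|\vec{B}\|} F_{\vec{B}}(k-n)$, and the key observation is that every argument $k-n$ is now an integer, so each term $F_{\vec{B}}(k-n)$ is either $0$ or $1$: it equals $1$ precisely when $k - n \ge 1$, i.e. when $n \in \{0, 1, \dots, k-1\}$, and it equals $0$ when $n \in \{k, \dots, N-1\}$, where for $n = k$ we use $F_{\vec{B}}(0) = \mu_{\vec{B}}(\{0\}) = 0$, which holds by the continuity of the CDF noted after Definition \ref{D:CDF}.

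Consequently the sum collapses to $F_{\vec{B}}(k/N) = \frac{1}{\|\vec{B}\|}\sum_{n=0}^{k-1} b_n = \frac{g(k)}{\|\vec{B}\|}$, which is exactly the claimed identity. The edge cases are covered automatically: for $k = 0$ the sum is empty and both sides equal $0$, and for $k = N$ we have $g(N) = \sum_{j=0}^{N-1} b_j = \|\vec{B}\|$ so both sides equal $1$.

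As an alternative derivation worth recording, one can argue directly from Theorem \ref{Hutch}. Since $\phi_d([0,1]) = [d/N, (d+1)/N]$, the pushforward measures $\mu_{\vec{B}} \circ \phi_d^{-1}$ for $d \in D$ are supported on these pairwise almost-disjoint subintervals, so the invariance identity $\mu_{\vec{B}} = \frac{1}{\|\vec{B}\|}\sum_{d \in D} \mu_{\vec{B}} \circ \phi_d^{-1}$ forces $\mu_{\vec{B}}([n/N, (n+1)/N]) = b_n/\|\vec{B}\|$ for every $n \in \{0,\dots,N-1\}$. Summing these masses over $n = 0, \dots, k-1$ and invoking non-atomicity of $\mu_{\vec{B}}$ to discard the shared endpoints gives $F_{\vec{B}}(k/N) = \mu_{\vec{B}}([0,k/N]) = \sum_{n=0}^{k-1} b_n/\|\vec{B}\| = g(k)/\|\vec{B}\|$.

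There is no genuinely hard step here; the only point requiring (minor) care is the handling of the integer arguments $k - n \in \{0, \dots, k\}$ at the boundary — in particular confirming that the origin carries no atom so that $F_{\vec{B}}(0) = 0$ — and this is immediate from the continuity of $F_{\vec{B}}$.
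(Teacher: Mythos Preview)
Your primary argument is correct and is exactly the paper's proof: substitute $x=k/N$ into the invariance equation of Lemma~\ref{invariance}, use the boundary convention to reduce each $F_{\vec{B}}(k-n)$ to $0$ or $1$, and collapse the sum to $\sum_{n=0}^{k-1} b_n/\|\vec{B}\| = g(k)/\|\vec{B}\|$. The alternative measure-theoretic derivation you record is a fine supplement but not needed.
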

\begin{proof} Let $\vec{B}=(b_0,...,b_{N-1})$ be the binary digit vector for $F_{\vec{B}}$. Then by the invariance equation \ref{E: invariance},

    \[F_{\vec{B}}\left(\frac{k}{N}\right)=\sum\limits_{n=0}^{N-1}\frac{b_n}{\|\vec{B}\|}F_{\vec{B}}(k-n)=\sum\limits_{n=0}^{k-1}\frac{b_n}{\|\vec{B}\|}=\frac{g(k)}{\|\vec{B}\|}.\]
\end{proof}

\begin{proposition} 
A function $g:\{0,...,N\}\rightarrow \{0,...,d\}$ is a cumulative digit function for some valid CDF if and only if the following criteria are met.
\begin{enumerate}[nolistsep]
\item $g(0)=0$
\item $g(N)=d$, for some $d\in\{2,...,N-1\}$
\item $0\leq g(k+1)-g(k)\leq 1$ for all $k\in\{0,...,N-1\}$.
\end{enumerate}
Moreover, if $g$ satisfies conditions (1), (2), and (3), then the corresponding CDF $C_{\vec{B}}$ has binary representation $\vec{B}=(b_0,...,b_{N-1})$ such that $b_k=1$ if and only if $g(k+1)-g(k)=1$ and $\|\vec{B}\|=d$.
\end{proposition}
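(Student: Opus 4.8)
The plan is to prove both implications purely by unwinding definitions; no analysis is required, only the bookkeeping relating $g$, $\vec{B}$, and the non-degeneracy conventions. First I would treat the forward direction: assume $g$ is the cumulative digit function of a valid Cantor set $C_{\vec{B}}$ with $\vec{B}=(b_0,\dots,b_{N-1})$ and $d=\|\vec{B}\|=|D|$. Condition (1) is the definition $g(0)=0$. For condition (3), the defining formula $g(i)=\sum_{j=0}^{i-1}b_j$ gives $g(k+1)-g(k)=b_k\in\{0,1\}$ immediately. For condition (2), $g(N)=\sum_{j=0}^{N-1}b_j=\|\vec{B}\|=d$; and since the introduction excludes degenerate sets, we have $\|\vec{B}\|\notin\{0,1,N\}$ (the empty set, a one-point set, and $[0,1]$, respectively) and $N\ge 3$, so $d\in\{2,\dots,N-1\}$.

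Next I would handle the converse together with the ``moreover'' clause. Given $g$ satisfying (1)--(3), define $b_k\coloneqq g(k+1)-g(k)$ for $k\in\{0,\dots,N-1\}$. By (3) each $b_k\in\{0,1\}$, so $\vec{B}\coloneqq(b_0,\dots,b_{N-1})\in(\mathbb{Z}_2)^N$ is a legitimate binary digit vector with digit set $D=\{k:b_k=1\}$ and scale factor $N$. A telescoping sum gives $\|\vec{B}\|=\sum_{k=0}^{N-1}b_k=g(N)-g(0)=d$, using (1) and (2); and since (2) forces $d\in\{2,\dots,N-1\}$ (hence also $N\ge 3$), the set $C_{\vec{B}}$ is neither empty, a single point, nor $[0,1]$, so it is a valid Cantor set with a well-defined CDF $F_{\vec{B}}$. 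Finally I would verify that the cumulative digit function $g_{\vec{B}}$ of this $\vec{B}$ equals $g$: for $i\in\{1,\dots,N\}$, $g_{\vec{B}}(i)=\sum_{j=0}^{i-1}b_j=\sum_{j=0}^{i-1}\bigl(g(j+1)-g(j)\bigr)=g(i)-g(0)=g(i)$ by telescoping and (1), and $g_{\vec{B}}(0)=0=g(0)$. Thus $g$ is the cumulative digit function of a valid CDF, and by construction $b_k=1\iff g(k+1)-g(k)=1$ and $\|\vec{B}\|=d$, which is exactly the ``moreover'' statement.

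I do not expect a genuine obstacle here. The only points needing care are (i) matching the numerical constraint $d\in\{2,\dots,N-1\}$ in condition (2) to the paper's non-degeneracy conventions ($N<3$ disallowed, $\|\vec{B}\|\notin\{0,1,N\}$) in \emph{both} directions, and (ii) noting that condition (3) — not merely monotonicity — is precisely what forces the increments to lie in $\{0,1\}$, so that $\vec{B}$ is genuinely a binary vector rather than an arbitrary nondecreasing step function.
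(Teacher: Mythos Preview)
Your proposal is correct and follows essentially the same approach as the paper's proof: both directions proceed by direct computation with the increments $g(k+1)-g(k)=b_k$, and your telescoping argument for the converse is just a repackaging of the paper's induction. If anything, you are slightly more explicit than the paper about matching the non-degeneracy constraint $d\in\{2,\dots,N-1\}$ in both directions.
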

\begin{proof}
$(\Rightarrow)$ Let $g$ be the cumulative digit function for $C_{\vec{B}}$. The first condition follows directly from the definition of $g$. Also, $g(N)=\sum\limits_{j=0}^{N-1}b_j=d$ so the second condition holds.  By definition of $g$, $g(i)=\sum\limits_{j=0}^{i-1}b_j\leq \sum\limits_{j=0}^{i}b_j=g(i+1)$, so $0\leq g(i+1)-g(i)$\\
Finally, $g(i)+1=\sum\limits_{j=0}^{i-1}b_j+1\geq \sum\limits_{j=0}^{i}b_j=g(i+1)$ implies the third condition.

$(\Leftarrow)$ Construct a CDF with the binary representation $\vec{B}=(b_0,...,b_{N-1})$ such that  $b_k=1$ if and only if $g(k+1)-g(k)=1$. By the second and third conditions, at least two $b_i$ will be 1, and this is a valid CDF.\\
By the third condition and the range of $g$, either $g(k+1)-g(k)=1$ and $b_k=1$ or $g(k+1)-g(k)=0$ and $b_k=0$. By the first condition, $g(0)=0$. For induction, suppose that for $0\leq i\leq N-1$, $g(i)=\sum_{k=0}^{i-1}b_k$. Then, $g(i+1)-g(i)=1$ if and only if $b_i=1$. Therefore, $g(i+1)=g(i)+1=\sum_{k=0}^{i-1}b_k+1$ if and only if $b_i$ is 1. Then, $g(i+1)=\sum_{k=0}^{i}b_k$. By induction, it follows $g$ is the cumulative digit function of $\vec{B}$ by definition.

\end{proof}

\subsubsection{Kronecker Product Results}

We define $\phi_{D_1}\circ\phi_{D_2}(A)=\bigcup_{d\in D_1}\phi_{d}\big(\bigcup_{d'\in D_2}\phi_{d'}(A)\big)$.

\begin{proposition}
 Consider Cantor sets $C_{\vec{B}_1}$ and $C_{\vec{B}_2}$ such that the scale factor and digit set for $\vec{B}_i$ are $N_i,D_i$. Then
$\phi_{D_1}\circ\phi_{D_2}=\phi_{D_1\otimes D_2}$
\end{proposition}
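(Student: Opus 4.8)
The plan is to verify the identity by directly computing the composition $\phi_{d}\circ\phi_{d'}$ for $d \in D_1$, $d' \in D_2$, using the explicit form $\phi_d(x) = \frac{x+d}{N}$ of the contraction maps, and then matching the resulting affine map to a single-generation map of the IFS associated to $D_1 \otimes D_2$. First I would record that the scale factor attached to $D_1 \otimes D_2$ is $N_1 N_2$ (this is the Lemma immediately preceding the proposition), so that the generators of the IFS for $C_{\vec{B}_1 \otimes \vec{B}_2}$ are the maps $x \mapsto \frac{x + e}{N_1 N_2}$ for $e \in D_1 \otimes D_2 = \{c + b N_2 \mid c \in D_2,\ b \in D_1\}$.

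Next I would carry out the composition. For $d \in D_1$ (using scale factor $N_1$) and $d' \in D_2$ (using scale factor $N_2$),
\[
\phi_d\big(\phi_{d'}(x)\big) = \phi_d\!\left(\frac{x + d'}{N_2}\right) = \frac{\frac{x+d'}{N_2} + d}{N_1} = \frac{x + d' + d N_2}{N_1 N_2}.
\]
Thus $\phi_d \circ \phi_{d'}$ is exactly the generator $x \mapsto \frac{x + e}{N_1 N_2}$ with $e = d' + d N_2$. By the Lemma, as $d'$ ranges over $D_2$ and $d$ over $D_1$, the value $e = d' + d N_2$ ranges over precisely $D_1 \otimes D_2$, and moreover this correspondence is a bijection (the pair $(d, d')$ is recovered from $e$ via $d = \lfloor e / N_2 \rfloor$ and $d' = e \bmod N_2$, since $d' \in \{0, \dots, N_2 - 1\}$). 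Therefore
\[
\phi_{D_1}\circ\phi_{D_2}(A) = \bigcup_{d \in D_1}\bigcup_{d' \in D_2} \phi_d\circ\phi_{d'}(A) = \bigcup_{e \in D_1 \otimes D_2} \phi_e^{(N_1 N_2)}(A) = \phi_{D_1 \otimes D_2}(A),
\]
where $\phi_e^{(N_1 N_2)}$ denotes the generator with scale factor $N_1 N_2$, which is exactly how $\phi_{D_1 \otimes D_2}$ is defined. This holds for every set $A$, giving the claimed equality of set maps.

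The computation itself is entirely routine; the only point requiring a moment of care is the bookkeeping of \emph{which} scale factor is attached to each map, since $\phi_d$ and $\phi_{d'}$ are defined relative to $N_1$ and $N_2$ respectively while $\phi_e$ on the right-hand side is relative to $N_1 N_2$. I expect this is the main (mild) obstacle: making sure the notation $\phi_D$ is read with the correct scale factor in each occurrence, and invoking the preceding Lemma both for the description $D_1 \otimes D_2 = \{c + bN_2 \mid c \in D_2, b \in D_1\}$ and for the scale factor $N_1 N_2$, so that the final union is literally the definition of $\phi_{D_1 \otimes D_2}$.
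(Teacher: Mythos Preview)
Your proof is correct and follows essentially the same approach as the paper: both compute the composition $\phi_d\circ\phi_{d'}$ explicitly to obtain $\frac{x+d'+dN_2}{N_1N_2}$ and then identify the resulting family of maps with the IFS for $D_1\otimes D_2$ at scale factor $N_1N_2$. Your version is slightly more detailed in tracking the scale factors and the bijection $(d,d')\leftrightarrow e$, but the argument is the same.
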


\begin{proof}
First, $y\in\phi_{D_1}\circ\phi_{D_2}\left([0,1]\right)$ if and only if there exists $x\in[0,1]$ such that $y=\phi_{D_1}\circ\phi_{D_2}(x)$. This occurs if and only if \[y=\frac{\frac{x+\epsilon_2}{N_2}+\epsilon_1}{N_1}=\frac{x+\epsilon_2+\epsilon_1N_2}{N_1N_2}\] for some $\epsilon_1\in D_1$, $\epsilon_2\in D_2$.\\
This is the IFS for scale factor $N_1N_2$ and digit set $D_3=\{\epsilon_2+\epsilon_1N_2 \mid \epsilon_2\in D_2, \epsilon_1\in D_1\}=D_1\otimes D_2$ by definition of the Kronecker product.
\end{proof}
\begin{cor}$(\phi_D)^n=\phi_{D^{\otimes n}}$ for all $n\in \mathbb{Z}^+$.
\label{ISFcor1}
\end{cor}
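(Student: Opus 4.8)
The plan is to induct on $n$, with the preceding Proposition doing all the real work in the inductive step. The base case $n=1$ is immediate: $D^{\otimes 1}=D$ by definition, so $(\phi_D)^1=\phi_D=\phi_{D^{\otimes 1}}$.

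For the inductive step, assume $(\phi_D)^n=\phi_{D^{\otimes n}}$. First I would observe that the definition of $(\phi_D)^{n+1}$ as $\bigcup_{d_1,\dots,d_{n+1}\in D}\phi_{d_1}\circ\cdots\circ\phi_{d_{n+1}}$ regroups as $\phi_D\circ(\phi_D)^n$: fixing the innermost $n$ maps and letting the outermost index range over $D$ produces exactly the set-valued composition $\phi_{D_1}\circ\phi_{D_2}$ introduced just before the Proposition, now with $D_1=D$ and with $(\phi_D)^n$ playing the role of $\phi_{D_2}$. The one thing to check here is that the Proposition, although stated for two genuine IFS's $\phi_{D_1},\phi_{D_2}$, applies with $\phi_{D_2}$ replaced by $(\phi_D)^n$; this is legitimate because the inductive hypothesis identifies $(\phi_D)^n$ with $\phi_{D^{\otimes n}}$, which is itself the IFS with scale factor $N^n$ and digit set $D^{\otimes n}$. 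Applying the Proposition with the digit sets $D$ and $D^{\otimes n}$ then gives $\phi_D\circ\phi_{D^{\otimes n}}=\phi_{D\otimes D^{\otimes n}}$.

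It then remains to identify $D\otimes D^{\otimes n}$ with $D^{\otimes(n+1)}$. This is the only place requiring genuine care: the notation $\vec{B}^{\otimes n}$ denotes an $n$-fold Kronecker product written without parentheses, so to conclude one must invoke associativity of the Kronecker product of digit vectors, $\vec{A}\otimes(\vec{B}\otimes\vec{C})=(\vec{A}\otimes\vec{B})\otimes\vec{C}$, which is a routine verification from the index formula $(\vec{B}\otimes\vec{C})(n+mN)=b_m c_n$ (equivalently, it is inherited from associativity of the Kronecker product of the corresponding $0$--$1$ vectors), and which then descends to digit sets since the Kronecker product of digit sets is defined through the associated binary vectors. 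Granting this, $D\otimes D^{\otimes n}=D^{\otimes(n+1)}$, so $(\phi_D)^{n+1}=\phi_{D^{\otimes(n+1)}}$ and the induction closes.

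I expect the bookkeeping — matching the union-over-$(n+1)$-digits definition with the two-step composition, and pinning down associativity so that $D^{\otimes(n+1)}=D\otimes D^{\otimes n}$ — to be the only part needing attention; the substantive content is entirely contained in the preceding Proposition.
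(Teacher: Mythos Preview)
Your proof is correct and is precisely the intended argument: the paper states this corollary without proof, leaving it as an immediate consequence of the preceding Proposition via the induction you describe. Your attention to the bookkeeping (regrouping the $(n+1)$-fold union as $\phi_D\circ(\phi_D)^n$ and invoking associativity of the Kronecker product) fills in exactly the details the paper suppresses.
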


\begin{cor}
$F_{\vec{B}}=F_{\vec{B}^{\otimes k}}$.
\label{ISFcor3}
\end{cor}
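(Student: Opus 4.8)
The plan is to deduce the equality of CDFs from the uniqueness clause of Theorem \ref{Hutch}: it suffices to show that the measure $\mu_{\vec{B}}$ is invariant under the iterated function system associated to $\vec{B}^{\otimes k}$, since then $\mu_{\vec{B}} = \mu_{\vec{B}^{\otimes k}}$ by uniqueness, and hence
\[ F_{\vec{B}}(x) = \mu_{\vec{B}}([0,x]) = \mu_{\vec{B}^{\otimes k}}([0,x]) = F_{\vec{B}^{\otimes k}}(x) \]
for every $x \in [0,1]$, which is exactly the claim.

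First I would iterate the invariance relation from Theorem \ref{Hutch}. Writing $\mu = \mu_{\vec{B}}$ and substituting $\mu = \frac{1}{\|\vec{B}\|}\sum_{d \in D}\mu\circ\phi_d^{-1}$ into itself $k-1$ times yields
\[ \mu = \frac{1}{\|\vec{B}\|^{k}} \sum_{d_1,\dots,d_k \in D} \mu \circ (\phi_{d_1}\circ\cdots\circ\phi_{d_k})^{-1}. \]
Next I would identify the right-hand side with the defining relation for $\mu_{\vec{B}^{\otimes k}}$. By Corollary \ref{ISFcor1} (equivalently, by iterating the Kronecker-product proposition together with associativity of $\otimes$), the compositions $\phi_{d_1}\circ\cdots\circ\phi_{d_k}$, as $(d_1,\dots,d_k)$ ranges over $D^k$, are precisely the contraction maps $\phi_e$, $e \in D^{\otimes k}$, of the IFS with scale factor $N^k$ whose attractor is $C_{\vec{B}^{\otimes k}}$, and this correspondence is a bijection. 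Moreover $\|\vec{B}^{\otimes k}\| = \|\vec{B}\|^{k}$, since the Kronecker product multiplies the number of nonzero entries. Hence the displayed identity reads
\[ \mu = \frac{1}{\|\vec{B}^{\otimes k}\|} \sum_{e \in D^{\otimes k}} \mu \circ \phi_e^{-1}, \]
so $\mu$ is a probability measure invariant under the IFS of $\vec{B}^{\otimes k}$; Theorem \ref{Hutch} then forces $\mu = \mu_{\vec{B}^{\otimes k}}$, and the conclusion follows.

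The routine parts here are pure bookkeeping: that the $k$-fold composition indices biject with $D^{\otimes k}$ and that the normalizing constants match up. The only point requiring a little care — and the main (minor) obstacle — is confirming that the map from $k$-tuples over $D$ to elements of $D^{\otimes k}$ is genuinely injective, i.e. that distinct tuples produce distinct affine maps; this is immediate from uniqueness of base-$N$ digit expansions of integers in $\{0,\dots,N^k-1\}$, which is essentially already encoded in the Kronecker-product proposition.
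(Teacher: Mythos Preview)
Your argument is correct. Both you and the paper appeal to the uniqueness half of Hutchinson's theorem, but at different levels. The paper works with the \emph{attractor}: from $\phi_D(C_{\vec{B}})=C_{\vec{B}}$ it gets $(\phi_D)^k(C_{\vec{B}})=C_{\vec{B}}$, rewrites this via Corollary~\ref{ISFcor1} as $\phi_{D^{\otimes k}}(C_{\vec{B}})=C_{\vec{B}}$, and invokes uniqueness of the attractor to conclude $C_{\vec{B}}=C_{\vec{B}^{\otimes k}}$; it then asserts, without further argument, that $F_{\vec{B}}$ is determined by $C_{\vec{B}}$, and so the CDFs agree. You instead iterate the invariance relation for the \emph{measure} $\mu_{\vec{B}}$ and apply uniqueness of the invariant probability measure to obtain $\mu_{\vec{B}}=\mu_{\vec{B}^{\otimes k}}$ directly. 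Your route requires a little more bookkeeping (the bijection $D^k\leftrightarrow D^{\otimes k}$ at the level of individual maps rather than unions, and the matching of normalizing constants), but it is the cleaner argument: the CDF is defined through the measure, not the set, so you reach $F_{\vec{B}}=F_{\vec{B}^{\otimes k}}$ without the paper's unjustified step that equal Cantor sets force equal CDFs.
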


\begin{proof}
Since $F_{\vec{B}}$ is uniquely determined by $C_{\vec{B}}$, and $C_{\vec{B}}$ is uniquely determined by the property that $\phi_D(C_{\vec{B}})=C_{\vec{B}}$, we have that $C_{\vec{B}}=(\phi_D)^n(C_{\vec{B}})=\phi_{D^{\otimes n}}(C_{\vec{B}})$. Hence $C_{\vec{B}}$ satisfies the invariance property of $\phi_{D^{\otimes n}}$. Since $D^{\otimes n}$ was defined to retain its association with $\vec{B}^{\otimes n}$ we have that $F_{\vec{B}}=F_{\vec{B}^{\otimes n}}$.
\end{proof}

%$C_{\bigotimes\limits_kD}=\lim\limits_{n\rightarrow\infty}(\phi_{\bigotimes\limits_kD})^n([0,1])=\lim\limits_{n\rightarrow\infty}((\phi_D)^k)^n([0,1])=\lim\limits_{n\rightarrow\infty}(\phi_D)^{nk}([0,1])=C_D.$

\begin{lemma}
Let $\vec{B}=(b_0,...,b_{M-1})$ be a binary representation with cumulative digit function $g_{\vec{B}}$, $\vec{C}=(c_0,..,c_{N-1})$ be a binary representation with cumulative digit function $g_{\vec{C}}$, and $g_{\vec{B}\otimes \vec{C}}$ be the cumulative digit function for $\vec{B}\otimes \vec{C}$. Then, for $j\in \{0,...,N\},$ $k\in \{0,...,M\},$ $g_{\vec{B}\otimes \vec{C}}(kN+j)=\|\vec{C}\|g_{\vec{B}}(k)+b_kg_{\vec{C}}(j)$.
\label{kroneckerg}
\end{lemma}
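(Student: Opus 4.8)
The plan is to unwind the definition of the cumulative digit function $g_{\vec{B}\otimes\vec{C}}(i)=\sum_{\ell=0}^{i-1}(\vec{B}\otimes\vec{C})(\ell)$ and split the defining sum at the index $kN$, so that the two resulting pieces line up precisely with the two ``coordinates'' of the Kronecker product formula $(\vec{B}\otimes\vec{C})(n+mN)=b_mc_n$ (for $n\in\{0,\dots,N-1\}$, $m\in\{0,\dots,M-1\}$). First I would dispose of the boundary case $k=M$: since $g_{\vec{B}\otimes\vec{C}}$ is defined on $\{0,\dots,MN\}$, having $k=M$ forces $j=0$, and then the left side is $\|\vec{B}\otimes\vec{C}\|=\|\vec{B}\|\,\|\vec{C}\|$ while the right side is $\|\vec{C}\|g_{\vec{B}}(M)+b_M g_{\vec{C}}(0)=\|\vec{C}\|\,\|\vec{B}\|+0$, so the identity holds. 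Thus we may assume $k\in\{0,\dots,M-1\}$ and $j\in\{0,\dots,N\}$.

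For the main case, I would write
\[
g_{\vec{B}\otimes\vec{C}}(kN+j)=\sum_{\ell=0}^{kN-1}(\vec{B}\otimes\vec{C})(\ell)+\sum_{\ell=kN}^{kN+j-1}(\vec{B}\otimes\vec{C})(\ell).
\]
In the first sum, reindex via $\ell=mN+n$ with $m\in\{0,\dots,k-1\}$ and $n\in\{0,\dots,N-1\}$ (a bijection onto $\{0,\dots,kN-1\}$), apply $(\vec{B}\otimes\vec{C})(mN+n)=b_mc_n$, and factor the double sum:
\[
\sum_{m=0}^{k-1}\sum_{n=0}^{N-1}b_mc_n=\Big(\sum_{m=0}^{k-1}b_m\Big)\Big(\sum_{n=0}^{N-1}c_n\Big)=g_{\vec{B}}(k)\,\|\vec{C}\|.
\]
In the second sum, reindex via $\ell=kN+n$ with $n\in\{0,\dots,j-1\}$, use $(\vec{B}\otimes\vec{C})(kN+n)=b_kc_n$ (legitimate since $k\le M-1$), and factor out $b_k$ to get $b_k\sum_{n=0}^{j-1}c_n=b_k\,g_{\vec{C}}(j)$. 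Adding the two pieces yields $g_{\vec{B}\otimes\vec{C}}(kN+j)=\|\vec{C}\|g_{\vec{B}}(k)+b_kg_{\vec{C}}(j)$, as desired.

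As a sanity check to include, I would note the consistency at $j=N$: since $kN+N=(k+1)N+0$, the formula must give the same value computed both ways, and indeed $\|\vec{C}\|g_{\vec{B}}(k)+b_k g_{\vec{C}}(N)=\|\vec{C}\|\big(g_{\vec{B}}(k)+b_k\big)=\|\vec{C}\|g_{\vec{B}}(k+1)$, matching the $j=0$ instance at $k+1$. (Alternatively, one could run a short induction on $k$ for the $j=0$ case and then a short induction on $j$, but the direct sum-splitting above is cleaner.)

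The only real obstacle here is index bookkeeping: one must verify that $\ell\mapsto(m,n)$ with $\ell=mN+n$, $0\le n\le N-1$, is a genuine bijection from $\{0,\dots,kN-1\}$ onto $\{0,\dots,k-1\}\times\{0,\dots,N-1\}$, and one must be careful to invoke the Kronecker identity only for arguments of the form $n+mN$ with $n\le N-1$ and $m\le M-1$ — which is exactly why the $k=M$ corner is handled separately. There is no analytic difficulty; everything reduces to finite arithmetic with the digit vectors.
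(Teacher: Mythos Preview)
Your argument is correct. The paper uses the same decomposition for the base case $j=0$ (splitting the sum $\sum_{\ell=0}^{kN-1}(\vec{B}\otimes\vec{C})(\ell)$ into a double sum over $m,n$ and factoring), but then finishes with an induction on $j$ rather than computing the tail $\sum_{\ell=kN}^{kN+j-1}(\vec{B}\otimes\vec{C})(\ell)$ directly as you do. Your direct computation is a bit cleaner and avoids the case split on $b_k$ that the paper's inductive step requires; you are also more explicit about the boundary $k=M$ (which forces $j=0$ and where $b_M$ is undefined), a point the paper's proof leaves implicit. Either route is straightforward---the content is the same reindexing via $\ell=mN+n$---so this is a minor stylistic variation rather than a genuinely different method.
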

\begin{proof}
The proof follows by induction on $j$.\\
When $j=k=0$, $g_{\vec{B}\otimes\vec{C}}(0)=0=\|\vec{C}\|g_{\vec{B}}(0)$ by definition. When $k\geq 1$, then
\begin{align*}
g_{\vec{B}\otimes \vec{C}}(kN) &= \sum_{i=0}^{kN-1}(\vec{B}\otimes \vec{C})(i)= \sum_{m=0}^{k-1}\sum_{n=0}^{N-1}(\vec{B}\otimes \vec{C})(n + mN) \\
&= \sum_{m=0}^{k-1}\sum_{n=0}^{N-1} b_mc_n= \sum_{n=0}^{N-1}c_n\sum_{m=0}^{k-1}b_m \\
&= \|\vec{C}\|g_{\vec{B}}(k)
\end{align*}
as desired.\\
It follows the identity holds for all $k$ when $j=0$. This serves as the base case for induction on $j$.
Now assume the identity for $j$.
Then,
\begin{align*}
g_{\vec{B} \otimes \vec{C}}(kN + j + 1) &= g_{\vec{B} \otimes \vec{C}}(kN + j) + (\vec{B} \otimes \vec{C})(kN + j) \\
&= \|\vec{C}\|g_{\vec{B}}(k) + b_kg_C(j) + (\vec{B} \otimes \vec{C})(kN + j) \\
&=\|\vec{C}\|g_{\vec{B}}(k)+b_kg_{\vec{C}}(j)+b_kc_{j}.
\end{align*}
It follows, when $b_k=1$,
\[g_{\vec{B} \otimes \vec{C}}(kN + j + 1)=\|\vec{C}\|g_{\vec{B}}(k)+g_{\vec{C}}(j)+c_{j}= \|\vec{C}\|g_{\vec{B}}(k) + b_kg_{\vec{C}}(j + 1).\]
Otherwise, when $b_k=0$,
\[g_{\vec{B} \otimes \vec{C}}(kN + j + 1)= \|\vec{C}\|g_{\vec{B}}(k) =\|\vec{C}\|g_{\vec{B}}(k) + b_kg_{\vec{C}}(j+1).\]
\end{proof}

\begin{proposition}
Let $F_{\vec{B}}$ be a CDF where $\vec{B}=(b_0,b_1,...,b_{N-1})$ with cumulative digit function $g_{\vec{B}}$. \\
Consider $x\in(0,1)$, with $x=\sum\limits_{i=1}^{\infty}\frac{n_i}{N^i}$, $n_i\in\mathbb{Z}_N$.
Then, $F_{\vec{B}}(x)=\sum\limits_{i=1}^{\infty}\left(\prod\limits_{k=1}^{i-1}b_{n_k}\right)\frac{g(n_i)}{\|\vec{B}\|^i}$.
\label{NtoDrel}
\end{proposition}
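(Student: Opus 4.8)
The plan is to prove the identity by induction on the number of base-$N$ digits, using the invariance equation (Lemma \ref{invariance}) to peel off one digit at a time. First I would set up the finite analogue: for $x = \sum_{i=1}^{m} \frac{n_i}{N^i}$ a terminating base-$N$ expansion, I would show
\[
F_{\vec{B}}\Bigl(\sum_{i=1}^{m}\frac{n_i}{N^i}\Bigr) = \sum_{i=1}^{m}\Bigl(\prod_{k=1}^{i-1}b_{n_k}\Bigr)\frac{g(n_i)}{\|\vec{B}\|^i}.
\]
For $m=1$ this is exactly Lemma \ref{fixedpoints} when $n_1 \in \{0,\dots,N\}$, or follows from the invariance equation together with monotonicity when $n_1 \in \{0,\dots,N-1\}$ and there are lower-order terms. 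For the inductive step, writing $x = \frac{n_1}{N} + \frac{1}{N}y$ where $y = \sum_{i=1}^{m-1}\frac{n_{i+1}}{N^i} \in [0,1)$, I would apply Lemma \ref{invariance} to get $F_{\vec{B}}(x) = \sum_{n=0}^{N-1}\frac{b_n}{\|\vec{B}\|}F_{\vec{B}}(Nx - n)$. For $n < n_1$ the argument $Nx-n \geq 1$ so the term contributes $\frac{b_n}{\|\vec{B}\|}$; for $n > n_1$ the argument is negative so the term vanishes; for $n = n_1$ the argument is exactly $y$. This yields $F_{\vec{B}}(x) = \frac{g(n_1)}{\|\vec{B}\|} + \frac{b_{n_1}}{\|\vec{B}\|}F_{\vec{B}}(y)$, and applying the induction hypothesis to $F_{\vec{B}}(y)$ (which has one fewer digit) gives the finite identity after reindexing.

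Next I would pass to the limit. Given $x \in (0,1)$ with expansion $x = \sum_{i=1}^{\infty}\frac{n_i}{N^i}$, let $x_m = \sum_{i=1}^{m}\frac{n_i}{N^i}$ be the $m$-th truncation, so $x_m \nearrow x$ (or $x_m \to x$ from below) and $0 \le x - x_m \le N^{-m}$. Since $F_{\vec{B}}$ is continuous, $F_{\vec{B}}(x_m) \to F_{\vec{B}}(x)$. The finite identity gives $F_{\vec{B}}(x_m)$ as the $m$-th partial sum of the claimed series, and since $0 \le \prod_{k=1}^{i-1}b_{n_k} \le 1$ and $0 \le g(n_i) \le \|\vec{B}\|$, the series $\sum_{i=1}^{\infty}\bigl(\prod_{k=1}^{i-1}b_{n_k}\bigr)\frac{g(n_i)}{\|\vec{B}\|^i}$ is dominated termwise by $\sum_i \|\vec{B}\|^{1-i}$, hence converges absolutely (recall $\|\vec{B}\| \ge 2$). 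Therefore the partial sums converge to the full series, and matching the two limits yields the result.

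The one genuine subtlety — and the step I expect to require the most care — is the non-uniqueness of base-$N$ expansions: a point of the form $\frac{k}{N^m}$ has both a terminating expansion and one ending in repeated $(N-1)$'s, and the formula must be checked to be well-defined (independent of the chosen expansion), or else the statement implicitly restricts to a canonical expansion. I would handle this by verifying directly that replacing a terminal digit $n_m$ by $n_m - 1$ followed by an infinite tail of $(N-1)$'s leaves the sum unchanged: the discrepancy is $\bigl(\prod_{k=1}^{m-1}b_{n_k}\bigr)\bigl[\frac{g(n_m)}{\|\vec{B}\|^m} - \frac{g(n_m-1)}{\|\vec{B}\|^m} - b_{n_m-1}\sum_{i>m}\frac{g(N-1)}{\|\vec{B}\|^i}\cdot(\text{product of }b_{N-1}\text{'s})\bigr]$, and one checks this telescopes to zero using $g(n_m) - g(n_m - 1) = b_{n_m-1}$, $g(N-1) = \|\vec{B}\| - b_{N-1}$, and the geometric series $\sum_{i \ge 1} b_{N-1}^i \|\vec{B}\|^{-i}$ — the key cancellation being $b_{N-1}\bigl(1 + \frac{b_{N-1}}{\|\vec{B}\|} + \cdots\bigr)\frac{g(N-1)}{\|\vec{B}\|} = b_{N-1}$ when $b_{N-1} \in \{0,1\}$, where the case $b_{N-1}=0$ is immediate and $b_{N-1}=1$ gives $\frac{1}{1 - 1/\|\vec{B}\|}\cdot\frac{\|\vec{B}\|-1}{\|\vec{B}\|} = 1$. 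Everything else is bookkeeping; the heart of the argument is the one-digit peel via the invariance equation.
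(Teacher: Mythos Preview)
Your argument is correct and takes a genuinely different route from the paper. The paper's proof passes through the Kronecker-product machinery: it uses Corollary~\ref{ISFcor3} ($F_{\vec{B}}=F_{\vec{B}^{\otimes j}}$) together with Lemma~\ref{kroneckerg} (the formula for $g_{\vec{B}\otimes\vec{C}}$) to rewrite $F_{\vec{B}}\bigl(\sum_{i=1}^{j}n_i/N^i\bigr)$ as $g_{\vec{B}^{\otimes j}}(\cdot)/\|\vec{B}\|^j$ and then unwinds that by induction on $j$. You instead bypass the Kronecker product entirely and peel off one digit directly from the invariance equation, obtaining the recursion $F_{\vec{B}}(x)=\frac{g(n_1)}{\|\vec{B}\|}+\frac{b_{n_1}}{\|\vec{B}\|}F_{\vec{B}}(y)$. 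This is more elementary and self-contained, since it needs only Lemma~\ref{invariance} (and implicitly Lemma~\ref{fixedpoints} for the base case); the paper's version has the advantage of exercising and motivating the Kronecker lemmas it has just developed. Both then finish identically via continuity and partial sums. Your extra paragraph on the non-uniqueness of base-$N$ expansions is a point the paper does not address; your verification that the two expansions of an $N$-adic rational give the same value is correct and is a welcome addition.
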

\begin{proof}
Fix the sequence $\{ n_i \} \subset \mathbb{Z}_N$.
We have, by Corollary \ref{ISFcor3} and Lemma \ref{fixedpoints}, for all $j \in \mathbb{N}$
\[F_{\vec{B}}\left(\sum_{i=1}^j\frac{n_i}{N^i}\right)=F_{ \vec{B}^{\otimes j}}\left(\sum_{i=1}^j\frac{n_i}{N^i}\right)=\frac{g_{ \vec{B}^{\otimes j}}\left(\sum_{i=1}^j N^{j-i}n_i\right)}{\|\vec{B}\|^j}.\]

For an inductive base case, by Proposition \ref{fixedpoints}, \[F_{\vec{B}}\left(\frac{n_1}{N}\right)=\frac{g_{\vec{B}}(n_1)}{\|\vec{B}\|}=\sum_{i=1}^1\left(\prod_{k=1}^{i-1}b_{n_k}\right)\frac{g_{\vec{B}}(n_i)}{\|\vec{B}\|^i}.\]
For induction on $j$, suppose that  \[F_{\vec{B}}\left(\sum_{i=1}^j\frac{n_i}{N^i}\right)=\sum_{i=1}^j\left(\prod_{k=1}^{i-1}b_{n_k}\right)\frac{g_{\vec{B}}(n_i)}{\|\vec{B}\|^i}.\]
Then, with Lemma \ref{kroneckerg} and Lemma \ref{ISFcor3},
\begin{align*}
F_{\vec{B}}\left(\sum_{i=1}^{j+1}\frac{n_i}{N^i}\right)&=F_{\vec{B}^{\otimes j+1}}\left(\sum_{i=1}^{j+1}\frac{n_i}{N^i}\right)=F_{\vec{B}^{\otimes j+1}}\left(\frac{\sum_{i=1}^{j+1}n_iN^{j+1-i}}{N^{j+1}}\right)\\
&=\frac{g_{\vec{B}^{\otimes j+1}}\left(\sum_{i=1}^{j+1}n_i N^{j+1-i}\right)}{\|\vec{B}\|^{j+1}}=\frac{g_{\vec{B}^{\otimes j}\otimes \vec{B}}\left(n_{1}N^{j}+\sum_{i=2}^{j+1}n_i N^{j+1-i}\right)}{\|\vec{B}\|^{j+1}}\\
&=\frac{\|\vec{B}\|^jg_{\vec{B}}(n_1)+b_{n_1}g_{ \vec{B}^{\otimes j}}\left(\sum_{i=2}^{j+1}n_iN^{j+1-i}\right)}{\|\vec{B}\|^{j+1}}=\frac{g_{\vec{B}}(n_1)}{\|\vec{B}\|}+\frac{b_{n_1}}{\|\vec{B}\|}\frac{g_{\vec{B}^{\otimes j}}\left(\sum_{i=2}^{j+1}n_iN^{j+1-i}\right)}{\|\vec{B}\|^j}\\
&=\frac{g_{\vec{B}}(n_1)}{\|\vec{B}\|}+\frac{b_{n_1}}{\|\vec{B}\|}\sum_{i=2}^{j+1}\left(\prod_{k=2}^{i-1}b_{n_k}\right)\frac{g_{\vec{B}}(n_i)}{\|\vec{B}\|^{i-1}}\text{, by shifting indices in the inductive hypothesis}\\
&=\frac{g_{\vec{B}}(n_1)}{\|\vec{B}\|}+\sum_{i=2}^{j+1}\left(\prod_{k=1}^{i-1}b_{n_k}\right)\frac{g_{\vec{B}}(n_i)}{\|\vec{B}\|^{i}}=\sum_{i=1}^{j+1}\left(\prod_{k=1}^{i-1}b_{n_k}\right)\frac{g_{\vec{B}}(n_i)}{\|\vec{B}\|^{i}}.
\end{align*}
Thus, by induction, for all $j$ and $n_i\in\mathbb{Z}_N$
\[
F_{\vec{B}} \left(\sum_{i=1}^j\frac{n_i}{N^i}\right)=\sum_{i=1}^j\left(\prod_{k=1}^{i-1}b_{n_k}\right)\frac{g_{\vec{B}}(n_i)}{\|\vec{B}\|^{i}}.
\]
Next, note all $x\in(0,1)$ have the form $\sum_{i=1}^\infty\frac{n_i}{N^i}$ for some $n_i\in\mathbb{Z}_N$. Since $F_{\vec{B}}$ is a continuous function, \[F_{\vec{B}}(x)=\lim_{j\rightarrow\infty}F_{\vec{B}}\left(\sum_{i=1}^j\frac{n_i}{N^i}\right)=\lim_{j\rightarrow\infty}\sum_{i=1}^{j}\left(\prod_{k=1}^{i-1}b_{n_k}\right)\frac{g_{\vec{B}}(n_i)}{\|\vec{B}\|^{i}}=\sum_{i=1}^{\infty}\left(\prod_{k=1}^{i-1}b_{n_k}\right)\frac{g_{\vec{B}}(n_i)}{\|\vec{B}\|^{i}}.\]
\end{proof}

\subsection{Interpolation}
\begin{proposition}\label{interpol}
Let $\{(x_n,y_n)\}_{n=1}^k \subset (\mathbb{Q}\cap(0,1))\times (\mathbb{Q}\cap(0,1))$, i.e. rational pairs in the unit cube, with $x_m \neq x_n$ for $m \neq n$ and $y_m \geq y_n$ whenever $x_m \geq x_n$.  Then there exists a CDF that interpolates the data $\{(x_n,y_n)\}_{n=1}^k$; i.e. there exists a digit set $\vec{B}$ such that $F_{\vec{B}}(x_n) = y_n$ for all $n$.
\end{proposition}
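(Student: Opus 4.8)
The plan is to reduce the problem to a single well-chosen scale factor $N$ and then exhibit a cumulative digit function $g\colon\{0,\dots,N\}\to\{0,\dots,d\}$ that satisfies the three conditions of the characterization Proposition, while forcing $F_{\vec B}$ to pass through each prescribed point. The key enabling fact is Lemma~\ref{fixedpoints}: if $x_n = k_n/N$ is $N$-adic with denominator exactly $N$, then $F_{\vec B}(x_n) = g(k_n)/\|\vec B\| = g(k_n)/d$. So the strategy is: first pick $N$ so that every $x_n$ can be written as $k_n/N$ with $k_n \in \{1,\dots,N-1\}$, and simultaneously pick a target denominator $d$ so that every $y_n$ can be written as $\ell_n/d$. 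Since the $x_n$ and $y_n$ are rational, writing $x_n = p_n/q_n$ and $y_n = r_n/s_n$ in lowest terms, I would take $N$ to be a large common multiple of the $q_n$ and $d$ a large common multiple of the $s_n$; the freedom to inflate $N$ and $d$ by extra factors will be needed to satisfy condition (3), the unit-step condition on $g$.

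First I would set up the interpolation constraints as a partial assignment: for each $n$, we require $g(k_n) = \ell_n$, where $k_n = N x_n$ and $\ell_n = d y_n$, together with the boundary values $g(0) = 0$ and $g(N) = d$. The monotonicity hypothesis $y_m \ge y_n$ whenever $x_m \ge x_n$ guarantees this partial assignment is itself monotone: reindexing so that $x_1 < x_2 < \dots < x_k$, we get $0 = k_0 < k_1 < \dots < k_k < k_{k+1} = N$ and $0 = \ell_0 \le \ell_1 \le \dots \le \ell_k \le \ell_{k+1} = d$. The remaining task is to \emph{interpolate these integer points by a function $g$ with steps in $\{0,1\}$}. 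On each block $[k_{j}, k_{j+1}]$ we must increase $g$ from $\ell_{j}$ to $\ell_{j+1}$ using $k_{j+1} - k_j$ unit-or-zero steps; this is possible precisely when $0 \le \ell_{j+1} - \ell_j \le k_{j+1} - k_j$. So the real content is choosing the scaling parameters so that on \emph{every} block the increment in the $\ell$'s does not exceed the increment in the $k$'s.

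The main obstacle, then, is the feasibility inequality $\ell_{j+1} - \ell_j \le k_{j+1} - k_j$ on each block. I expect to handle it as follows: replace $N$ by $tN$ and $d$ by $td$ for a large positive integer $t$ (note $x_n = (t k_n)/(tN)$ and $y_n = (t\ell_n)/(td)$ still hold, so this is legitimate and the new $g$ would satisfy $g(tk_n) = t\ell_n$). Under this scaling each block length becomes $t(k_{j+1}-k_j) \ge t$, while each required $\ell$-increment becomes $t(\ell_{j+1}-\ell_j) \le td$; so the inequality $t(\ell_{j+1}-\ell_j) \le t(k_{j+1}-k_j)$ is equivalent to the original, unscaled one and scaling alone does \emph{not} fix a bad block. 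The fix is instead to first choose $N$ itself large relative to $d$: take $N$ to be a common multiple of the $q_n$ that is also $\ge d \cdot \max_j (q/\gcd)$-type bound, concretely large enough that $k_{j+1} - k_j \ge d$ for every consecutive pair; since $k_{j+1} - k_j = N(x_{j+1} - x_j)$ and the gaps $x_{j+1} - x_j$ are fixed positive rationals, this holds once $N \ge d / \min_j (x_{j+1} - x_j)$, and $N$ can always be taken that large within the set of common multiples of the $q_n$. Then $\ell_{j+1} - \ell_j \le d \le k_{j+1} - k_j$ automatically on each block. Finally, having chosen such $N$ and $d$, I define $g$ explicitly block by block — e.g. on $[k_j, k_{j+1}]$ let $g$ rise by $1$ on the first $\ell_{j+1}-\ell_j$ steps and stay constant thereafter — verify conditions (1)–(3) of the Proposition (conditions (1),(2) from the boundary values, (3) by construction, with $d \in \{2,\dots,N-1\}$ arranged by the size of $N$ and the nondegeneracy forced by having at least two distinct interior constraint values or by padding), and invoke the Proposition to obtain a genuine binary vector $\vec B$ with $\|\vec B\| = d$. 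Lemma~\ref{fixedpoints} then gives $F_{\vec B}(x_n) = g(k_n)/d = \ell_n/d = y_n$, completing the proof.
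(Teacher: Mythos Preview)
Your proposal is correct and follows essentially the same route as the paper: put all the $x_n$ over a common denominator $N$ and all the $y_n$ over a common denominator $d$ (the paper's $C$), arrange the feasibility inequality $\ell_{j+1}-\ell_j \le k_{j+1}-k_j$ on each block by inflating $N$, and then fill in the cumulative digit function (equivalently, the binary vector) block by block before invoking Lemma~\ref{fixedpoints}. The paper simply asserts in one line that ``by considering equivalent fractions'' one may take $a_{i+1}-a_i \ge c_{i+1}-c_i + 1$, whereas you actually justify this step; your use of the characterization Proposition in place of writing down $\vec B$ directly is a cosmetic difference only.
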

\begin{proof}
We may assume without loss of generality that $x_1 < x_2 < ... < x_k$. Further, by considering equivalent fractions, we may assume for all $n$, that $x_n = \frac{a_n}{N}$ and $y_n = \frac{c_n}{C}$ where $a_{i+1} - a_i \geq c_{i+1} - c_i + 1$ for $0 \leq i \leq k$ with the following conventions: $a_0 = c_0 = 0$, $a_{k+1} = N$, and $c_{k+1} = C$. We construct the digit set $\vec{B}$ of length $N$ as follows:
\begin{align*}
\vec{B}(a_i) &= \vec{B}(a_i+1) = ... = \vec{B}(a_i+c_{i+1}-c_i-1) = 1 \\
\vec{B}(a_i+c_{i+1}-c_i) &= \vec{B}(a_i+c_{i+1}-c_i+1) = ... = \vec{B}(a_{i+1}-1) = 0.
\end{align*}
Then, we observe the recurrence relation,
\begin{align*}
F_{\vec{B}}(x_1) &= F_{\vec{B}}\left(\frac{a_1}{N}\right) = \frac{c_1}{C} = y_1 \\
F_{\vec{B}}(x_{i+1}) - F_{\vec{B}}(x_i) &= F_{\vec{B}}\left(\frac{a_{i+1}}{N}\right) - F_{\vec{B}}\left(\frac{a_i}{N}\right) = \frac{g_{\vec{B}}(a_{i+1}) - g_{\vec{B}}(a_i)}{C} = \frac{c_{i+1} - c_i}{C} = y_{i + 1} - y_i
\end{align*}
which concludes the proof.

\end{proof}

\begin{remark}
Let $\{(x_n,y_n)\}$ be a finite sampling set of rational pairs in the unit cube satisfying the hypotheses of Proposition \ref{interpol}. We note from the proof of the proposition that interpolation by a CDF is not unique.
\end{remark}

\begin{cor}
Let $\{(x_n,y_n)\}_{n=1}^k \subset (0,1)\times(\mathbb{Q}\cap(0,1))$ with $x_m \neq x_n$ for $m \neq n$ and $y_m \geq y_n$ whenever $x_m \geq x_n$.  Then there exists a CDF that interpolates the data $\{(x_n,y_n)\}_{n=1}^k$; i.e. there exists a digit set $\vec{B}$ such that $F_{\vec{B}}(x_n) = y_n$ for all $n$.
\end{cor}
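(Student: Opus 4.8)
The plan is to reduce the statement to Proposition \ref{interpol} by bracketing each (possibly irrational) abscissa $x_n$ between two rational points and then using monotonicity of the CDF to pin down the value at $x_n$.

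After reindexing we may assume $x_1 < x_2 < \dots < x_k$, so that $y_1 \le y_2 \le \dots \le y_k$. First I would choose rationals $p_1, q_1, \dots, p_k, q_k$ with
\[
0 < p_1 < x_1 < q_1 < p_2 < x_2 < q_2 < \dots < p_k < x_k < q_k < 1 .
\]
This is possible purely by density of $\mathbb{Q}$: each of the open intervals $(0, x_1)$, $(x_n, x_{n+1})$, and $(x_k, 1)$ is nonempty and hence contains infinitely many rationals, so we may pick $q_n$ and $p_{n+1}$ inside $(x_n, x_{n+1})$ with $q_n < p_{n+1}$, and similarly pick $p_1 \in (0,x_1) \cap \mathbb{Q}$ and $q_k \in (x_k, 1) \cap \mathbb{Q}$.

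Next I would apply Proposition \ref{interpol} to the list of $2k$ data points $\{(p_n, y_n)\}_{n=1}^k \cup \{(q_n, y_n)\}_{n=1}^k$. Its abscissas are distinct (indeed strictly increasing as listed above), its ordinates lie in $\mathbb{Q} \cap (0,1)$, and they are non-decreasing as a function of the abscissa: within a bracketed pair the value $y_n$ is simply repeated, and across pairs $p_n, q_n < p_m, q_m$ forces $y_n \le y_m$ because the $y_n$ are sorted. Thus the hypotheses of Proposition \ref{interpol} hold, and we obtain a digit set $\vec{B}$ with $F_{\vec{B}}(p_n) = F_{\vec{B}}(q_n) = y_n$ for every $n$.

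Finally, since $F_{\vec{B}}(x) = \mu_{\vec{B}}([0,x])$ is nondecreasing and $p_n < x_n < q_n$, we get $y_n = F_{\vec{B}}(p_n) \le F_{\vec{B}}(x_n) \le F_{\vec{B}}(q_n) = y_n$, so $F_{\vec{B}}(x_n) = y_n$, as required. I do not anticipate a genuine obstacle: the only points needing care are the existence of the bracketing rationals in the prescribed order (elementary) and the verification that the enlarged data set still satisfies the monotonicity hypothesis of Proposition \ref{interpol} (immediate from the construction). One could alternatively give a direct construction, choosing the scale factor $N$ large enough that each $x_n$ lands strictly inside a first-level removed interval $\bigl(\lfloor Nx_n\rfloor/N,\ (\lfloor Nx_n\rfloor+1)/N\bigr)$ on which $F_{\vec{B}}$ is forced to be constant with value $y_n$; but the reduction above is shorter.
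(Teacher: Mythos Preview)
Your argument is correct and is essentially identical to the paper's own proof: both bracket each $x_n$ by two rational abscissas with common ordinate $y_n$, apply Proposition~\ref{interpol} to the resulting $2k$ rational data points, and then squeeze using monotonicity of $F_{\vec{B}}$. Your write-up is in fact slightly more careful than the paper's, as you explicitly verify the monotonicity hypothesis for the enlarged data set and spell out the sandwich $F_{\vec{B}}(p_n)\le F_{\vec{B}}(x_n)\le F_{\vec{B}}(q_n)$.
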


\begin{proof}
We may assume without loss of generality that $0 < x_1 < x_2 < ... < x_k < 1$. Now select a collection of rational pairs $\{(z_n,w_n)\}_{n=1}^{2k}$ such that $z_1 < x_1$, $x_n < z_{2n} < z_{2n+1} < x_{n+1}$ for $1 \leq n \leq k - 1$, $x_k < z_{2k}$, and $w_{2n-1} = w_{2n} = y_n$ for all $n$. Then, by Proposition \ref{interpol}, there exists a digit set $\vec{B}$ such that $F_{\vec{B}}(z_n) = w_n$ for all $n$ and, in particular, $F_{\vec{B}}(x_n) = y_n$.

\end{proof}

\begin{cor}
Let $\{(x_n,y_n)\}_{n=1}^k$ be a set of samples of a CDF. Then the maximum error in the reconstruction is
\[\max_{n=1,...,k-1}(y_{n+1}-y_{n}).\]
\end{cor}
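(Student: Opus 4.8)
The plan is to read ``error in the reconstruction'' as $\|F-G\|_{\infty}=\sup_{x\in[0,1]}|F(x)-G(x)|$, where $F\in\mathscr{F}$ is the (unknown) sampled CDF and $G\in\mathscr{F}$ is any CDF reproducing the samples — for instance the interpolant supplied by the preceding corollary — and to compute the worst case of this quantity over all pairs $F,G\in\mathscr{F}$ consistent with the data. First I would handle the bookkeeping: relabel so that $x_{1}<x_{2}<\dots<x_{k}$, and, since every CDF in $\mathscr{F}$ satisfies $F(0)=0$ and $F(1)=1$, adjoin the trivial samples $(0,0)$ and $(1,1)$ so that the sample abscissae partition $[0,1]$ into the closed subintervals $[x_{n},x_{n+1}]$. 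The only features of $F$ and $G$ that will be used are that each is nondecreasing (being the CDF of a positive measure) and that each takes the value $y_{n}$ at $x_{n}$.

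Next I would prove the upper bound, which is short. Fix $n$ and $x\in[x_{n},x_{n+1}]$. Monotonicity of $F$ gives $y_{n}=F(x_{n})\le F(x)\le F(x_{n+1})=y_{n+1}$, and the identical inequalities hold for $G$; hence $|F(x)-G(x)|\le y_{n+1}-y_{n}$. Taking the supremum over $x$ on this subinterval and then the maximum over $n$ yields $\|F-G\|_{\infty}\le \max_{n}(y_{n+1}-y_{n})$. With the endpoint convention this is precisely $\max_{n=1,\dots,k-1}(y_{n+1}-y_{n})$ once the relabeling is unwound (and it is $\max\{y_{1},\,y_{2}-y_{1},\dots,\,y_{k}-y_{k-1},\,1-y_{k}\}$ if one declines to treat the endpoints as samples).

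Finally I would establish sharpness, which is the substantive part. Let $m$ achieve the maximal gap $\delta:=y_{m+1}-y_{m}$, choose $x^{\ast}$ strictly between $x_{m}$ and $x_{m+1}$, and (perturbing to nearby rationals, using that the supremum is all that is needed) assume $x_{m},x^{\ast},x_{m+1}$, all the $y_{n}$, and $\delta$ are rational. Using Proposition \ref{interpol} I would construct two Cantor CDFs interpolating all the given data: $G_{1}$, whose defining measure places no mass on $[x_{m},x^{\ast}]$ and then carries the entire mass $\delta$ on $[x^{\ast},x_{m+1}]$, so that $G_{1}(x^{\ast})=y_{m}$; and $G_{2}$, which carries all of $\delta$ on $[x_{m},x^{\ast}]$ and is flat on $[x^{\ast},x_{m+1}]$, so that $G_{2}(x^{\ast})=y_{m+1}$. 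Both are admissible for Proposition \ref{interpol} precisely because Cantor CDFs may be constant on long stretches; concretely one checks that the digit-count inequalities $a_{i+1}-a_{i}\ge c_{i+1}-c_{i}+1$ can be satisfied, which is arranged by taking the common denominator $N$ large enough. Then $|G_{1}(x^{\ast})-G_{2}(x^{\ast})|=\delta$, so setting $F=G_{1}$, $G=G_{2}$ shows the bound is attained (and, retaining possibly irrational original sample points, rationals converging to them give interpolants with error tending to $\delta$). The main obstacle is thus not the bound itself — a one-line monotonicity estimate — but fitting the extremal data into the hypotheses of Proposition \ref{interpol}, i.e. verifying that the two interpolants with the prescribed flat stretches really do exist as CDFs of Cantor sets.
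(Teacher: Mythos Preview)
Your proposal is correct and rests on the same geometric idea as the paper---place all the mass of the maximal-gap subinterval on one side---but it is considerably more complete. The paper's own proof is three lines: it names the maximal gap $[x_1,x_2]$ and asserts the existence of a sequence of interpolating CDFs $F_{\vec B_n}$ with $\lim_{n\to\infty}F_{\vec B_n}\bigl(x_2-\tfrac1n\bigr)=y_1$, leaving the upper bound, the construction of the sequence, and the rationality requirements of Proposition~\ref{interpol} all implicit. You supply the monotonicity argument for the upper bound, build two fixed extremal interpolants directly from Proposition~\ref{interpol} (one flat on $[x_m,x^\ast]$, the other on $[x^\ast,x_{m+1}]$) rather than pushing a sequence to the boundary, and handle the rationality hypothesis by approximation. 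Your version also flags the endpoint-gap ambiguity in the stated maximum, which the paper suppresses. The net effect is the same bound by the same mechanism, but your route is more self-contained.
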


\begin{proof}
Without loss of generality, let $(x_1,y_1),(x_2,y_2)$ be such that
\[(y_2-y_1)=\max_{n=1,...,k-1}(y_{n+1}-y_n).\]
Then there exist a sequence of CDFs $F_{\vec{B}_n}$ such that
\[\lim_{n\to\infty}F_{\vec{B}_n}\Big(x_2-\frac{1}{n}\Big)=y_1.\]
\end{proof}

\subsection{Sampling}

We first show that if we know the scaling factor $N$, then $N-1$ well chosen sample points is enough to reconstruct $F_{\vec{B}}$. 
 
 \begin{lemma}
For $m\in\{0,...,N-1\}$, $F_{\vec{B}}\left(\frac{m+1}{N}\right)=F_{\vec{B}}\left(\frac{m}{N}\right)$ if and only if $b_m=0$.
\label{basicCDF}
\end{lemma}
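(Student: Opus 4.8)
The plan is to apply the invariance equation (Lemma \ref{invariance}) directly at the two points $\frac{m}{N}$ and $\frac{m+1}{N}$ and subtract, much as in the proof of Lemma \ref{fixedpoints}. Indeed, by Lemma \ref{fixedpoints} we already know $F_{\vec{B}}\left(\frac{k}{N}\right) = \frac{g(k)}{\|\vec{B}\|}$ for all $k \in \{0,\dots,N\}$, where $g$ is the cumulative digit function. Hence
\[
F_{\vec{B}}\left(\frac{m+1}{N}\right) - F_{\vec{B}}\left(\frac{m}{N}\right) = \frac{g(m+1) - g(m)}{\|\vec{B}\|} = \frac{b_m}{\|\vec{B}\|},
\]
using $g(m+1) - g(m) = b_m$ straight from the definition of the cumulative digit function.

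From here the equivalence is immediate: since $\|\vec{B}\| \geq 2 > 0$ is a fixed positive integer, the difference $\frac{b_m}{\|\vec{B}\|}$ is zero precisely when $b_m = 0$, and it equals $\frac{1}{\|\vec{B}\|} \neq 0$ when $b_m = 1$. So $F_{\vec{B}}\left(\frac{m+1}{N}\right) = F_{\vec{B}}\left(\frac{m}{N}\right)$ if and only if $b_m = 0$, which is exactly the claim. One could alternatively avoid quoting Lemma \ref{fixedpoints} and instead expand both $F_{\vec{B}}\left(\frac{m+1}{N}\right)$ and $F_{\vec{B}}\left(\frac{m}{N}\right)$ via Equation \eqref{E: invariance}, observing that every term $F_{\vec{B}}(m+1-n)$ and $F_{\vec{B}}(m-n)$ is either $0$ or $1$ according to whether the argument is $\leq 0$ or $\geq 1$, so the two sums differ only in the single term indexed by $n = m$, contributing $\frac{b_m}{\|\vec{B}\|}$; but invoking Lemma \ref{fixedpoints} is cleaner.

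There is essentially no obstacle here — the statement is a one-line corollary of Lemma \ref{fixedpoints}. The only point worth a moment's care is noting that $\|\vec{B}\| \neq 0$ (guaranteed by the non-degeneracy convention that $\|\vec{B}\| \in \{2,\dots,N-1\}$), so that dividing by $\|\vec{B}\|$ and concluding $b_m = 0$ from $\frac{b_m}{\|\vec{B}\|} = 0$ is legitimate. I would write the proof in two or three lines accordingly.
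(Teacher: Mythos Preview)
Your proof is correct and essentially identical to the paper's: both invoke Lemma~\ref{fixedpoints} to compute $F_{\vec{B}}\left(\frac{m+1}{N}\right)-F_{\vec{B}}\left(\frac{m}{N}\right)=\frac{b_m}{\|\vec{B}\|}$ and conclude immediately. The paper's argument is just the condensed version of what you wrote.
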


\begin{proof} Let $\vec{B}=(b_0,...,b_{N-1})$ be the binary digit vector for $F_{\vec{B}}$.
By Lemma \ref{fixedpoints}, $F\left(\frac{m+1}{N}\right)-F\left(\frac{m}{N}\right)=\frac{b_m}{\|\vec{B}\|}$. Then, $F\left(\frac{m+1}{N}\right)=F\left(\frac{m}{N}\right)$ if and only if $b_m=0$.
\end{proof}

\begin{theorem}
Let $F_{\vec{B}}$ be a CDF with $\|\vec{B}\|=N$. Given $\{F_{\vec{B}}(\frac{k}{N})\}_{k=1}^{N-1}$, $\vec{B}$ can be uniquely determined.
\label{uniqueDet}
\end{theorem}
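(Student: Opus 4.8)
The plan is to recover the binary digit vector $\vec{B}=(b_0,\dots,b_{N-1})$ directly from the sample values $\{F_{\vec{B}}(k/N)\}_{k=1}^{N-1}$, together with the endpoint values $F_{\vec{B}}(0)=0$ and $F_{\vec{B}}(1)=1$ which we know a priori. The key observation is Lemma \ref{fixedpoints}, which gives $F_{\vec{B}}(k/N)=g(k)/\|\vec{B}\|$ where $g$ is the cumulative digit function. Since $\|\vec{B}\|=N$ is given as part of the hypothesis, from the sample at $k/N$ we recover $g(k)=N\cdot F_{\vec{B}}(k/N)$ for every $k\in\{1,\dots,N-1\}$, and we already know $g(0)=0$ and $g(N)=\|\vec{B}\|=N$. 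Thus the entire cumulative digit function $g$ is determined by the data.

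Next I would invoke the characterization already established: by the Proposition characterizing cumulative digit functions (and as restated in that Proposition's "Moreover" clause), $b_k=1$ if and only if $g(k+1)-g(k)=1$, and $b_k=0$ if and only if $g(k+1)-g(k)=0$. Concretely, this is Lemma \ref{basicCDF}: $F_{\vec{B}}\!\left(\frac{k+1}{N}\right)=F_{\vec{B}}\!\left(\frac{k}{N}\right)$ iff $b_k=0$. So for each $k\in\{0,\dots,N-1\}$ we compute the consecutive difference $F_{\vec{B}}((k+1)/N)-F_{\vec{B}}(k/N)$ (using the known endpoint values when $k=0$ or $k=N-1$), and set $b_k=1$ precisely when this difference is nonzero (equivalently, equals $1/N$). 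This reconstructs $\vec{B}$ uniquely, which is exactly the claim.

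The only subtlety — and it is minor rather than a genuine obstacle — is bookkeeping at the endpoints: the data set as written is $\{F_{\vec{B}}(k/N)\}_{k=1}^{N-1}$, which has $N-1$ points and does not literally include $k=0$ or $k=N$. But those two values are forced: $F_{\vec{B}}$ is a CDF of a probability measure supported in $[0,1]$, so $F_{\vec{B}}(0)=0$ and $F_{\vec{B}}(1)=1$, consistent with $g(0)=0$, $g(N)=\|\vec{B}\|=N$. I would state this explicitly so the reader sees that $N-1$ samples genuinely suffice — the remaining two values being part of the ambient structure of $\mathscr{F}$ rather than additional sampling data. With all $N+1$ values $g(0),\dots,g(N)$ in hand, the differencing argument determines all $N$ entries of $\vec{B}$, completing the proof. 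I expect the write-up to be short, essentially a one-paragraph argument citing Lemma \ref{fixedpoints} and Lemma \ref{basicCDF}.
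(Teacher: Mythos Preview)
Your proposal is correct and follows essentially the same route as the paper: the paper's proof is a single sentence noting that $F_{\vec{B}}(0)=0$ and $F_{\vec{B}}(1)=1$ are known and then invoking Lemma~\ref{basicCDF}, which is precisely your differencing argument. Your write-up is simply a more detailed unpacking of the same idea; the extra step of explicitly recovering $g(k)$ via Lemma~\ref{fixedpoints} is harmless but not strictly needed once you appeal to Lemma~\ref{basicCDF}.
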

\begin{proof}
Since $F_{\vec{B}}(0)=0$ and $F_{\vec{B}}(1)=1$, this follows from Lemma \ref{basicCDF}.
\end{proof}

\begin{cor}
If $\mathscr{G}_N=\{F_{\vec{B}}:\|\vec{B}\|=N\}$, then $\{\big(\frac{k}{N}\big):k=1,...,N-1\}$ is a set of uniqueness for $\mathscr{G}_N$.
\end{cor}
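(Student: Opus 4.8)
The plan is to read this off directly from Theorem \ref{uniqueDet}, after unwinding the definition of a set of uniqueness. Recall from Equation \eqref{Eq:sampling} that $\{x_i\}_{i\in I}$ is a set of uniqueness for a class $\mathscr{G}$ precisely when any two members of $\mathscr{G}$ that agree at every $x_i$ must coincide. So I would start by fixing $F_{\vec{B}}, F_{\vec{C}} \in \mathscr{G}_N$ with $F_{\vec{B}}\!\left(\frac{k}{N}\right) = F_{\vec{C}}\!\left(\frac{k}{N}\right)$ for all $k \in \{1,\dots,N-1\}$, and the goal is to conclude $F_{\vec{B}} = F_{\vec{C}}$.

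The key step is to apply Theorem \ref{uniqueDet}: the list of sampled values $\{F_{\vec{B}}(\tfrac{k}{N})\}_{k=1}^{N-1}$ uniquely recovers the binary digit vector $\vec{B}$. Concretely, one has $F_{\vec{B}}(0)=0$ and $F_{\vec{B}}(1)=1$ for free (every CDF in $\mathscr{F}$ satisfies this), so the full list $F_{\vec{B}}(\tfrac{0}{N}),\dots,F_{\vec{B}}(\tfrac{N}{N})$ is determined, and then Lemma \ref{basicCDF} (itself a consequence of Lemma \ref{fixedpoints}) says $b_m = 1$ exactly when $F_{\vec{B}}\!\left(\frac{m+1}{N}\right) \neq F_{\vec{B}}\!\left(\frac{m}{N}\right)$. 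Since $F_{\vec{B}}$ and $F_{\vec{C}}$ produce the identical list of sampled values by hypothesis, running this recovery procedure on each yields $\vec{B} = \vec{C}$. Because a Cantor set, and hence its CDF, is determined by its binary digit vector, $\vec{B} = \vec{C}$ forces $F_{\vec{B}} = F_{\vec{C}}$, establishing the implication in Equation \eqref{Eq:sampling} and therefore that $\{\frac{k}{N} : k = 1,\dots,N-1\}$ is a set of uniqueness for $\mathscr{G}_N$.

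I do not anticipate any real obstacle: the mathematical content lives entirely in Theorem \ref{uniqueDet} and the lemmas it rests on, and the corollary is just a reformulation in the vocabulary of sets of uniqueness. The only point meriting a moment's attention is that the sample set omits the endpoints $x = 0$ and $x = 1$; this is harmless precisely because those two values are known a priori for every CDF, so they contribute no information that needs to be sampled.
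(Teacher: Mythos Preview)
Your proposal is correct and matches the paper's approach: the corollary is stated without proof in the paper, as an immediate restatement of Theorem \ref{uniqueDet} in the language of sets of uniqueness, and your argument spells out exactly that deduction. The one observation about the endpoints $0$ and $1$ being known a priori is precisely the content of the paper's one-line proof of Theorem \ref{uniqueDet} itself.
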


We will now consider the case when we do not know the scale factor.
\subsubsection{Motivating a bound on scale factor}
Remark 2.1 and Corollary 2.4 together establish that finite samples will never suffice without some sort of constraint. We contrast this with with Proposition 2.6 below as this shows a lower bound of $O(N)$ points is necessary, where $N$ is the scale factor.
The following proposition shows that to be able to uniquely determine a CDF with a finite number of points, there must be a bound on the scale factor.

\begin{lemma}
Fix an integer $N\geq 4$, and suppose $\{x_n\}_{1\leq n\leq k}\subset[0,1]$ where $0\leq x_{n-1}\leq x_n \leq 1$ for all $n$ and $k<\left\lfloor\frac{N}{2}\right\rfloor$. There exist two distinct CDFs $F_{\vec{B}}$ and $F_{\vec{C}}$, both with scale factor $N$, such that $F_{\vec{B}}(x_n)=F_{\vec{C}}(x_n)$ $\forall n\in \{1,...,k\}$.
\label{lessthanN/2}
\end{lemma}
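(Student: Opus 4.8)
The plan is to construct the two CDFs by hand, exploiting the fact that we have $N-1$ "slots" $b_0, \dots, b_{N-1}$ to fill but only $k < \lfloor N/2 \rfloor$ constraints coming from the sample points. The key structural fact is Proposition \ref{NtoDrel} (together with Lemma \ref{fixedpoints}): the value $F_{\vec{B}}(x)$ at a point depends only on the base-$N$ digits of $x$ and on the vector $\vec{B}$, and in particular $F_{\vec{B}}(k/N)$ depends only on $g_{\vec{B}}(k)$, i.e.\ on how many of $b_0, \dots, b_{k-1}$ equal $1$. So the idea is: choose $\vec{B}$ and $\vec{C}$ so that they agree on all partial sums $g(\lceil N x_n \rceil)$ and similar data relevant to the given $x_n$, but differ somewhere.

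First I would reduce to the case where all sample points are of the form $j/N$. If $x_n$ is not of this form, replace it by the two rationals $\lfloor N x_n\rfloor/N$ and $\lceil N x_n\rceil/N$ bracketing it; more carefully, since $F_{\vec{B}}$ is monotone, if $F_{\vec{B}}$ and $F_{\vec{C}}$ agree at $\lfloor N x_n\rfloor/N$ and $\lceil N x_n\rceil/N$ \emph{and} the function is actually determined to be constant on that subinterval for both (which happens when the relevant digit is $0$), they agree at $x_n$. Alternatively, and more cleanly, I would pass to scale factor $N^2$ or a higher power via Corollary \ref{ISFcor3} so that each $x_n$ can be approximated well enough — but the simplest route is: it suffices to prove the statement when each $x_n \in \{1/N, \dots, (N-1)/N\}$ and then handle general $x_n$ by a density/refinement argument, noting that replacing $N$ by $N^j$ keeps $k < \lfloor N^j/2\rfloor$. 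Let me instead commit to the direct approach: given the $x_n$, let $m_n = \lfloor N x_n \rfloor$, so we get at most $k$ integers $0 \le m_1 \le \dots \le m_k \le N-1$. We want $\vec{B}, \vec{C}$ agreeing on enough digit-data to force equality at each $x_n$.

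The heart of the construction: choose $\vec{B}$ and $\vec{C}$ to be identical except on two adjacent coordinates, say $b_i b_{i+1} = 10$ versus $c_i c_{i+1} = 01$, placed at an index $i$ such that no $m_n$ equals $i+1$ — possible because there are $N-1$ candidate positions for the pair and only $k < \lfloor N/2\rfloor$ forbidden ones (each $m_n$ forbids at most one or two positions; a counting argument shows a free position remains). Swapping $10 \leftrightarrow 01$ preserves every partial sum $g(j)$ for $j \le i$ and $j \ge i+2$, and changes only $g(i+1)$. It also preserves $\|\vec{B}\| = \|\vec{C}\|$ and the property that at least two digits are $1$, so both are genuine CDFs. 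Then by Proposition \ref{NtoDrel}, $F_{\vec{B}}(x_n) = F_{\vec{C}}(x_n)$ for all $n$: expanding $x_n$ in base $N$, the first digit is some $n_1 \le m_n$ or the relevant truncations only ever call $g$ at arguments $\ne i+1$ and call $b_{n_k}$ at indices where $\vec{B}$ and $\vec{C}$ agree — I need to verify the digit $i$ and $i+1$ never appear as a "used" digit $n_k$ in the expansion of any $x_n$ in a way that matters, which is where the choice of the free position must be made carefully (it should avoid both $i$ and $i+1$ being hit by the base-$N$ digits of the $x_n$, and avoid $m_n \in \{i, i+1\}$).

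The main obstacle I anticipate is precisely this bookkeeping: a single sample point $x_n$ constrains not just one partial sum but, through its full base-$N$ expansion, potentially the values $b_{n_1}, b_{n_2}, \dots$ and $g(n_1), g(n_2), \dots$. So I should either (a) restrict to sample points with terminating base-$N$ expansions of bounded length and count how many digit-positions are "touched" — showing $k$ points touch fewer than $N-1$ positions so a free swap-location survives — or (b) handle non-terminating $x_n$ by first perturbing each to a nearby point of the form $j/N^L$ with $F_{\vec B}$ locally constant there, reducing to the terminating case with scale factor $N^L$, where the bound $k < \lfloor N/2 \rfloor \le \lfloor N^L/2\rfloor$ still holds. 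I expect the cleanest writeup proves it first for $x_n \in \{j/N\}$ using Lemma \ref{basicCDF} and the $10\leftrightarrow 01$ swap, then bootstraps. Getting the counting bound $k < \lfloor N/2\rfloor$ to exactly suffice — rather than, say, $k < N/3$ — will require choosing the swap to occur at a pair $(i,i+1)$ and observing each sample point eliminates at most two of the $\lfloor N/2 \rfloor$-ish disjoint candidate pairs, which is the delicate pigeonhole step.
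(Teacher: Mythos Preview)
Your $10\leftrightarrow 01$ swap at adjacent positions $i,i+1$ is exactly the mechanism the paper uses, and your pigeonhole instinct is right. But there is a real gap in how you propose to control $F_{\vec B}(x_n)$ at \emph{arbitrary} $x_n$. You correctly observe, via Proposition~\ref{NtoDrel}, that $F_{\vec B}(x_n)$ depends on the whole base-$N$ expansion of $x_n$, and you then try to arrange that the digits $i$ and $i+1$ are ``never hit'' by those expansions. For irrational $x_n$ (or even for rationals with long periods) this is hopeless: the expansion may use every digit infinitely often, so no choice of $i$ avoids them. Your fallback of passing to scale factor $N^L$ does not rescue this, because the statement demands that $F_{\vec B}$ and $F_{\vec C}$ have scale factor \emph{exactly} $N$; producing CDFs with scale factor $N^L$ proves a different (and weaker) claim. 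The bookkeeping you flag as ``the delicate pigeonhole step'' therefore cannot be completed as written.

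The missing idea is not to avoid the digits of the $x_n$, but to make the CDFs \emph{constant} wherever the $x_n$ sit. The paper does this by taking $\vec B$ and $\vec C$ to be almost entirely zero: set $b_i=0,\,b_{i+1}=1,\,c_i=1,\,c_{i+1}=0$, pick one further common index $j\notin\{i,i+1\}$ with $b_j=c_j=1$, and set every other entry to $0$. Then $\|\vec B\|=\|\vec C\|=2$ and, by Lemma~\ref{fixedpoints} and monotonicity, both $F_{\vec B}$ and $F_{\vec C}$ are locally constant (taking only the values $0,\tfrac12,1$) on $[0,1]\setminus\bigl((i/N,(i+2)/N)\cup(j/N,(j+1)/N)\bigr)$, and they agree there. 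Now the pigeonhole is clean and involves only the \emph{positions} of the $x_n$, not their digit expansions: among the $\lfloor N/2\rfloor$ disjoint length-$2/N$ windows one is free of all $k<\lfloor N/2\rfloor$ sample points, giving $i$; and since $N\ge 4$ one more length-$1/N$ slot outside $\{i,i+1\}$ is also free, giving $j$. So your approach is salvageable, but only once you specify the remaining entries of $\vec B,\vec C$ to be zero---that is the step that collapses the base-$N$-expansion obstacle you identified.
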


\begin{proof}
First, we note that there exists an integer $i$ such that $x_n \notin \left(\frac{i}{N},\frac{i+2}{N}\right)$ for all $n \in \{1,2,...,k\}$ by the pigeon-hole principle.

Next, since $N \geq 4$, we also have that there exists an integer $j \in \{0,1,...,N-1\}\setminus\{i,i+1\}$ such that $x_n \notin \left(\frac{j}{N},\frac{j+1}{N}\right)$ for all $n \in \{1,2,...,k\}$.
\\\\
We construct two distinct digit sets $\vec{B}=(b_0,b_1,...,b_{N-1})$ and $\vec{C}=(c_0,c_1,...,c_{N-1})$ as follows: Let $b_i=0$, $b_{i+1}=1$, $c_i=1$, $c_{i+1}=0$, $b_j=c_j=1$, and $b_m=c_m=0$ for all $m \notin \{i,i+1,j\}$. Note that both digit sets are nondegenerate since two digits are kept and $\|\vec{B}\|=\|\vec{C}\|=2$. We note that since $\|\vec{B}\|=\|\vec{C}\|$ and $\vec{B}\neq\vec{C}$, then $F_{\vec{B}}\neq F_{\vec{C}}$. We conclude the proof by showing that $F_{\vec{B}}(x_n) = F_{\vec{C}}(x_n)$ for all $n$.
\\\\
\textbf{Case 1:} $i<j$ \\
Let $x \leq \frac{i}{N}$. Then by Lemma \ref{fixedpoints}
\[
0 \leq F_{\vec{B}}(x) \leq F_{\vec{B}}\left(\frac{i}{N}\right) = \frac{g_{\vec{B}}(i)}{2} = 0.
\]
Likewise, $F_{\vec{C}}(x) = 0$. Now let $\frac{i + 2}{N} \leq x \leq \frac{j}{N}$. Then
\[
\frac{1}{2} = \frac{g_{\vec{B}}(i+2)}{2} = F_{\vec{B}}\left(\frac{i + 2}{N}\right) \leq F_{\vec{B}}(x) \leq F_{\vec{B}}\left(\frac{j}{N}\right) = \frac{g_{\vec{B}}(j)}{2} = \frac{1}{2}.
\]
Likewise, $F_{\vec{C}}(x) = \frac{1}{2}$. Finally let $\frac{j+1}{N} \leq x \leq 1$. Then
\[
1 = \frac{g_{\vec{B}}(j+1)}{2} = F_{\vec{B}}\left(\frac{j+1}{N}\right) \leq F_{\vec{B}}(x) \leq 1.
\]
Likewise, $F_{\vec{C}}(x) = 1$. Thus, $F_{\vec{B}}(x_n) = F_{\vec{C}}(x_n)$ for all $n$.
\\\\
\textbf{Case 2:} $j<i$ \\
The argument is analogous to the one given for case 1, and we omit the details. \\\\
Figures \ref{N/2prooffig1} and \ref{N/2prooffig2} depict cases 1 and 2, respectively.
%By Proposition \ref{fixedpoints} and Theorem \ref{basicCDF},
%\begin{align*}
%F_1(0)=F_2(0)=&0=F_1\left(\frac{i}{N}\right)=F_2\left(\frac{i}{N}\right)\\
%F_1\left(\frac{i+2}{N}\right)=F_2\left(\frac{i+2}{N}\right)=&\frac{1}{2}=F_1\left(\frac{j}{N}\right)=F_2\left(\frac{j}{N}\right)\\ F_1\left(\frac{j+1}{N}\right)=F_2\left(\frac{j+1}{N}\right)=&1=F_1(1)=F_2(1)\hspace{2cm} \text{ (Figure \ref{N/2prooffig1})}
%\end{align*}
%Therefore, since $x_n\in \left[0,\frac{i}{N}\right]\cup \left[\frac{i+2}{N},\frac{j}{N}\right]\cup \left[\frac{j+1}{N},1\right]$ for all $n$, and $F_1$ and $F_2$ are increasing functions, $F_1(x_n)=F_2(x_n)$ $\forall \, x_n$.
\begin{figure}[htp]\label{distinctCDF1}
\centering
\includegraphics[width=8cm]{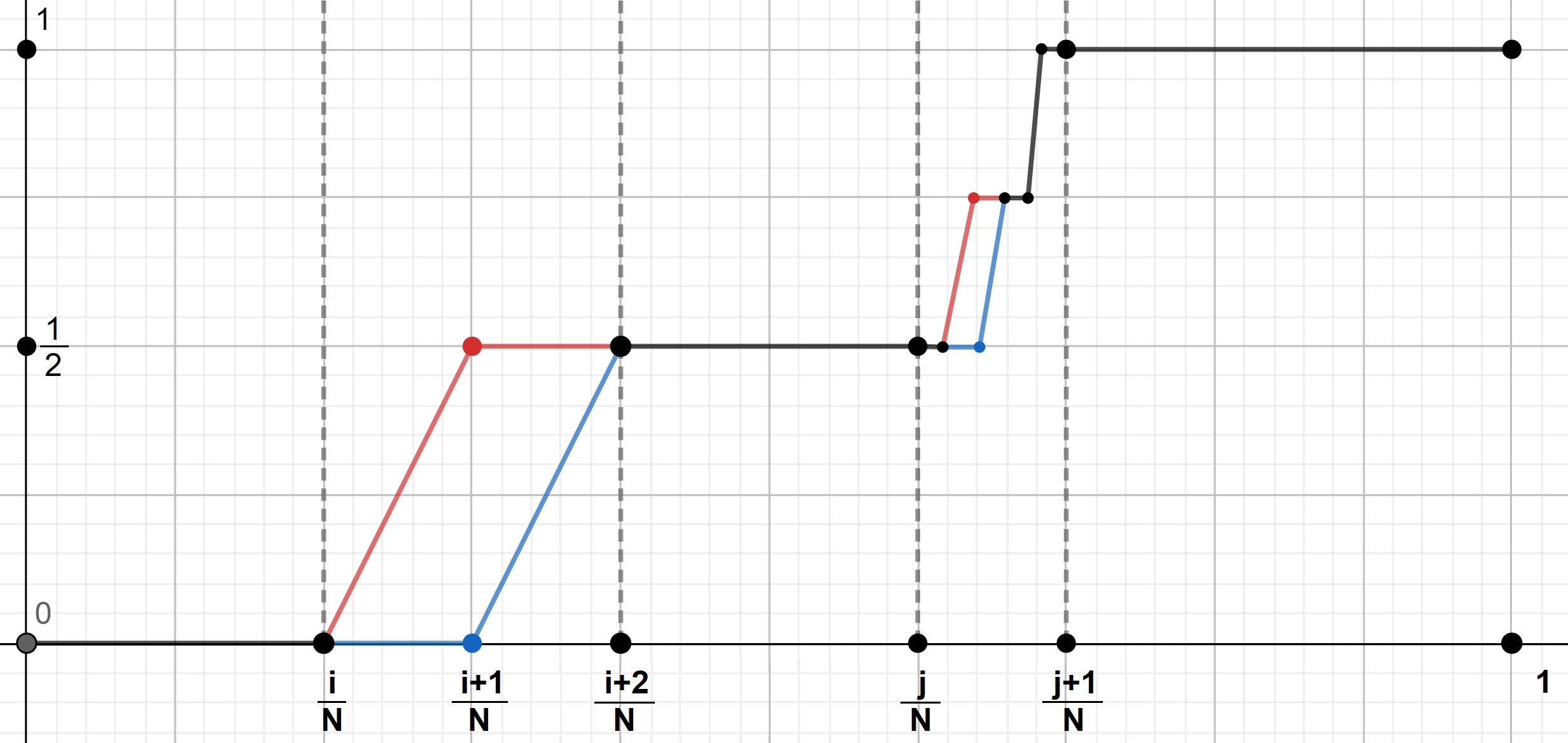}
\caption{Case 1 --- Sketch of piecewise linear approximations of $F_{\protect\vec{B}}$ (blue) and $F_{\protect\vec{C}}$ (red)}
\label{N/2prooffig1}
\end{figure}
\begin{figure}[htp]\label{distinctCDF2}
    \centering
    \includegraphics[width=8cm]{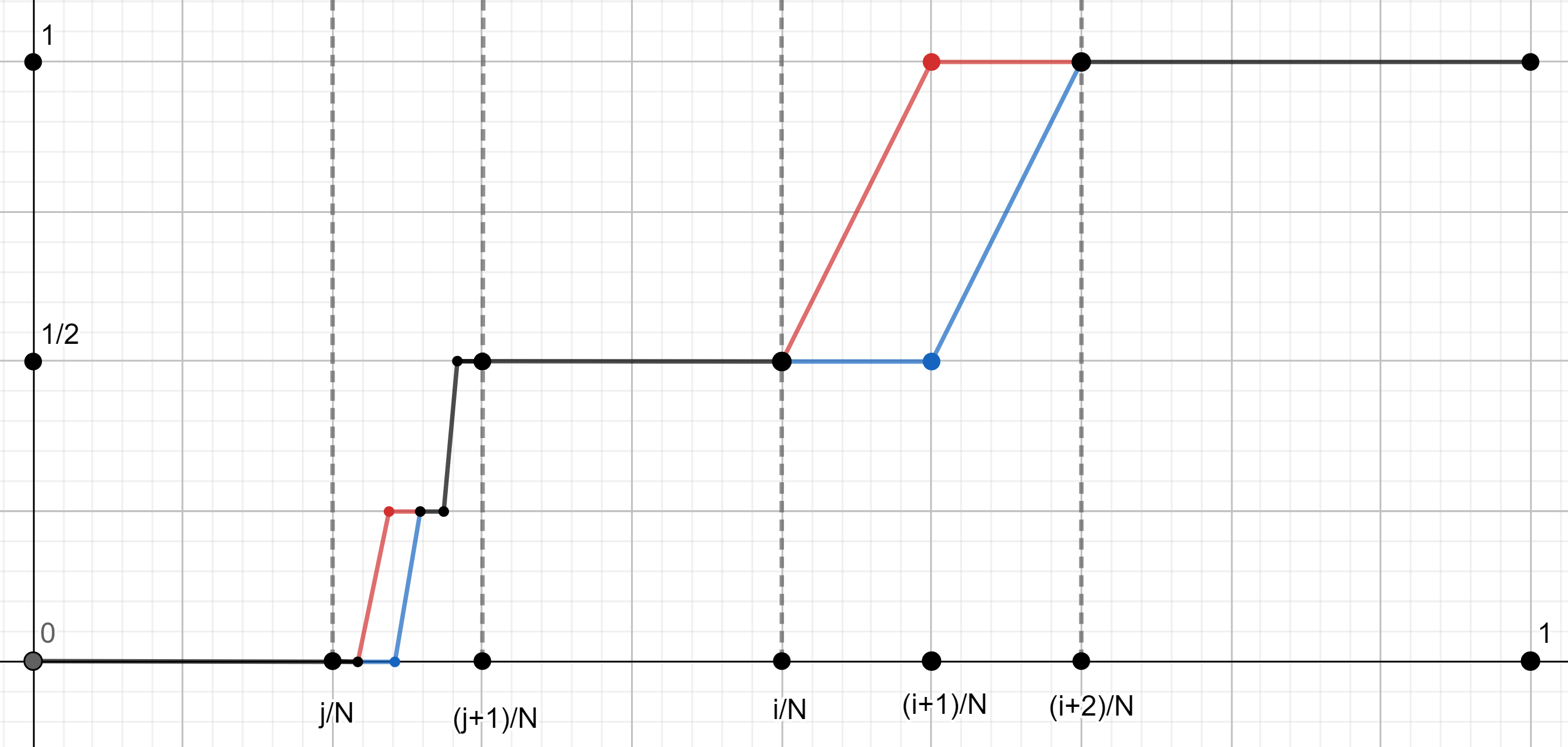}
    \caption{Case 2 --- Sketch of piecewise linear approximations of $F_{\protect\vec{B}}$ (blue) and $F_{\protect\vec{C}}$ (red)}
   \label{N/2prooffig2}
\end{figure}
%Similarly, by Proposition \ref{fixedpoints} and Theorem \ref{basicCDF},
%\begin{align*}
%F_1(0)=F_2(0)=&0=F_1\left(\frac{j}{N}\right)=F_2\left(\frac{j}{N}\right)\\
%F_1\left(\frac{j+1}{N}\right)=F_2\left(\frac{j+1}{N}\right)=&\frac{1}{2}=F_1\left(\frac{i}{N}\right)=F_2\left(\frac{i}{N}\right)\\ F_1\left(\frac{i+2}{N}\right)=F_2\left(\frac{i+2}{N}\right)=&1=F_1(1)=F_2(1) \hspace{2cm} \text{ (Figure \ref{N/2prooffig2})}
%\end{align*}
%Therefore, since $x_n\in \left[0,\frac{j}{N}\right]\cup \left[\frac{j+1}{N},\frac{i}{N}\right]\cup \left[\frac{i+2}{N},1\right]$ for all $n$, and $F_1$ and $F_2$ are increasing functions, $F_1(x_n)=F_2(x_n)$ $\forall \, x_n$.\\
%Thus, $F_1(x_n)=F_2(x_n)$ $\forall \, n\in \{1,...,k\}$ but $B \neq C$ $\Rightarrow$ $F_1\neq F_2$

\end{proof}

The next proposition observes the relationship between the CDFs of the digit set $\vec{B}$ and its reverse $\cev{B}$, that is $\cev{B}(n) = \vec{B}(N - 1 - n)$ for all $n$ where $N$ is the length of $\vec{B}$.

\begin{proposition}\label{reverse}
Let $\vec{B}$ be a digit set. Then,
\[
F_{\cev{B}}(x) = 1-F_{\vec{B}}(1 - x).
\]
\end{proposition}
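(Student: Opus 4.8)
The plan is to exploit the symmetry of the Cantor set construction under the reflection $x \mapsto 1 - x$. The key observation is that reversing the digit vector $\vec{B}$ corresponds exactly to reflecting $[0,1]$ about its midpoint: the maps $\phi_d(x) = \frac{x+d}{N}$ for the IFS of $\vec{B}$ become, after conjugating by $x \mapsto 1-x$, the maps $x \mapsto 1 - \frac{(1-x)+d}{N} = \frac{x + (N-1-d)}{N}$, which are precisely the IFS maps for $\cev{B}$ since $d \in D$ if and only if $N-1-d \in \cev{D}$. So the reflected Cantor set $\{1 - x : x \in C_{\vec{B}}\}$ equals $C_{\cev{B}}$, and the pushforward of $\mu_{\vec{B}}$ under $x \mapsto 1-x$ is the invariant measure $\mu_{\cev{B}}$ (using uniqueness from Theorem \ref{Hutch}).

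Concretely, I would first verify the measure identity directly: let $\nu$ be the pushforward of $\mu_{\vec{B}}$ under $R(x) = 1-x$, i.e. $\nu(A) = \mu_{\vec{B}}(R^{-1}(A)) = \mu_{\vec{B}}(1 - A)$. One checks $\nu$ satisfies the invariance equation of Theorem \ref{Hutch} for $\cev{B}$, namely $\nu = \frac{1}{\|\cev{B}\|} \sum_{d \in \cev{D}} \nu \circ \psi_d^{-1}$ where $\psi_d$ are the $\cev{B}$-maps, by translating through $R$ and using the invariance of $\mu_{\vec{B}}$; since $\|\vec{B}\| = \|\cev{B}\|$ this is routine bookkeeping. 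By uniqueness, $\nu = \mu_{\cev{B}}$. Then
\[
F_{\cev{B}}(x) = \mu_{\cev{B}}([0,x]) = \mu_{\vec{B}}(1 - [0,x]) = \mu_{\vec{B}}([1-x, 1]) = \mu_{\vec{B}}([0,1]) - \mu_{\vec{B}}([0,1-x)) = 1 - \mu_{\vec{B}}([0,1-x)),
\]
and since $\mu_{\vec{B}}$ has no atoms ($F_{\vec{B}}$ is continuous), $\mu_{\vec{B}}([0,1-x)) = \mu_{\vec{B}}([0,1-x]) = F_{\vec{B}}(1-x)$, giving the claim for $x \in [0,1]$. The boundary/extension cases $x < 0$ and $x > 1$ follow from the stated convention ($F_{\vec{B}} = 0$, resp. $1$, outside $[0,1]$).

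An alternative, more self-contained route avoids measure pushforwards and instead uses Proposition \ref{NtoDrel} (the base-$N$ digit expansion formula for $F_{\vec{B}}$) together with Lemma \ref{fixedpoints}: one shows both sides agree on the dense set of points $\frac{k}{N}$ (where $g_{\cev{B}}(k) = \|\vec{B}\| - g_{\vec{B}}(N-k)$ is an easy counting identity), or more generally matches the digit-expansion series term by term under the substitution $n_i \mapsto N-1-n_i$, then invokes continuity of both sides. The main obstacle — really the only subtle point — is the careful handling of the endpoint/atom issue, i.e. making sure that the half-open versus closed intervals match up; this is exactly where continuity of $F_{\vec{B}}$ (noted immediately after Definition \ref{D:CDF}) is essential, and I would flag it explicitly rather than let it slide. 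Everything else is a direct symmetry computation.
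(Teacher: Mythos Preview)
Your proposal is correct. The primary route you take---pushing forward $\mu_{\vec{B}}$ under the reflection $R(x)=1-x$, verifying that the pushforward satisfies Hutchinson's invariance equation for $\cev{B}$, and then invoking uniqueness (Theorem~\ref{Hutch})---is genuinely different from the paper's own argument. The paper instead works directly at the level of CDFs: it proves $F_{\cev{B}}(x)+F_{\vec{B}}(1-x)=1$ on the dense set of $N$-adic rationals by induction on the depth $k$ of the expansion, applying the CDF invariance equation (Lemma~\ref{invariance}) at each step, and then closes with continuity. Your alternative sketch (agreement on a dense set via the digit formula or the counting identity $g_{\cev{B}}(k)=\|\vec{B}\|-g_{\vec{B}}(N-k)$) is closer in spirit to the paper's method, though the paper organizes its induction around Lemma~\ref{invariance} rather than Proposition~\ref{NtoDrel}.

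Your measure-pushforward argument is shorter and more structural: one symmetry observation about the IFS plus Hutchinson uniqueness does all the work, and it would generalize immediately to non-uniform weights or other affine symmetries of the IFS. The paper's computation is more self-contained within the CDF formalism already set up and avoids reopening the measure-theoretic machinery, at the cost of a somewhat laborious inductive step. Both proofs lean on continuity of $F_{\vec{B}}$ at the same crucial moment (passing from a dense set, or from half-open to closed intervals), which you rightly flag explicitly.
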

\begin{proof}
Since $F_{\cev{B}}(x) + F_{\vec{B}}(1 - x)$ is continuous, it suffices to show the equality on a dense subset of the unit interval. Specifically, we show the identity on the set of $N$-adic numbers, that is
\[
\left\{\frac{1}{N^k}\sum_{\ell=0}^{k-1}n_\ell N^\ell\,\middle|\,k \in \mathbb{N}, n_\ell \in \{0,1,...,N-1\}\right\},
\]
where $N$ is the length of $\vec{B}$. We first observe that the simplest case, when $k = 1$, holds.
\[
F_{\cev{B}}\left(\frac{n_0}{N}\right) + F_{\vec{B}}\left(1 - \frac{n_0}{N}\right) = F_{\cev{B}}\left(\frac{n_0}{N}\right) + F_{\vec{B}}\left(\frac{N - n_0}{N}\right) = \sum_{n=0}^{n_0-1}\frac{b_{N - 1 - n}}{\|\vec{B}\|} + \sum_{n=0}^{N-n_0-1}\frac{b_n}{\|\vec{B}\|} = 1.
\]
We proceed by induction on the power of the $N$-adic number, assuming the identity is true for $k$. Then, by Lemma \ref{invariance},
\begin{align*}
&F_{\cev{B}}\left(\frac{1}{N^{k+1}}\sum_{\ell=0}^kn_\ell N^\ell\right) + F_{\vec{B}}\left(1 - \frac{1}{N^{k+1}}\sum_{\ell=0}^kn_\ell N^\ell\right)\\
&= F_{\cev{B}}\left(\frac{1}{N^{k+1}}\sum_{\ell=0}^kn_\ell N^\ell\right) + F_{\vec{B}}\left(\frac{1}{N^{k+1}} + \frac{1}{N^{k+1}}\sum_{\ell=0}^k(N - 1 - n_\ell) N^\ell\right) \\
&= \sum_{n=0}^{N-1}\frac{b_{N - 1 - n}}{\|\vec{B}\|}F_{\cev{B}}\left(n_k - n + \frac{1}{N^k}\sum_{\ell=0}^{k-1}n_\ell N^\ell\right) + \frac{b_n}{\|\vec{B}\|}F_{\vec{B}}\left(N - 1 - n_k - n + \frac{1}{N^k} + \frac{1}{N^k}\sum_{\ell=0}^{k-1}(N - 1 - n_\ell) N^\ell\right) \\
&= \frac{\|\vec{B}\| - b_{N - 1 - n_k}}{\|\vec{B}\|} + \frac{b_{N - 1 - n_k}}{\|\vec{B}\|}\left[F_{\cev{B}}\left(\frac{1}{N^k}\sum_{\ell=0}^{k-1}n_\ell N^\ell\right) + F_{\vec{B}}\left(\frac{1}{N^k} + \frac{1}{N^k}\sum_{\ell=0}^{k-1}(N - 1 - n_\ell) N^\ell\right)\right] \\
&= \frac{\|\vec{B}\| - b_{N - 1 - n_k}}{\|\vec{B}\|} + \frac{b_{N - 1 - n_k}}{\|\vec{B}\|}\left[F_{\cev{B}}\left(\frac{1}{N^k}\sum_{\ell=0}^{k-1}n_\ell N^\ell\right) + F_{\vec{B}}\left(1 - \frac{1}{N^k}\sum_{\ell=0}^{k-1}n_\ell N^\ell\right)\right] \\
&= \frac{\|\vec{B}\| - b_{N - 1 - n_k}}{\|\vec{B}\|} + \frac{b_{N - 1 - n_k}}{\|\vec{B}\|} = 1.
\end{align*}
Thus, the identity holds on the $N$-adic numbers, and the proof is done.

\end{proof}

We say that a sampling algorithm is \emph{conditional} if previously attained samples inform the selection of the next sample. For the remainder of this section, we describe a conditional sampling algorithm that completely determines a digit set $\vec{B}$ given its scale factor $N$. The algorithm as stated below requires at most $\left\lfloor\frac{N}{2}\right\rfloor$ samples to execute successfully which we note is the minimum number of samples that is required under non-conditional sampling to discern digit sets of equal scale factor. We first state the result.

\begin{theorem}\label{conditional}
Fix an integer $N \geq 3$, and let $\vec{B} = (b_0,b_1,...,b_{N-1})$ be a digit set with $2 \leq \|\vec{B}\| \leq N - 1$. Then there is a conditional sampling algorithm with at most $\left\lfloor\frac{N}{2}\right\rfloor$ points that completely determines $F_{\vec{B}}$.
\end{theorem}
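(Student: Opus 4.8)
The plan is to design a recursive sampling procedure that, at each step, uses one new sample of $F_{\vec B}$ together with the invariance equation (Lemma \ref{invariance}) to pin down one entry of $\vec B$ near an "endpoint" of the as-yet-undetermined portion, while simultaneously reducing the problem to a sub-CDF of strictly smaller scale factor. The key observation is that by Lemma \ref{basicCDF}, consecutive dyadic-type values $F_{\vec B}(m/N)$ and $F_{\vec B}((m+1)/N)$ coincide exactly when $b_m = 0$; and by Proposition \ref{NtoDrel}, once we know the initial digits $b_{n_1}, b_{n_2}, \dots$, the value of $F_{\vec B}$ at a point with base-$N$ expansion $(n_1, n_2, \dots)$ decomposes into a "known head" plus a scaled copy of another CDF value. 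So if I sample at a point whose base-$N$ digits I control, I can strip off known contributions and read off information about a later digit.

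The concrete algorithm I would carry out is a two-sided scan. Maintain a window $[\ell, r] \subset \{0, 1, \dots, N\}$ of indices for which the digits $b_\ell, \dots, b_{r-1}$ are not yet determined, initialized to $[0,N]$; we know $F_{\vec B}(\ell/N)$ and $F_{\vec B}(r/N)$ at all times (initially $0$ and $1$). At each step, sample $F_{\vec B}$ at the midpoint-type point $x = \ell/N + (r-\ell)/(2N) \cdot(\text{something } N\text{-adic})$ — more precisely, I would sample at a carefully chosen $N$-adic point in the gap whose first digit isolates whether the "left half" of the window contains any kept digit. Comparing the sampled value to $F_{\vec B}(\ell/N)$ and $F_{\vec B}(r/N)$, and using Lemma \ref{fixedpoints} to convert back and forth between $F$-values and partial sums of the $b_i$, tells me in which half of the window the next kept digit lies, and when the half has length $\le 2$ the digit is forced. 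Each sample cuts the "remaining kept-digit search space" in half, and because $\|\vec B\| \le N-1$ the zeros are plentiful: every time we sample we either localize a kept digit or certify a block of zeros. The reversal identity (Proposition \ref{reverse}) lets us run the scan from the right end as effectively as from the left, which is what brings the count down from $N-1$ to $\lfloor N/2\rfloor$: we alternate attacking from the $0$-side and the $1$-side, and the two scans meet in the middle after $\lfloor N/2 \rfloor$ probes.

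I would organize the proof by (a) stating the invariants maintained by the algorithm (which digits are known, which $F$-values are known, and a monotone potential counting undetermined digits), (b) describing the rule for choosing the next sample point as an explicit $N$-adic rational depending on the current window, (c) verifying via Lemma \ref{fixedpoints} and the invariance equation that the returned value determines at least one new digit (or a full block of zeros) and hence decreases the potential, and (d) bounding the number of iterations by $\lfloor N/2\rfloor$ using the alternating left/right strategy together with the constraint $2 \le \|\vec B\|$. The main obstacle I anticipate is the bookkeeping in step (c): making the sample-point choice explicit enough that the algebraic identity "(sampled value) $-$ (known head) $=$ (scaled unknown tail)" genuinely isolates a single new digit rather than an ambiguous combination, and handling the parity edge cases when $N$ is odd and the left and right scans would otherwise collide on the same middle index. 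I expect the termination/counting argument itself to be routine once the per-step progress is established.
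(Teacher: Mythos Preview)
Your proposal has a genuine gap in the counting. You write that each sample ``pin[s] down one entry of $\vec B$'' and that alternating the scan from the left and right ends ``brings the count down from $N-1$ to $\lfloor N/2\rfloor$.'' But if each sample determines one digit, then $\lfloor N/2\rfloor$ samples determine only $\lfloor N/2\rfloor$ digits, not all $N$ of them; running from both ends does not change this arithmetic. The ``certify a block of zeros'' escape hatch does not help in the worst case: for a vector such as $\vec B=(1,0,1,0,\dots)$ there are no nontrivial zero blocks to certify, and your scheme as described would need on the order of $N$ probes. Moreover, sampling at first-level points $k/N$ cannot resolve the pairwise ambiguity: the value $F_{\vec B}(2m/N)=g_{\vec B}(2m)/\|\vec B\|$ only reveals $b_{2m-2}+b_{2m-1}$, not which of $(1,0)$ or $(0,1)$ occurred.

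What is missing is a mechanism that extracts \emph{two} digits from a single sample. The paper does this by sampling not at $2m/N$ but at the deeper $N$-adic point
\[
x_m=\frac{2m}{N^{\ell+1}}+\sum_{n=1}^{\ell}\frac{2m-1}{N^{n}},
\]
whose base-$N$ expansion is the digit $2m-1$ repeated $\ell$ times followed by $2m$. The invariance equation then gives $F_{\vec B}(x_m)=\psi_m^{\ell+1}(1)$ with $\psi_m(t)=\frac{g_{\vec B}(2m-1)}{\|\vec B\|}+\frac{b_{2m-1}}{\|\vec B\|}t$, and the point is that $b_{2m-2}$ enters through $g_{\vec B}(2m-1)$ while $b_{2m-1}$ enters through the slope, so for $\ell$ large enough (specifically $2^{\ell+1}>N-1$) the four cases $(b_{2m-2},b_{2m-1})\in\{0,1\}^2$ produce four distinct values. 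That is the idea your sketch would need in step~(b)/(c) before the termination argument in~(d) can go through; the use of Proposition~\ref{reverse} you mention is indeed how the paper handles the right-hand half, but only after this two-for-one trick is in place.
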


The conditional sampling algorithm that answers Theorem \ref{conditional} is located in the appendix and split into two parts. Each part considers pairs of digits from $\vec{B}$ at a time, e.g. $(b_0,b_1)$, $(b_2,b_3)$, etc. The role of Algorithm 1 is to find the first nonzero digit of $\vec{B}$. As a consequence of the method, we can also find $\|\vec{B}\|$ from the sampling in Algorithm 1. Then the algorithm terminates if the first nonzero digit occurred in the last pair, i.e. $(b_{N-2},b_{N-1})$ if $N$ is even or $(b_{N-3},b_{N-2})$ if $N$ is odd, as $\vec{B}$ is then completely determined; otherwise, Algorithm 2 applies a similar procedure to $\cev{B}$. The sampling in Algorithm 2 is expressed in terms of $F_{\cev{B}}$ which translates to a sampling of $F_{\vec{B}}$ by Proposition \ref{reverse}. Then the maximum number of samples from both Algorithm 1 and Algorithm 2 is precisely the number of paired digits, that is there are at most $\left\lfloor\frac{N}{2}\right\rfloor$ samples. In the proof of the Theorem \ref{conditional}, we show that there exists a positive integer $\ell$ that is only dependent on $N$ (the smallest positive $\ell$ such that $2^{\ell + 1} > N - 1$ is sufficient) such that Algorithm 1 and Algorithm 2 are well-defined and completely determine $\vec{B}$.

\begin{proof}
Let $m \in \left\{1,2,...,\left\lfloor\frac{N}{2}\right\rfloor\right\}$. For convenience, we denote
\[
\psi_m(x) = \frac{g_{\vec{B}}(2m - 1)}{\|\vec{B}\|} + \frac{b_{2m - 1}}{\|\vec{B}\|}x,
\]
and use the notation $\psi^{\ell}_m = \psi_m\circ ...\circ\psi_m$ to represent the composition of $\ell$ functions. We claim that
\begin{align}
F_{\vec{B}}\left(\frac{2m}{N^{\ell+1}} + \sum_{n=1}^\ell\frac{2m-1}{N^n}\right) = \psi_m^{\ell + 1}(1). \label{compositions}
\end{align}
The case when $\ell = 0$ immediately follows from Lemma \ref{fixedpoints} since
\[
F_{\vec{B}}\left(\dfrac{2m}{N}\right) = \dfrac{g_{\vec{B}}(2m)}{\|\vec{B}\|} = \psi_m(1).
\]
To prove identity (\ref{compositions}) in general, we proceed by induction, so assume that the identity holds for $\ell$. Then, by Lemma \ref{invariance}, we find
\begin{align*}
F_{\vec{B}}\left(\frac{2m}{N^{\ell+2}} + \sum_{n=1}^{\ell+1}\frac{2m-1}{N^n}\right) &= \sum_{k=0}^{N-1}\dfrac{b_k}{\|\vec{B}\|}F_{\vec{B}}\left(\frac{2m}{N^{\ell+1}} + \left[\sum_{n=1}^\ell\frac{2m-1}{N^n}\right] + 2m - 1 - k\right) \\
&= \left[\sum_{k=0}^{2m - 2}\dfrac{b_k}{\|\vec{B}\|}\right] + \dfrac{b_{2m-1}}{\|\vec{B}\|}F_{\vec{B}}\left(\frac{2m}{N^{\ell+1}} + \sum_{n=1}^\ell\frac{2m-1}{N^n}\right) \\
&= \dfrac{g_{\vec{B}}(2m-1)}{\|\vec{B}\|} + \dfrac{b_{2m-1}}{\|\vec{B}\|}\psi_m^{\ell + 1}(1) \\
&= \psi_m^{\ell + 2}(1),
\end{align*}
as desired.

There are four cases to consider:
\\\\
\textbf{Case 1:} $b_{2m - 2} = b_{2m - 1} = 0$. Then
\[
\psi_m^{\ell + 1}(1) = \frac{g_{\vec{B}}(2m - 2)}{\|\vec{B}\|}.
\]
\textbf{Case 2:} $b_{2m - 2} = 1$; $b_{2m - 1} = 0$. Then
\[
\psi_m^{\ell + 1}(1) = \frac{g_{\vec{B}}(2m - 2) + 1}{\|\vec{B}\|}.
\]
\textbf{Case 3:} $b_{2m - 2} = 0$; $b_{2m - 1} = 1$. Then
\[
\psi_m^{\ell + 1}(1) = \frac{g_{\vec{B}}(2m - 2) + 1}{\|\vec{B}\|^{\ell + 1}} + \sum_{n = 1}^\ell\frac{g_{\vec{B}}(2m - 2)}{\|\vec{B}\|^n} = \frac{g_{\vec{B}}(2m - 2) + 1}{\|\vec{B}\|^{\ell + 1}} + g_{\vec{B}}(2m - 2)\frac{\|\vec{B}\|^\ell - 1}{(\|\vec{B}\| - 1)\|\vec{B}\|^\ell}.
\]
\textbf{Case 4:} $b_{2m - 2} = b_{2m - 1} = 1$. Then
\[
\psi_m^{\ell + 1}(1) = \frac{g_{\vec{B}}(2m - 2) + 2}{\|\vec{B}\|^{\ell + 1}} + \sum_{n = 1}^\ell\frac{g_{\vec{B}}(2m - 2) + 1}{\|\vec{B}\|^n} = \frac{g_{\vec{B}}(2m - 2) + 2}{\|\vec{B}\|^{\ell + 1}} + (g_{\vec{B}}(2m - 2) + 1)\frac{\|\vec{B}\|^\ell - 1}{(\|\vec{B}\| - 1)\|\vec{B}\|^\ell}.
\]
In Algorithm 1, we have $g_{\vec{B}}(2m - 2) = 0$. If $\psi_m^{\ell + 1}(1) = 0$, then clearly $b_{2m - 2} = b_{2m - 1} = 0$; else
\[
\psi_m^{\ell + 1}(1) \in \left\{\frac{1}{\|\vec{B}\|},\frac{1}{\|\vec{B}\|^{\ell+1}},\frac{1}{\|\vec{B}\| - 1} + \frac{\|\vec{B}\| - 2}{(\|\vec{B}\| - 1)\|\vec{B}\|^{\ell + 1}}\,\middle|\,2\leq \|\vec{B}\| \leq N - 1\right\}.
\]
Using some basic algebra, we note that for $\ell \geq 1$,
\[
\left\{\frac{1}{\|\vec{B}\|}\,\middle|\,2\leq \|\vec{B}\|\leq N - 1\right\}\bigcap\left\{\frac{1}{\|\vec{B}\| - 1} + \frac{\|\vec{B}\| - 2}{(\|\vec{B}\| - 1)\|\vec{B}\|^{\ell + 1}}\,\middle|\,2\leq \|\vec{B}\|\leq N - 1\right\} = \emptyset
\]
since the numbers are properly interlaced
\[
\sum_{n=1}^\ell\frac{1}{2^n}+\frac{2}{2^{\ell+1}} = 1 > \sum_{n=1}^\ell\frac{1}{3^n}+\frac{2}{3^{\ell+1}} > \frac{1}{2} > \sum_{n=1}^\ell\frac{1}{4^n}+\frac{2}{4^{\ell+1}} > \frac{1}{3} > ... > \frac{1}{N-1}.
\]
Thus, it suffices to find an integer $L$ such that for $\ell \geq L$,
\[
\left\{\frac{1}{\|\vec{B}\|},\frac{1}{\|\vec{B}\| - 1} + \frac{\|\vec{B}\| - 2}{(\|\vec{B}\| - 1)\|\vec{B}\|^{\ell + 1}}\,\middle|\,2\leq \|\vec{B}\| \leq N - 1\right\}\bigcap\left\{\frac{1}{\|\vec{B}\|^{\ell + 1}}\,\middle|\,2\leq \|\vec{B}\|\leq N - 1\right\} = \emptyset.
\]
The simplest way to find such an $L$ is to take the smallest positive integer $L$ such that $2^{L+1} > N - 1$. \\\\
It follows that we can then determine the parameters $(b_{2m-2},b_{2m-1},\|\vec{B}\|) \in \{0,1\}\times\{0,1\}\times\{2,3,...,N-1\}$.
\\\\
In the validation of Algorithm 2, it is equivalent to consider the three situations:
\\\\
Situation 1. $g_{\vec{B}}(2m - 2) = 0$
\\\\
Situation 2. $g_{\vec{B}}(2m - 2) = \|\vec{B}\| - 1$
\\\\
Situation 3. $0 < g_{\vec{B}}(2m - 2) < \|\vec{B}\| - 1$
\\\\
As for situation 1, we just showed that we may solve for $b_{2m - 2}$ and $b_{2m - 1}$. It is clear that $b_{2m - 2} = b_{2m - 1} = 0$ in situation 2 since Algorithm 1 identified a nonzero digit. Under the assumption of situation 3, we have that all of the values of $\psi_m^{\ell + 1}(1)$ in cases 1 through 4 are distinct. This follows from tedious algebra, so we only show that Case 2 and Case 3 are different and leave the remainder to the reader to verify. Since $g_{\vec{B}}(2m - 2) < \|\vec{B}\| - 1$, we have $(g_{\vec{B}}(2m - 2) + 1)(\|\vec{B}\|^\ell - 1) + \|\vec{B}\| < \|\vec{B}\|^{\ell + 1}$. Rearranging and combining terms, we find
\[
(g_{\vec{B}}(2m - 2) + 1)(\|\vec{B}\| - 1) + g_{\vec{B}}(2m - 2)(\|\vec{B}\|^\ell - 1)\|\vec{B}\| < (g_{\vec{B}}(2m - 2) + 1)(\|\vec{B}\| - 1)\|\vec{B}\|^\ell.
\]
We conclude that Case 2 and Case 3 are distinct from dividing through by $(\|\vec{B}\| - 1)\|\vec{B}\|^{\ell + 1}$.

\end{proof}

\begin{remark}
The sampling set
\[
\left\{\frac{2m}{N^{\ell+1}}+\sum_{n=1}^\ell\frac{2m-1}{N^n}\,\middle|\,m \in \left\{1,2,...,\left\lfloor\frac{N}{2}\right\rfloor\right\}\right\}
\]
completely determines $\vec{B}$ up to ambiguity of the last nonzero digit in $\vec{B}$. That is, suppose that for some $m\in\big\{1,2,...,\big\lfloor \frac{N}{2}\big\rfloor\big\}$, we have that $b_n=0$ for all $n>2m$. Then there is ambiguity in the binary digit vector elements $(b_{2m-2},b_{2m-1})$ as they could be either $(1,0)$ or $(0,1)$ and the samples would agree.
\end{remark}

\subsubsection{Rationality and the CDF}
\begin{lemma}\label{RatToRat}
Let $\vec{B}$ be a digit set of length $N$. If $x \in \mathbb{Q} \cap [0,1]$, then $F_{\vec{B}}(x) \in \mathbb{Q}$.
\end{lemma}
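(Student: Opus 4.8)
The plan is to exploit the finite $N$-adic structure of rationals in $[0,1]$ together with the series formula for $F_{\vec{B}}$ from Proposition \ref{NtoDrel}. Every rational $x \in \mathbb{Q} \cap [0,1]$ has an eventually periodic base-$N$ expansion $x = \sum_{i=1}^\infty \frac{n_i}{N^i}$ with $n_i \in \mathbb{Z}_N$; that is, there exist integers $p \geq 0$ (the length of the pre-period) and $q \geq 1$ (the period) such that $n_{i+q} = n_i$ for all $i > p$. So the first step is to recall this standard fact about base-$N$ representations of rationals, being slightly careful about the endpoint cases $x = 0$ and $x = 1$ (which are handled directly: $F_{\vec{B}}(0) = 0$, $F_{\vec{B}}(1) = 1$) and about the non-uniqueness of $N$-adic expansions, which does not matter here since $F_{\vec{B}}$ is well-defined as a function.

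Next I would substitute the eventually periodic expansion into the formula
\[
F_{\vec{B}}(x) = \sum_{i=1}^\infty \left(\prod_{k=1}^{i-1} b_{n_k}\right)\frac{g(n_i)}{\|\vec{B}\|^i}
\]
and split the sum into the pre-periodic part $\sum_{i=1}^p$ and the tail $\sum_{i=p+1}^\infty$. The pre-periodic part is a finite sum of rationals, hence rational. For the tail, observe that for $i > p$ the product $\prod_{k=1}^{i-1} b_{n_k}$ factors as $P \cdot \prod_{k=p+1}^{i-1} b_{n_k}$, where $P = \prod_{k=1}^{p} b_{n_k}$ is a fixed rational (in fact $0$ or $1$), and the remaining product runs over a periodic sequence of digits. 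If any $b_{n_k} = 0$ for some $k > p$ appearing within one full period, then all sufficiently large terms vanish and the tail is a finite sum, hence rational; otherwise $b_{n_k} = 1$ for all $k > p$, and then the tail becomes $P \sum_{i=p+1}^\infty \frac{g(n_i)}{\|\vec{B}\|^i}$, where $g(n_i)$ is periodic with period $q$. A periodic-coefficient geometric-type series $\sum \frac{g(n_i)}{\|\vec{B}\|^i}$ sums to a rational number by the standard formula for repeating "decimals" in base $\|\vec{B}\|$: grouping the terms in blocks of length $q$ gives $\frac{\text{(rational)}}{1 - \|\vec{B}\|^{-q}}$, which is rational since $\|\vec{B}\| \geq 2$.

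I would then assemble these pieces: $F_{\vec{B}}(x)$ is a finite sum of rationals, so it is rational. The main obstacle — really the only subtle point — is bookkeeping the indices correctly when the pre-period and period interact with the cumulative product $\prod b_{n_k}$: one must notice that the product is itself eventually "periodic" in the weak sense that either it is eventually zero or it stabilizes, and handle those two subcases. Everything else is routine. An alternative, perhaps cleaner, route is to avoid Proposition \ref{NtoDrel} altogether: write $x = \frac{a}{N^p(N^q - 1)}$ or similar and use the invariance equation (Lemma \ref{invariance}) to reduce $F_{\vec{B}}(x)$ to a finite linear system over $\mathbb{Q}$ in the finitely many values $\{F_{\vec{B}}(y)\}$ as $y$ ranges over the (finite) forward orbit of $x$ under the maps $t \mapsto Nt - n$ restricted to $[0,1]$; eventual periodicity of the base-$N$ expansion guarantees this orbit is finite, the linear system has rational coefficients, and Lemma \ref{fixedpoints} pins down the values at $N$-adic rationals to close the system. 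I would likely present the series argument as the main proof since it is the most self-contained.
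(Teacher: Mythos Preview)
Your proposal is correct and follows essentially the same approach as the paper: handle the endpoints trivially, invoke Proposition~\ref{NtoDrel} together with the eventual periodicity of the base-$N$ expansion of a rational, and split into the case where some $b_{n_k}=0$ (yielding a finite sum) versus the case where all $b_{n_k}=1$ (yielding an eventually periodic $\|\vec{B}\|$-adic expansion). The only cosmetic difference is that you organize the case split around the pre-period/period decomposition while the paper simply asks whether any $b_{n_\ell}$ vanishes; both lead to the same conclusion.
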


\begin{proof} We first note that $F_{\vec{B}}(0)=0$ and $F_{\vec{B}}(1)=1$.

Then let $x \in \mathbb{Q}\cap(0,1)$, and consider its $N$-adic representation $x = \sum\limits_{i=1}^\infty\frac{n_i}{N^i}$
where $n_i \in \{0,1,...,N-1\}$. Since $x$ is rational, the sequence $\{n_i\}_{i=1}^\infty$ is eventually periodic. Recall from Proposition \ref{NtoDrel} that
\[
F_{\vec{B}}(x) = \sum_{i=1}^\infty\left(\prod_{k=1}^{i-1}b_{n_k}\right)\frac{g_{\vec{B}}(n_i)}{\|\vec{B}\|^i}.
\]
If there exists a positive integer $\ell$ such that $b_{n_\ell} = 0$, then
\[
F_{\vec{B}}(x) = \sum_{i=1}^\ell\left(\prod_{k=1}^{i-1}b_{n_k}\right)\frac{g_{\vec{B}}(n_i)}{\|\vec{B}\|^i},
\]
which is rational. Note that this is the case if $g_{\vec{B}}(n_i) = \|\vec{B}\|$ for some $i$ as we may then take $\ell = i + 1$. Otherwise, assume that $b_{n_k} = 1$ for all $k$. Then $g_{\vec{B}}(n_i) \in \{0,1,...,\|\vec{B}\|-1\}$ for all $i$, and we have the $\|\vec{B}\|$-adic representation,
\[
F_{\vec{B}}(x) = \sum_{i=1}^\infty\frac{g_{\vec{B}}(n_i)}{\|\vec{B}\|^i}.
\]
Since the sequence $\{g_{\vec{B}}(n_i)\}_{i=1}^\infty$ is eventually periodic, it follows that $F_{\vec{B}}(x)$ is rational.

\end{proof}

%\begin{lemma}
%For all $x\in\mathbb{Q}\cap [0,1]$ and any $F_{\vec{B}}$, $F_{\vec{B}}(x)\in\mathbb{Q}$.
%\label{RatToRat}
%\end{lemma}
%\begin{proof}
%Let $d=|D|$. If $x=0$, $F_{\vec{B}}(x)=0\in\mathbb{Q}$. If $x=1$, $F_{\vec{B}}(x)=1\in\mathbb{Q}$. Consider $x\in(0,1)$.
%Let $B=(b_0,...,b_{N-1})$ be the binary representation and $g$ the cumulative digit function of $C_{\vec{B}}$.\\
%Since $x\in(0,1)$ it follows $x=\sum_{i=1}^\infty\frac{a_i}{N^i}$ for some $a_i\in \mathbb{Z}_N$.  Further, since $x$ is rational, $x$ is a repeating decimal in base $N$. Then, $\{a_i\}_{i=1}^\infty$ is eventually periodic.\\
%By Proposition \ref{NtoDrel}, it follows $F_{\vec{B}}(x)=\sum_{i=1}^\infty\left(\prod_{k=1}^{i-1}b_{n_k}\right)\frac{g({a_i})}{d^i}$. If there exists a smallest $K$ such that $b_{n_K}=0$, then $\left(\prod_{k=1}^{i-1}b_{n_k}\right)=0$ if and only if $i>K$. $F_{\vec{B}}(x)=\sum_{i=1}^K\frac{g({a_i})}{d^i}$, and $F_{\vec{B}}(x)\in\mathbb{Q}$.\\
%Otherwise, $\left(\prod_{k=1}^{i-1}b_{n_k}\right)=1$ for all $i$, so $F_{\vec{B}}(x)=\sum_{i=1}^\infty\frac{g({a_i})}{d^i}$.
%Since the $\{a_i\}_{i=1}^\infty$ is eventually periodic %and $g$ is a well-defined function, it follows that %$\{g(a_i)\}_{i=1}^\infty$ is eventually periodic as well. %Then, $F_{\vec{B}}(x)$ is a repeating decimal in base $d$. %Thus, $F_{\vec{B}}(x)\in\mathbb{Q}$.
%\end{proof}
\begin{lemma}
Let $C_{\vec{B}}$ be a Cantor set and $F_{\vec{B}}$ the CDF.
For $x\in {\mathbb{Q}^c}\cap[0,1]$, $x\in C_{\vec{B}}$ if and only if $F_{\vec{B}}(x)\in\mathbb{Q}^c.$
\label{IrrToIrr}
\end{lemma}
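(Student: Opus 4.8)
The plan is to exploit the fact, established in Proposition \ref{NtoDrel} and used again in Lemma \ref{RatToRat}, that for $x = \sum_{i=1}^\infty \frac{n_i}{N^i}$ with $n_i \in \{0,1,\dots,N-1\}$ we have
\[
F_{\vec{B}}(x) = \sum_{i=1}^\infty \left(\prod_{k=1}^{i-1} b_{n_k}\right)\frac{g_{\vec{B}}(n_i)}{\|\vec{B}\|^i},
\]
together with the observation that $x \in C_{\vec{B}}$ precisely when $x$ admits an $N$-adic expansion $\{n_i\}$ with $n_i \in D$ (equivalently $b_{n_i}=1$) for all $i$. (One has to be mildly careful about the dyadic-type ambiguity of $N$-adic expansions at endpoints of the removed intervals, but those ambiguous points are $N$-adic rationals, hence excluded by the hypothesis $x \in \mathbb{Q}^c$, so each irrational $x$ has a unique $N$-adic expansion.) The strategy splits into the two implications.

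For the forward direction, suppose $x \in \mathbb{Q}^c \cap C_{\vec{B}}$. Then $b_{n_k} = 1$ for every $k$, so all the products $\prod_{k=1}^{i-1} b_{n_k}$ equal $1$ and $g_{\vec{B}}(n_i) \in \{0,1,\dots,\|\vec{B}\|-1\}$ for every $i$ (it cannot equal $\|\vec{B}\|$ since $n_i \le N-1$ and $b_{n_i}=1$ forces $g_{\vec{B}}(n_i) < g_{\vec{B}}(n_i+1) \le \|\vec{B}\|$). Hence $F_{\vec{B}}(x) = \sum_{i=1}^\infty \frac{g_{\vec{B}}(n_i)}{\|\vec{B}\|^i}$ is a genuine base-$\|\vec{B}\|$ expansion of $F_{\vec{B}}(x)$ with digits $g_{\vec{B}}(n_i)$. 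I then argue that if $F_{\vec{B}}(x)$ were rational, this digit sequence $\{g_{\vec{B}}(n_i)\}$ would be eventually periodic; pulling this periodicity back through $g_{\vec{B}}$ — here I use that on the set $\{n : b_n = 1\}$ the map $g_{\vec{B}}$ is injective, so $\{n_i\}$ is determined by $\{g_{\vec{B}}(n_i)\}$ — forces $\{n_i\}$ itself to be eventually periodic, making $x$ rational, a contradiction. The one subtlety is the standard non-uniqueness of base-$\|\vec{B}\|$ expansions (trailing $(\|\vec{B}\|-1)$'s versus a carry): I would handle this exactly as in the proof of Lemma \ref{RatToRat}, noting that an eventually-all-$(\|\vec{B}\|-1)$ tail is itself an eventually periodic sequence, so it still yields rational $x$.

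For the converse, I prove the contrapositive: if $x \in \mathbb{Q}^c \cap [0,1]$ and $x \notin C_{\vec{B}}$, then $F_{\vec{B}}(x) \in \mathbb{Q}$. If $x \notin C_{\vec{B}}$ then in its (unique, since $x$ irrational) $N$-adic expansion there is a least index $\ell$ with $n_\ell \notin D$, i.e. $b_{n_\ell} = 0$. Then every product $\prod_{k=1}^{i-1} b_{n_k}$ with $i > \ell$ vanishes, so the series for $F_{\vec{B}}(x)$ truncates:
\[
F_{\vec{B}}(x) = \sum_{i=1}^\ell \left(\prod_{k=1}^{i-1} b_{n_k}\right)\frac{g_{\vec{B}}(n_i)}{\|\vec{B}\|^i},
\]
a finite rational sum. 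This is the easy half and essentially a special case of the argument already in Lemma \ref{RatToRat}.

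The main obstacle is the forward direction's number-theoretic core: converting "rational value $\Rightarrow$ eventually periodic $\|\vec{B}\|$-adic digit string" into "eventually periodic $N$-adic digit string for $x$" cleanly, while respecting the two distinct sources of expansion non-uniqueness (base $N$ on the $x$ side, base $\|\vec{B}\|$ on the $F_{\vec{B}}(x)$ side). I expect this to be manageable because the injectivity of $g_{\vec{B}}$ restricted to $D$ gives an honest bijection between the relevant digit alphabets, so periodicity transfers in both directions; the expansion-uniqueness issues are defused at the outset by the hypothesis that $x$ is irrational (killing $N$-adic ambiguity) and by the Lemma \ref{RatToRat}-style observation that the only $\|\vec{B}\|$-adic ambiguity is a trailing-constant tail, which does not affect the rational/irrational dichotomy. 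A secondary point to get right is the claim that $x \in C_{\vec{B}} \iff$ every $N$-adic digit of $x$ lies in $D$, which I would justify directly from $C_{\vec{B}} = \bigcap_{n\ge1}(\phi_D)^n([0,1])$ and the formula $\phi_{d_1}\circ\cdots\circ\phi_{d_n}([0,1]) = \big[\sum_{i}\frac{d_i}{N^i},\ \sum_i \frac{d_i}{N^i} + \frac{1}{N^n}\big]$.
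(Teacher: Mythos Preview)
Your proposal is correct and follows essentially the same route as the paper's proof: both directions hinge on Proposition~\ref{NtoDrel}, the characterization of $C_{\vec{B}}$ via $N$-adic digits lying in $D$, the injectivity of $g_{\vec{B}}|_D$ to transfer (a)periodicity between $\{n_i\}$ and $\{g_{\vec{B}}(n_i)\}$, and the truncation argument for $x\notin C_{\vec{B}}$. Your write-up is in fact more scrupulous than the paper's about the two sources of expansion non-uniqueness; the paper simply asserts that a never-periodic base-$\|\vec{B}\|$ digit string yields an irrational number without pausing over the trailing-$(\|\vec{B}\|-1)$ issue you flag.
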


\begin{proof}
Let $\vec{B}=(b_0,...,b_{N-1})$ be the binary representation of $F_{\vec{B}}$.\\ 
Suppose $x\in{\mathbb{Q}^c}\cap [0,1]$.
Since $x\in(0,1)$ it follows $x=\sum_{i=1}^\infty\frac{n_i}{N^i}$, for some $\{n_i\}_{i=1}^\infty$. Further, since $x$ is irrational, $\{n_i\}_{i=1}^\infty$ is never periodic. By Proposition \ref{NtoDrel}, $F_{\vec{B}}(x)=\sum_{i=1}^\infty\left(\prod_{k=1}^{i-1}b_{n_k}\right)\frac{g_{\vec{B}}({n_i})}{\|\vec{B}\|^i}$.\\ 
Suppose $x\in C_{\vec{B}}$. Since $x\in C_{\vec{B}}$, it follows $n_k\in D$, $b_{n_k}=1$, and $g(n_k)\in\{0,1,...,\|\vec{B}\|-1\}$ for all $k$. Then, $F_{\vec{B}}(x)=\sum_{i=1}^\infty\frac{g_{\vec{B}}({n_i})}{\|\vec{B}\|^i}$.\\
Note, $g(j+1)>g(j)$ whenever $j\in D$ implies $g_{\vec{B}}|_D$ is injective.
Then, since $\{n_i\}_{i=1}^\infty$ is never periodic and $n_i\in D$, it follows that $\{g_{\vec{B}}(n_i)\}_{i=1}^\infty$ is also never periodic. Then, $F_{\vec{B}}(x)$ is a never periodic decimal in base $\|\vec{B}\|$. Thus, $F_{\vec{B}}(x)\in\mathbb{Q}^c$.\\
Alternatively, suppose $x\not\in C_{\vec{B}}$. Then, there exists a smallest $K$ such that $n_K\not\in D$ and $b_{n_K}=0$. Then $\prod_{k=1}^{i-1}b_{n_k}=0$ if and only if $i>K$ and $F_{\vec{B}}(x)=\sum_{i=1}^K\frac{g_{\vec{B}}({n_i})}{\|\vec{B}\|^i}$. Thus, $F_{\vec{B}}(x)\in\mathbb{Q}$.
\end{proof}
\begin{cor}
If $x\not\in C_{\vec{B}}$, then $F_{\vec{B}}(x)\in\mathbb{Q}$.
\label{noCthenQ}
\end{cor}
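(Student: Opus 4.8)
The plan is to dispose of this by a short case analysis on the arithmetic nature of $x$, leaning entirely on the two preceding lemmas. First I would observe that if $x \notin [0,1]$ there is nothing to prove, since the extension convention makes $F_{\vec{B}}(x)$ equal to $0$ or $1$, both rational; so we may assume $x \in [0,1]$.

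If $x \in \mathbb{Q} \cap [0,1]$, then Lemma \ref{RatToRat} immediately gives $F_{\vec{B}}(x) \in \mathbb{Q}$. Note that this case does not even use the hypothesis $x \notin C_{\vec{B}}$; it is just the statement that the CDF sends rationals to rationals.

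If instead $x \in \mathbb{Q}^c \cap [0,1]$, then I would invoke Lemma \ref{IrrToIrr}: since $x \notin C_{\vec{B}}$, the ``only if'' direction of that equivalence, read contrapositively, forces $F_{\vec{B}}(x) \notin \mathbb{Q}^c$, i.e.\ $F_{\vec{B}}(x) \in \mathbb{Q}$. Equivalently, one can simply quote the final paragraph of the proof of Lemma \ref{IrrToIrr}, where precisely this computation is carried out: letting $x = \sum_{i\geq 1} n_i/N^i$ and taking $K$ to be the least index with $n_K \notin D$ (so $b_{n_K}=0$), the product $\prod_{k=1}^{i-1} b_{n_k}$ vanishes for all $i > K$, truncating the series of Proposition \ref{NtoDrel} to the finite sum $F_{\vec{B}}(x) = \sum_{i=1}^{K} \big(\prod_{k=1}^{i-1} b_{n_k}\big) g_{\vec{B}}(n_i)/\|\vec{B}\|^{i}$, which is manifestly rational.

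Combining the two cases completes the proof. There is no real obstacle here; the only points requiring a little care are keeping the domain restriction $x \in [0,1]$ explicit so that Lemmas \ref{RatToRat} and \ref{IrrToIrr} literally apply, and noting that the existence of the index $K$ in the irrational case is guaranteed exactly by the failure $x \notin C_{\vec{B}}$.
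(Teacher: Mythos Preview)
Your proof is correct and follows essentially the same approach as the paper: a case split on whether $x$ is rational or irrational, invoking Lemma~\ref{RatToRat} in the first case and Lemma~\ref{IrrToIrr} in the second. Your version is simply more carefully written, making explicit the trivial case $x\notin[0,1]$ and spelling out the contrapositive reading of Lemma~\ref{IrrToIrr}.
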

\begin{proof}
Let $x\not\in C_{\vec{B}}$. If $x\in\mathbb{Q}$, by Lemma \ref{RatToRat}, $x\in\mathbb{Q}$. If $x\in\mathbb{Q}^c$, by Lemma \ref{IrrToIrr}, $x\in\mathbb{Q}$.
\end{proof}
\subsubsection{Multiplicatively Dependent Scale Factors}
\begin{lemma}
Let $F_{\vec{B}_1}$ be a CDF with scale factor $N^L$ and $F_{\vec{B}_2}$ be a CDF with scale factor $N^M$, for $L,M,N\in\mathbb{N}$. If $\vec{B}_1\otimes \vec{B}_2=\vec{B}_2\otimes \vec{B}_1$, then $F_{\vec{B}_1}=F_{\vec{B}_2}$.
\label{commutativekronecker}
\end{lemma}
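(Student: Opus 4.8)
The plan is to show that the hypothesis $\vec{B}_1\otimes\vec{B}_2=\vec{B}_2\otimes\vec{B}_1$ forces the two CDFs to agree on a dense subset of $[0,1]$, and then invoke continuity. The natural dense set to use is the set of $N$-adic rationals, since both scale factors $N^L$ and $N^M$ are powers of $N$; equivalently, by Corollary \ref{ISFcor3} we may replace $\vec{B}_1$ by $\vec{B}_1^{\otimes M}$ and $\vec{B}_2$ by $\vec{B}_2^{\otimes L}$ so that both CDFs have the \emph{same} scale factor $N^{LM}$. Once the scale factors are literally equal, $F_{\vec{B}_1}=F_{\vec{B}_2}$ iff their cumulative digit functions coincide (by Lemma \ref{fixedpoints} and Theorem \ref{uniqueDet}-style reasoning), and in fact iff $\vec{B}_1^{\otimes M}=\vec{B}_2^{\otimes L}$ as binary digit vectors.

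So the first step is to reduce to the statement: if $\vec{B}_1\otimes\vec{B}_2=\vec{B}_2\otimes\vec{B}_1$, then $\vec{B}_1^{\otimes M}=\vec{B}_2^{\otimes L}$. Second, I would unpack the commutativity hypothesis coordinatewise using the formula $(\vec{B}\otimes\vec{C})(n+mN')=b_mc_n$: writing indices in mixed radix, $\vec{B}_1\otimes\vec{B}_2=\vec{B}_2\otimes\vec{B}_1$ says that for all valid $m_1,n_2$ and $m_2,n_1$ lining up to the same index, the products of the relevant coordinates agree. The cleanest way to exploit this is to use the invariance/self-similarity structure: apply Corollary \ref{ISFcor3} to note $F_{\vec{B}_1}=F_{\vec{B}_1\otimes\vec{B}_2}$ would hold if $\vec{B}_1\otimes\vec{B}_2$ were a power of $\vec{B}_1$ — but more to the point, both $\vec{B}_1\otimes\vec{B}_2$ and $\vec{B}_2\otimes\vec{B}_1$ have scale factor $N^{L+M}$, and the equality of these two vectors combined with the explicit Kronecker formula for cumulative digit functions (Lemma \ref{kroneckerg}) lets me compute $F_{\vec{B}_1\otimes\vec{B}_2}$ and $F_{\vec{B}_2\otimes\vec{B}_1}$ at $N^{L+M}$-adic points and equate them.

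Concretely, the third step is to use Proposition \ref{NtoDrel}: for $x=\sum_i n_i/ (N^L)^i$ we have $F_{\vec{B}_1}(x)=\sum_i\big(\prod_{k<i}(\vec{B}_1)_{n_k}\big)g_{\vec{B}_1}(n_i)/\|\vec{B}_1\|^i$, and similarly for $\vec{B}_2$ with base $N^M$. I would choose a common base $N^{LM}$ for both expansions (legitimate by Corollary \ref{ISFcor3}), so that a single $N^{LM}$-adic digit string feeds both series. Then the equality $F_{\vec{B}_1^{\otimes M}\otimes\vec{B}_2^{\otimes L}}=F_{\vec{B}_2^{\otimes L}\otimes\vec{B}_1^{\otimes M}}$ (which is the hypothesis, after raising to appropriate tensor powers and using that $\otimes$ on the already-equalized vectors is literal equality) together with Lemma \ref{kroneckerg} gives, for every digit block, $\|\vec{B}_2^{\otimes L}\|\,g_{\vec{B}_1^{\otimes M}}(k)+(\vec{B}_1^{\otimes M})_k\,g_{\vec{B}_2^{\otimes L}}(j) = \|\vec{B}_1^{\otimes M}\|\,g_{\vec{B}_2^{\otimes L}}(k')+(\vec{B}_2^{\otimes L})_{k'}\,g_{\vec{B}_1^{\otimes M}}(j')$ for the appropriate reindexing. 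Choosing $j=j'=0$ isolates $g_{\vec{B}_1^{\otimes M}}$ as a scalar multiple of $g_{\vec{B}_2^{\otimes L}}$, and since both are cumulative digit functions with steps in $\{0,1\}$ (Proposition on cumulative digit functions), the scalar must be $1$, forcing $\vec{B}_1^{\otimes M}=\vec{B}_2^{\otimes L}$ and hence $F_{\vec{B}_1}=F_{\vec{B}_2}$.

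The main obstacle I anticipate is bookkeeping the mixed-radix indexing when passing between the two natural bases $N^L$ and $N^M$ and their common refinement $N^{LM}$ — making sure that "the same point $x$" really does correspond to compatible digit strings in both systems, and that the Kronecker identity of Lemma \ref{kroneckerg} is applied with the right factor on the left versus the right. A cleaner alternative that sidesteps some of this: observe that $\vec{B}_1\otimes\vec{B}_2=\vec{B}_2\otimes\vec{B}_1$ already implies, by repeatedly tensoring, that $\vec{B}_1^{\otimes M}\otimes\vec{B}_2^{\otimes L}=\vec{B}_2^{\otimes L}\otimes\vec{B}_1^{\otimes M}$, and now both tensor factors have the common scale factor $N^{LM}$, so the problem is genuinely the special case "if $\vec{C}\otimes\vec{D}=\vec{D}\otimes\vec{C}$ with $\vec{C},\vec{D}$ of equal scale factor $P$, then $\vec{C}=\vec{D}$." That last claim follows directly from Lemma \ref{kroneckerg} with $j=0$: $g_{\vec{C}\otimes\vec{D}}(kP)=\|\vec{D}\|g_{\vec{C}}(k)$ while $g_{\vec{D}\otimes\vec{C}}(kP)=\|\vec{C}\|g_{\vec{D}}(k)$, so $\|\vec{D}\|g_{\vec{C}}=\|\vec{C}\|g_{\vec{D}}$ on $\{0,\dots,P\}$; evaluating at $k=P$ gives $\|\vec{D}\|\|\vec{C}\|=\|\vec{C}\|\|\vec{D}\|$ (no info), but the unit-step property of cumulative digit functions forces the proportionality constant $\|\vec{D}\|/\|\vec{C}\|$ to be $1$, whence $g_{\vec{C}}=g_{\vec{D}}$ and thus $\vec{C}=\vec{D}$, giving $F_{\vec{B}_1}=F_{\vec{C}}=F_{\vec{D}}=F_{\vec{B}_2}$ via Corollary \ref{ISFcor3}.
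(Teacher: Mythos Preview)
Your proposal is correct, and its overall architecture matches the paper's: pass to the common scale factor $N^{LM}$ via Corollary~\ref{ISFcor3} (replacing $\vec{B}_1,\vec{B}_2$ by $\vec{B}_1^{\otimes M},\vec{B}_2^{\otimes L}$), lift the commutativity hypothesis to these tensor powers by the obvious induction, and then deduce that the two equal-length vectors coincide. The difference lies in the final step. The paper invokes an external matrix fact (Theorem~24 of \cite{Bro06a}): commuting Kronecker products of row vectors must be scalar multiples, and since the entries are $\{0,1\}$ with nonzero norm, the scalar is $1$. You instead stay entirely inside the paper's toolkit, applying Lemma~\ref{kroneckerg} at $j=0$ to get $\|\vec{D}\|\,g_{\vec{C}}(k)=\|\vec{C}\|\,g_{\vec{D}}(k)$ for all $k$, and then using the unit-step property of cumulative digit functions (Proposition on cumulative digit functions) to force $\|\vec{C}\|=\|\vec{D}\|$ and hence $g_{\vec{C}}=g_{\vec{D}}$. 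Your route is more elementary and self-contained; the paper's is quicker once the citation is granted but imports outside machinery. The exploratory middle portion of your write-up (via Proposition~\ref{NtoDrel}) is unnecessary once you have the cleaner argument at the end, and can be dropped.
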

\begin{proof}
We first note that the Kronecker product is associative. Let $\vec{B}_1\otimes \vec{B}_2=\vec{B}_2\otimes \vec{B}_1$.\\
By Corollary \ref{ISFcor3}, $F_{\vec{B}_1}=F_{\vec{B}_1^{\otimes L}}$ and $F_{\vec{B}_2}=F_{\vec{B}_2^{\otimes M}}$. Then, $\vec{B}_1^{\otimes L}$ and $\vec{B}_2^{\otimes M}$ have length $N^{LM}$.\\
We will show $\vec{B}_1^{\otimes L}\otimes\vec{B}_2^{\otimes M}=\vec{B}_2^{\otimes M}\otimes\vec{B}_1^{\otimes L}$, by first showing $\vec{B}_1^{\otimes L}\otimes \vec{B}_2= \vec{B}_2\otimes
\vec{B}_1^{\otimes L}$ by inducting on $L$.\\
As the base case, when $L=1$, $\vec{B}_1^{\otimes 1}\otimes \vec{B}_2=\vec{B}_1\otimes \vec{B}_2=\vec{B}_2\otimes \vec{B}_1=\vec{B}_2\otimes\vec{B}_1^{\otimes 1}$.\\
Now assume $\vec{B}_1^{\otimes L}\otimes \vec{B}_2=\vec{B}_2\otimes
\vec{B}_1^{\otimes L}$. Then, 
\begin{align*}
\vec{B}_1^{\otimes L+1}\otimes \vec{B}_2&=\vec{B}_1\otimes
\vec{B}_1^{\otimes L}\otimes \vec{B}_2=\vec{B}_1\otimes \vec{B}_2\otimes \vec{B}_1^{\otimes L}\\
&=\vec{B}_2\otimes
\vec{B}_1\otimes\vec{B}_1^{\otimes L}=\vec{B}_2\otimes\vec{B}_1^{\otimes L+1}.
\end{align*}
This proves $\vec{B}_1^{\otimes L}\otimes \vec{B}_2= \vec{B}_2\otimes
\vec{B}_1^{\otimes L}$.\\
Now we will induct on $M$. For the base case, when $M=1$,
$\vec{B}_1^{\otimes L}\otimes \vec{B}_2=\vec{B}_2\otimes\vec{B}_1^{\otimes L}$.\\
Now assume $\vec{B}_1^{\otimes L}\otimes \vec{B}_2^{\otimes M}=\vec{B}_2^{\otimes M}\otimes\vec{B}_1^{\otimes L}$. Then, 
\begin{align*}
    \vec{B}_2^{\otimes M+1}\otimes\vec{B}_1^{\otimes L}&=\vec{B}_2\otimes \vec{B}_2^{\otimes M}\otimes \vec{B}_1^{\otimes L}=\vec{B}_2\otimes \vec{B}_1^{\otimes L}\otimes \vec{B}_2^{\otimes M}\\
    &=\vec{B}_1^{\otimes L}\otimes \vec{B}_2\otimes \vec{B}_2^{\otimes M}=\vec{B}_1^{\otimes L}\otimes \vec{B}_2^{\otimes M+1}.
\end{align*}
By induction, $\vec{B}_1^{\otimes L}\otimes\vec{B}_2^{\otimes M}=\vec{B}_2^{\otimes M}\otimes\vec{B}_1^{\otimes L}$.
Since $\vec{B}_1^{\otimes L}$ and $\vec{B}_2^{\otimes M}$ have length $N^{LM}$,  $\vec{B}_1^{\otimes L}$ and $\vec{B}_2^{\otimes M}$ can be represented as $N^{LM}$ long row vectors. This gives an equivalent definition of the Kronecker product on matrices. 
Since
$\vec{B}_1^{\otimes L}\otimes\vec{B}_2^{\otimes M}=\vec{B}_2^{\otimes M}\otimes\vec{B}_1^{\otimes L}$, either
 $\vec{B}_1^{\otimes L}=c\vec{B}_2^{\otimes M}$ or $\vec{B}_2^{\otimes M}=c\vec{B}_1^{\otimes L}$, for some $c\in \mathbb{Z}_2$ (see Theorem 24 of \cite{Bro06a}).
If $c=0$, this implies $\vec{B}_1=0$ or $\vec{B}_2=0$, which is a contradiction.
Therefore, $c=1$, and  \[\vec{B}_1^{\otimes L}=\vec{B}_2^{\otimes M}.\]
Thus,  \[F_{\vec{B}_1}=F_{\vec{B}_1^{\otimes L}}=F_{\vec{B}_2^{\otimes M}}=F_{\vec{B}_2}.\] 

\end{proof}
\begin{lemma}
Let $\vec{A}$ have scale factor $N$, and $\vec{B}$, and $\vec{C}$ both have scale factor $M$. If $\vec{A}\otimes \vec{B}=\vec{A}\otimes \vec{C}$, then $\vec{B}=\vec{C}$.

\label{invCommute}
\end{lemma}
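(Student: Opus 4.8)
The plan is to prove this as a straightforward cancellation property for the Kronecker product of binary digit vectors, working directly from the definition of the Kronecker product of vectors rather than invoking any measure-theoretic or CDF-level machinery. The key observation is that the statement is purely combinatorial: if $\vec{A} = (a_0,\dots,a_{N-1})$ and $\vec{B} = (b_0,\dots,b_{M-1})$, $\vec{C} = (c_0,\dots,c_{M-1})$, then by definition $(\vec{A}\otimes\vec{B})(n + mM) = a_m b_n$ and $(\vec{A}\otimes\vec{C})(n+mM) = a_m c_n$ for $n \in \{0,\dots,M-1\}$, $m \in \{0,\dots,N-1\}$.

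First I would fix an index $n \in \{0,\dots,M-1\}$ and aim to show $b_n = c_n$. Since $\vec{A}$ is a nondegenerate digit vector, $\|\vec{A}\| \geq 2 > 0$, so there exists some $m_0 \in \{0,\dots,N-1\}$ with $a_{m_0} = 1$. Evaluating the hypothesis $\vec{A}\otimes\vec{B} = \vec{A}\otimes\vec{C}$ at the position $n + m_0 M$ gives $a_{m_0} b_n = a_{m_0} c_n$, i.e. $1\cdot b_n = 1 \cdot c_n$, hence $b_n = c_n$. Since $n$ was arbitrary, $\vec{B} = \vec{C}$. I would state explicitly at the start that $\vec{A}$, being a digit vector of a legitimate Cantor set, satisfies $\|\vec{A}\| \geq 2$, which is what guarantees the existence of such an $m_0$; this is the only place the nondegeneracy hypothesis enters, and it is essential (if $\vec{A}$ were the zero vector the conclusion would fail).

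There is no real obstacle here — the proof is a one-line index-chasing argument once the defining formula for the Kronecker product is written down. The only point requiring a sentence of care is verifying that every index of $\vec{B}$ (equivalently of $\vec{C}$) is "seen" by the product at a position where the corresponding coordinate of $\vec{A}$ is $1$, which is exactly the content of choosing $m_0$ with $a_{m_0}=1$ and ranging $n$ over all of $\{0,\dots,M-1\}$. Optionally I would remark that this lemma is the $\otimes$-analogue of the fact that a nonzero scalar can be cancelled, and note for later use (e.g. in establishing injectivity statements about CDFs under Kronecker factorization) that combined with Corollary \ref{ISFcor3} it shows $F_{\vec{A}\otimes\vec{B}} = F_{\vec{A}\otimes\vec{C}}$ forces $\vec{B} = \vec{C}$ whenever $\vec{A}$ is fixed, since distinct digit vectors of the same scale factor and weight yield distinct CDFs.
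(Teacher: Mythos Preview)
Your proof is correct and is essentially the same as the paper's: both arguments fix an index where $\vec{A}$ is nonzero (using that $\vec{A}$ is a valid, hence nonzero, digit vector) and cancel to conclude $b_n=c_n$ for every $n$. The paper's version is slightly terser and invokes only $\vec{A}\neq 0$ rather than $\|\vec{A}\|\geq 2$, but the argument is identical.
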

\begin{proof}
Let $\vec{A}=(a_0,...,a_{N-1})$, $\vec{B}=(b_0,...,b_{M-1})$, and $\vec{C}=(c_0,...,c_{M-1})$.
From $\vec{A}\otimes \vec{B}=\vec{A}\otimes \vec{C}$, it follows $a_ib_j=a_ic_j$ $\forall i,j$ such that $0\leq i\leq N-1$, $0\leq j\leq M-1$. Since $\vec{A}$ is a valid binary representation, $\vec{A}\neq 0$ so $\exists I$ such that $a_I\neq 0$. Then, $a_Ib_j=a_Ic_j$ $\forall j\in \{0,...,M-1\}$, and $b_j=c_j$ $\forall j\in \{0,...,M-1\}$. Thus, $\vec{B}=\vec{C}$.
\end{proof}

\begin{proposition}
Let $L,M,N\in\mathbb{N}$. Let $S=\{\frac{m}{N^{L+M}}\}_{m=1}^{N^{L+M}-1}$. Let $\vec{B}_L$ be a binary vector of a CDF with length $N^L$ and $\vec{B}_M$ be a binary vector of a CDF with length $N^M$. Then, $F_{\vec{B}_L}(x)=F_{\vec{B}_M}(x)$ for all $x\in S$ if and only if $F_{\vec{B}_L}=F_{\vec{B}_M}$.
\label{depFix}
\end{proposition}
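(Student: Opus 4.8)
The reverse implication is immediate, so the plan is to prove the forward direction. Assume $F_{\vec{B}_L}$ and $F_{\vec{B}_M}$ agree on $S$. Since also $F_{\vec{B}_L}(0)=F_{\vec{B}_M}(0)=0$ and $F_{\vec{B}_L}(1)=F_{\vec{B}_M}(1)=1$, they agree at every point $m/N^{L+M}$ with $0\le m\le N^{L+M}$. The strategy is to show that this forces the combinatorial identity $\vec{B}_L\otimes\vec{B}_M=\vec{B}_M\otimes\vec{B}_L$, after which Lemma \ref{commutativekronecker} (which applies precisely because $\vec{B}_L$ has scale factor $N^L$ and $\vec{B}_M$ has scale factor $N^M$) immediately gives $F_{\vec{B}_L}=F_{\vec{B}_M}$.

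To obtain that identity, first observe that $\vec{B}_L\otimes\vec{B}_M$ and $\vec{B}_M\otimes\vec{B}_L$ are both binary digit vectors of length (scale factor) $N^{L+M}$, with the common norm $\|\vec{B}_L\otimes\vec{B}_M\|=\|\vec{B}_L\|\,\|\vec{B}_M\|=\|\vec{B}_M\otimes\vec{B}_L\|$. Hence, by Lemma \ref{fixedpoints}, the two vectors are equal as soon as the CDFs $F_{\vec{B}_L\otimes\vec{B}_M}$ and $F_{\vec{B}_M\otimes\vec{B}_L}$ take the same values at every $m/N^{L+M}$: equal values there force equal cumulative digit functions (the normalizations $\|\vec{B}_L\|\,\|\vec{B}_M\|$ cancel), and equal cumulative digit functions force equal digit vectors. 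So it suffices to identify $F_{\vec{B}_L\otimes\vec{B}_M}$ with $F_{\vec{B}_L}$, and $F_{\vec{B}_M\otimes\vec{B}_L}$ with $F_{\vec{B}_M}$, on the grid $\{m/N^{L+M}\}$.

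The heart of the argument is the following computation. Fix $m$ with $0\le m<N^{L+M}$ and write $m=kN^M+j$ with $0\le j<N^M$ (so $0\le k<N^L$). Applying the invariance equation (Lemma \ref{invariance}) to $F_{\vec{B}_L}$, whose scale factor is $N^L$, and noting that all but one of the resulting arguments $m/N^M-n$ lie outside $(0,1)$, yields $F_{\vec{B}_L}(m/N^{L+M})=\frac{g_{\vec{B}_L}(k)}{\|\vec{B}_L\|}+\frac{\vec{B}_L(k)}{\|\vec{B}_L\|}F_{\vec{B}_L}(j/N^M)$. But $j/N^M=(jN^L)/N^{L+M}\in S\cup\{0\}$, so the hypothesis and Lemma \ref{fixedpoints} let me replace $F_{\vec{B}_L}(j/N^M)$ by $F_{\vec{B}_M}(j/N^M)=g_{\vec{B}_M}(j)/\|\vec{B}_M\|$; substituting this back and invoking Lemma \ref{kroneckerg} collapses the expression to $g_{\vec{B}_L\otimes\vec{B}_M}(m)/(\|\vec{B}_L\|\,\|\vec{B}_M\|)=F_{\vec{B}_L\otimes\vec{B}_M}(m/N^{L+M})$. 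The symmetric computation --- writing $m=qN^L+j'$ with $0\le j'<N^L$, applying invariance to $F_{\vec{B}_M}$ at scale $N^M$, and using that $j'/N^L=(j'N^M)/N^{L+M}\in S\cup\{0\}$ --- gives $F_{\vec{B}_M}(m/N^{L+M})=F_{\vec{B}_M\otimes\vec{B}_L}(m/N^{L+M})$. Since the hypothesis equates the two left-hand sides, $F_{\vec{B}_L\otimes\vec{B}_M}$ and $F_{\vec{B}_M\otimes\vec{B}_L}$ agree on the whole grid (the point $m=N^{L+M}$, i.e. $x=1$, being trivial), so $\vec{B}_L\otimes\vec{B}_M=\vec{B}_M\otimes\vec{B}_L$, and Lemma \ref{commutativekronecker} completes the proof.

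The one genuinely non-mechanical step is the choice of viewpoint: $F_{\vec{B}_L}$ and $F_{\vec{B}_M}$ are self-similar at the incompatible scales $N^L$ and $N^M$, and the correct common refinement to look at is the product scale $N^{L+M}$, where the invariance equation rewrites each CDF's grid values purely in terms of values on the coarser grids of $N^M$-adic and $N^L$-adic points --- and those coarser grids sit inside $S$. Recognizing this converts the abstract "sampling on $S$'' hypothesis into the concrete identity $\vec{B}_L\otimes\vec{B}_M=\vec{B}_M\otimes\vec{B}_L$; everything after that is routine bookkeeping with Lemmas \ref{invariance}, \ref{fixedpoints}, \ref{kroneckerg}, and \ref{commutativekronecker}. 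I expect the only delicate points to be keeping the two scalings $N^L$ and $N^M$ straight and handling the endpoints separately.
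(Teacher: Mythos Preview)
Your proposal is correct and follows essentially the same route as the paper: show that $F_{\vec{B}_L\otimes\vec{B}_M}$ agrees with $F_{\vec{B}_L}$ on the grid $S$ (and symmetrically for $\vec{B}_M\otimes\vec{B}_L$ and $F_{\vec{B}_M}$), deduce $\vec{B}_L\otimes\vec{B}_M=\vec{B}_M\otimes\vec{B}_L$ from Theorem~\ref{uniqueDet}/Lemma~\ref{fixedpoints}, and conclude via Lemma~\ref{commutativekronecker}. The only cosmetic difference is that the paper splits into the cases $b_k=0$ (flat interval) and $b_k=1$ (self-similar piece) before invoking Lemma~\ref{kroneckerg}, whereas you handle both at once through the invariance equation; this is a mild streamlining, not a different idea.
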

\begin{proof}
Let $\vec{B}_L=(b_0,...,b_{N^L-1})$ and $\vec{B}_M=(c_0,...,c_{N^M-1})$. Let $g_L$ be the cumulative digit function for $\vec{B}_L$ 
and $g_M$ be the cumulative digit function for $\vec{B}_M$.\\
If $F_{\vec{B}_L}=F_{\vec{B}_M}$, clearly $F_{\vec{B}_L}(x)=F_{\vec{B}_M}(x)$ when $x\in S$.\\
Suppose that $F_{\vec{B}_L}(x)=F_{\vec{B}_M}(x)$ for all $x\in S$.\\
Let $F_{\vec{B}_L\otimes \vec{B}_M}$ be the CDF for $\vec{B}_L\otimes \vec{B}_M$,  and $g_{LM}$ be the cumulative digit function. Therefore, 
\[ {\|\vec{B}_L\otimes \vec{B}_M\|={\|\vec{B}_L\|} {\|\vec{B}_M\|}}. \]
Let $k\in\{0,...,N^L-1\}$.
By Lemma \ref{kroneckerg}, $g_{LM}(kN^M)=g_{L}(k){\|\vec{B}_M\|}$. 
\\
It follows from Proposition \ref{fixedpoints} that \[F_{\vec{B}_L\otimes \vec{B}_M}\left(\frac{k}{N^L}\right)=F_{\vec{B}_L\otimes \vec{B}_M}\left(\frac{kN^M}{N^{L+M}}\right)=\frac{g_{LM}(kN^M)}{{\|\vec{B}_L\|}{\|\vec{B}_M\|}}=\frac{g_L(k){\|\vec{B}_M\|}}{{\|\vec{B}_L\|}{\|\vec{B}_M\|}}=\frac{g_L(k)}{{\|\vec{B}_L\|}}=F_{\vec{B}_L}\left(\frac{k}{N^L}\right)=F_{\vec{B}_M}\left(\frac{k}{N^L}\right)\] since $\frac{k}{N^L}\in S$.\\
Then, if $F_{\vec{B}_L}\left(\frac{k}{N^L}\right)=F_{\vec{B}_L}\left(\frac{k+1}{N^L}\right)$, since all CDFs are increasing functions, $F_{\vec{B}_L\otimes \vec{B}_M}(x)=F_{\vec{B}_L}(x)=F_{\vec{B}_M}(x)$ for all $x\in \left[\frac{k}{N^L},\frac{k+1}{N^L}\right]$.\\
Next, suppose $F_{\vec{B}_L}\left(\frac{k}{N^L}\right)<F_{\vec{B}_L}\left(\frac{k+1}{N^L}\right)$. By Theorem \ref{basicCDF}, $b_k=1$, so by Lemma \ref{kroneckerg}, for $j<N^M$,
$g_{LM}(kN^M+j)=g_L(k){\|\vec{B}_M\|}+g_M(j)$. Also, $F_{\vec{B}_L}$ is self-similar on the interval $\left[\frac{k}{N^L},\frac{k+1}{N^L}\right]$. Let $x\in S\cap\left(\frac{k}{N^L},\frac{k+1}{N^L}\right)$. Then, $x=\frac{k}{N^L}+\frac{j}{N^{L+M}}$ for $j\in\{1,...,N^M-1\}$. It follows \[F_{\vec{B}_L}(x)=F_{\vec{B}_L}\left(\frac{k}{N^L}\right)+\frac{1}{{\|\vec{B}_L\|}}F_{\vec{B}_L}\left(N^L\frac{j}{N^{L+M}}\right)=F_{\vec{B}_L}\left(\frac{k}{N^L}\right)+\frac{1}{{\|\vec{B}_L\|}}F_{\vec{B}_L}\left(\frac{j}{N^{M}}\right).\] Since $\frac{j}{N^M}\in S$ and by Proposition \ref{fixedpoints},  \[F_{\vec{B}_L}(x)=F_{\vec{B}_L}\left(\frac{k}{N^L}\right)+\frac{1}{{\|\vec{B}_L\|}}F_{\vec{B}_M}\left(\frac{j}{N^M}\right)=\frac{g_L(k)}{{\|\vec{B}_L\|}}+\frac{1}{{\|\vec{B}_L\|}}\cdot \frac{g_M(j)}{{\|\vec{B}_M\|}}=\frac{g_L(k){\|\vec{B}_M\|}+g_M(j)}{{\|\vec{B}_L\|} {\|\vec{B}_M\|}}.\]\\
Next, by Proposition \ref{fixedpoints}, \[F_{\vec{B}_L\otimes \vec{B}_M}(x)=F_{\vec{B}_L\otimes \vec{B}_M}\left(\frac{k}{N^L}+\frac{j}{N^{L+M}}\right)=\frac{g_{LM}(kN^{M}+j)}{{\|\vec{B}_L\|}{\|\vec{B}_M\|}}=\frac{g_L(k){\|\vec{B}_M\|}+g_M(j)}{{\|\vec{B}_L\|}{\|\vec{B}_M\|}}=F_{\vec{B}_L}(x).\]\\

Therefore, $F_{\vec{B}_L\otimes \vec{B}_M}(x)=F_{\vec{B}_L}(x)$ for all $x\in S$.\\

Further, by switching $L$ and $M$ above, $F_{\vec{B}_M\otimes \vec{B}_L}(x)=F_{\vec{B}_M}(x)$ for all $x\in S$. However, $F_{\vec{B}_M}(x)=F_{\vec{B}_L}(x)$ for all $x\in S$. Therefore, $F_{\vec{B}_M\otimes \vec{B}_L}(x)=F_{\vec{B}_L\otimes \vec{B}_M}(x)$ for all $x\in S$, and both have scale factor $N^{L+M}$. By Corollary \ref{uniqueDet}, $\vec{B}_M\otimes \vec{B}_L=\vec{B}_L\otimes \vec{B}_M$. It follows by Lemma \ref{commutativekronecker}, $F_{\vec{B}_M}=F_{\vec{B}_L}$.
\end{proof}
\subsubsection{Almost nowhere intersection of Cantor Sets}
\begin{lemma}
Let $h,M,N \in \mathbb{N}$ such that $M\not\sim N$. Then, for all $L\in\mathbb{N}$, there exists a constant $\alpha(M,N)\in(0,1)$ dependent on $M$ and $N$ such that
\[\sum_{n=0}^{L-1}\prod_{k=1}^{\infty}|\cos(N^{-k}hM^n\pi)|\leq 2L^{1-\alpha(M,N)}.\]
\label{borrowedLemma}
\end{lemma}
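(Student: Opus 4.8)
The inequality is an $L^1$-type estimate on a sum of infinite products of cosines, and the natural route is to exploit multiplicative independence of $M$ and $N$ to show that, as $n$ ranges over $\{0,\dots,L-1\}$, the product $\prod_{k\ge 1}|\cos(N^{-k}hM^n\pi)|$ is small for all but a polynomially-sparse set of $n$. The plan is to first recall the elementary identity $\prod_{k=1}^{\infty}|\cos(2^{-k}\theta)| = \left|\frac{\sin\theta}{\theta}\right|$ and, more to the point, the $N$-adic analogue: for an integer $h$, $\prod_{k=1}^{\infty}\cos(N^{-k} h \pi)$ relates to whether $h$ has a terminating base-$N$ expansion; in fact each factor $|\cos(N^{-k} h M^n \pi)|$ is strictly less than $1$ precisely when $N^{-k} h M^n$ is not an integer, i.e. when $N^k \nmid h M^n$. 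So I would quantify: if $hM^n$ is divisible by $N^j$ but not $N^{j+1}$, then the product has at least one factor of the form $|\cos(\text{something})|$ bounded away from $1$ for each $k>j$; more carefully, one extracts a bound of the shape $\prod_{k\ge 1}|\cos(N^{-k}hM^n\pi)| \le C\, \rho^{\,v(n)}$ where $v(n)$ measures how far $N^{\infty}$ divides $hM^n$ and $\rho<1$.

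The crux is then a counting estimate: because $M \not\sim N$, for each fixed $j$ the number of $n \in \{0,\dots,L-1\}$ with $N^j \mid h M^n$ grows much slower than $L$ — indeed, since $M$ and $N$ are multiplicatively independent, $N^j \mid M^n$ forces $n$ to be divisible by a quantity growing like $j$ (via looking at the exponent of each prime $p\mid\gcd(M,N)$ in $M^n$ versus $N^j$, and the fact that not all prime powers can be matched), so the ``good'' exponent $v(n)$ is typically of size $\Theta(\log_{?} n)$ rather than growing. Summing the geometric-type bound $\sum_{n<L} \rho^{v(n)}$ against this sparsity yields a bound of the form $L^{1-\alpha}$ for some $\alpha = \alpha(M,N) \in (0,1)$, with the constant $2$ absorbing the finitely many exceptional $n$ (such as $n=0$ if $N\mid h$, or small $n$) and the prefactor $C$. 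I would set $\alpha(M,N)$ explicitly in terms of $\log M / \log N$ and $\log \rho$, checking it lands strictly between $0$ and $1$.

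The main obstacle I anticipate is making the counting argument uniform in $h$ and getting the clean constant $2$ and a clean exponent $\alpha(M,N)$ independent of both $h$ and $L$: the interplay between the additive structure coming from the cosine product and the multiplicative structure of $hM^n \bmod N^k$ is delicate, and one must be careful that $h$ itself does not contribute arbitrarily many ``free'' factors of $N$ (it contributes at most $\log_N h = O(1)$ extra, which is fine but must be tracked). I suspect the cleanest path is actually to cite or adapt an existing estimate — the lemma is flagged as ``borrowed,'' suggesting it comes from the literature on normal numbers and uniform distribution (work of Feldman--Lacey, or Pollington, or the Davenport--Erdős--LeVeque circle of ideas), where sums of this exact type $\sum_{n<L}\prod_k|\cos(N^{-k}hM^n\pi)|$ appear in proving that $\mu_{\vec B}$-almost every point is normal to base $N$. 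So my actual recommendation is: state the reduction to the counting lemma cleanly, then invoke the known estimate with attribution, rather than reproving it from scratch.
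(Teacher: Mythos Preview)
Your final recommendation---to cite the existing estimate from the normal-numbers literature rather than reprove it---is exactly what the paper does: it invokes Lemma~5 of Schmidt (1962) as restated in Lemma~1 of Pollington (1988), which yields the bound with some exponent $\beta(M,N)>0$, and then simply sets $\alpha(M,N)=\min\{\beta(M,N),\tfrac12\}$ to force $\alpha\in(0,1)$. Your direct sketch is not needed (and note the heuristic $\prod_k|\cos(\cdot)|\le C\rho^{v(n)}$ has the monotonicity backwards---large $N$-adic valuation of $hM^n$ makes more leading factors equal to $1$ and hence the product \emph{larger}), but since you already land on the citation route this is immaterial.
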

\begin{proof}
From Lemma 5 of \cite{Sch62a}, translated in Lemma 1 of \cite{Pol88a}, there exists a constant $\beta(M,N)>0$ dependent upon $M$ and $N$ such that \[\sum_{n=0}^{L-1}\prod_{k=1}^{\infty}|\cos(N^{-k}hM^n\pi)|\leq 2L^{1-\beta(M,N)}.\] Since $f(x)=2L^{1-x}$ is a decreasing function, if it is true for $\beta(M,N)\geq 1$, then it must also be true for some $\alpha(M,N)<1$. Then, letting
\[ \alpha(M,N)=\begin{cases} 
      \beta(M,N) & \beta(M,N)<1 \\
      \frac{1}{2} & \beta(M,N)\geq 1 \\
   \end{cases},
\]
it follows
\[\sum_{n=0}^{L-1}\prod_{k=1}^{\infty}|\cos(N^{-k}hM^n\pi)|\leq 2L^{1-\alpha(M,N)}.\]
\end{proof}

\begin{lemma}
Let $N, t \in\mathbb{N}$. Let $D=\{\epsilon_0,...,\epsilon_{d-1}\}\subset\mathbb{Z}_N$ (and $d\geq 2$). Then for any $j\in\mathbb{N}$
\[\left|\frac{1}{d}\sum_{k=0}^{d-1}e^{2\pi i\frac{t}{N^j}\epsilon_k}\right| \leq \left|\cos\left({\pi t\frac{|\epsilon_{a}-\epsilon_{b}|}{N^j}}\right)\right|\]
where $\epsilon_a,\epsilon_b \in D$ are such that \[\left |e^{2\pi i\frac{t}{N^j}\epsilon_a}+e^{2\pi i\frac{t}{N^j}\epsilon_b}\right|=\max_{l,m\in\{0,...,d-1\}, l\not=m}\left |e^{2\pi i\frac{t}{N^j}\epsilon_l}+e^{2\pi i\frac{t}{N^j}\epsilon_m}\right|.\]
\label{keithLemma}
\end{lemma}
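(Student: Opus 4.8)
The plan is to reduce the bound on the average of exponentials to a statement about the modulus of a sum of two complex numbers on the unit circle, and then to exploit convexity. First, I would write $z_k = e^{2\pi i \frac{t}{N^j}\epsilon_k}$, so that each $z_k$ lies on the unit circle, and the quantity to be estimated is $\bigl|\frac{1}{d}\sum_{k=0}^{d-1} z_k\bigr|$. The key elementary observation is that the closed unit disk is convex, hence the average $\frac{1}{d}\sum_k z_k$ lies in the convex hull of $\{z_0,\dots,z_{d-1}\}$, which is a subset of the disk; but a sharper bound comes from pairing. Specifically, I would use the fact that for points on the unit circle, $\bigl|\frac{1}{d}\sum_{k} z_k\bigr|$ is bounded by the average of pairwise midpoint-moduli: one can verify $\bigl|\frac{1}{d}\sum_k z_k\bigr| \le \max_{l\neq m}\frac{|z_l + z_m|}{2}$, since the centroid of $d$ points lies in the convex hull of the set of pairwise midpoints $\{\frac{z_l+z_m}{2}\}$ (when $d \ge 2$), and the modulus is a convex function maximized at an extreme point.

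Next, I would compute $\frac{|z_a + z_b|}{2}$ explicitly. Writing $z_a = e^{2\pi i \theta_a}$, $z_b = e^{2\pi i \theta_b}$ with $\theta_a = \frac{t\epsilon_a}{N^j}$, $\theta_b = \frac{t\epsilon_b}{N^j}$, we have $|z_a + z_b| = |e^{2\pi i\theta_a} + e^{2\pi i\theta_b}| = 2\bigl|\cos\bigl(\pi(\theta_a - \theta_b)\bigr)\bigr| = 2\bigl|\cos\bigl(\pi t \frac{\epsilon_a - \epsilon_b}{N^j}\bigr)\bigr|$, and since cosine is even this equals $2\bigl|\cos\bigl(\pi t \frac{|\epsilon_a - \epsilon_b|}{N^j}\bigr)\bigr|$. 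Combining with the previous paragraph gives exactly the asserted inequality, where $\epsilon_a, \epsilon_b$ are chosen to maximize $|z_a + z_b|$ — equivalently to maximize the midpoint modulus, which is the condition stated in the lemma.

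The main obstacle is justifying the pairing inequality $\bigl|\frac{1}{d}\sum_k z_k\bigr| \le \max_{l \neq m}\frac{|z_l + z_m|}{2}$ cleanly. The cleanest route I can see: observe that $\frac{1}{d}\sum_{k=0}^{d-1} z_k$ can be rewritten as a convex combination of the pairwise averages $\frac{z_l + z_m}{2}$. Indeed, $\binom{d}{2}^{-1}\sum_{l < m}\frac{z_l+z_m}{2} = \frac{1}{d}\sum_k z_k$ since each $z_k$ appears in $d-1$ of the $\binom{d}{2}$ pairs and $\frac{d-1}{\binom{d}{2}}\cdot\frac{1}{2} = \frac{1}{d}$. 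Then by the triangle inequality applied to this convex combination, $\bigl|\frac{1}{d}\sum_k z_k\bigr| \le \binom{d}{2}^{-1}\sum_{l<m}\bigl|\frac{z_l+z_m}{2}\bigr| \le \max_{l \neq m}\bigl|\frac{z_l + z_m}{2}\bigr|$, and the right-hand maximum is attained at the pair $(\epsilon_a, \epsilon_b)$ specified in the statement. This requires $d \ge 2$ so that $\binom{d}{2} \ge 1$, which is exactly the hypothesis. Everything else is the routine half-angle identity for $|e^{i\alpha} + e^{i\beta}|$ and the evenness of cosine.
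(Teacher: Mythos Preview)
Your proposal is correct and follows essentially the same route as the paper: rewrite the average $\frac{1}{d}\sum_k z_k$ as a convex combination of pairwise sums (the paper uses the ordered-pair form $\frac{1}{2d(d-1)}\sum_k\sum_{n\neq k}(z_k+z_n)$, which is your identity with unordered pairs doubled), apply the triangle inequality, bound by the maximal pair, and finish with the half-angle identity $|e^{i\alpha}+e^{i\beta}| = 2|\cos((\alpha-\beta)/2)|$. The only difference is packaging---your convex-hull framing is unnecessary, but the core computation is identical.
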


\begin{proof}
    \begin{align*}
     \left|\frac{1}{d}\sum_{k=0}^{d-1}e^{2\pi it\frac{t}{N^j}\epsilon_k}\right|&=\left|\frac{1}{d}\cdot\frac{1}{2(d-1)}\sum_{k=0}^{d-1}\sum_{n\neq k}\bigg(e^{2\pi i\frac{t}{N^j}\epsilon_k}+e^{2\pi i \frac{t}{N^j}\epsilon_n}\bigg)\right| \\    &\leq\frac{1}{2d(d-1)}\left( \, \left|e^{2\pi i\frac{t}{N^j}\epsilon_0}+e^{2\pi i\frac{t}{N^j}\epsilon_1}\right|+...+\left|e^{2\pi i\frac{t}{N^j}\epsilon_{d-2}}+e^{2\pi i\frac{t}{N^j}\epsilon_{d-1}}\right|\,\right) \\
     &\leq\frac{1}{2d(d-1)}(d(d-1)) \left|e^{2\pi i\frac{t}{N^j}\epsilon_{a}}+e^{2\pi i\frac{t}{N^j}\epsilon_{b}}\right| \\
     &=\frac{1}{2}\cdot\left|e^{2 \pi i \frac{t}{N^j}\left(\frac{\epsilon_{a}+\epsilon_{b}}{2}\right)}\right|\left|e^{2 \pi i \frac{t}{N^j}\left(\frac{\epsilon_{a}-\epsilon_{b}}{2}\right)}+e^{2 \pi i \frac{t}{N^j}\left(\frac{-(\epsilon_{a}-\epsilon_{b})}{2}\right)}\right|\\
     &=1\cdot\left|\cos\left({2\pi \frac{t}{N^j}\frac{\epsilon_{a}-\epsilon_{b}}{2}}\right)\right|= \left|\cos\left(\pi t \frac{|\epsilon_a-\epsilon_b|}{N^j}\right)\right|
    \end{align*}  
\end{proof}

\begin{lemma}
Let $C_{\vec{B}}$ be a Cantor set with scale factor $N$ and digit set $D=\{\epsilon_0,...,\epsilon_{d-1}\}$. Let $M,L,h \in\mathbb{N}$ with $M>1$ and $M\not\sim N$. Let $\alpha(d,M)$ be defined as in Lemma \ref{borrowedLemma} and let $\delta=\frac{\alpha(d,M)}{3}$. Then the set of $x\in C_{\vec{B}}$ such that \[\left|\sum_{n=0}^{L-1}e(h M^n x)\right|\geq L^{1-\delta}\] has $\mu_{\vec{B}}$-measure of at most $6L^{-\delta}$.
\label{upperBoundMu}
\end{lemma}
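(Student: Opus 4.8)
The plan is to estimate the $L^{2}(\mu_{\vec{B}})$-norm of the exponential sum $S_{L}(x) = \sum_{n=0}^{L-1} e(hM^{n}x)$ and then apply Chebyshev's inequality. First I would expand
\[
\int_{C_{\vec{B}}} |S_{L}(x)|^{2}\, d\mu_{\vec{B}}(x) = \sum_{m,n=0}^{L-1} \int_{C_{\vec{B}}} e\big(h(M^{m}-M^{n})x\big)\, d\mu_{\vec{B}}(x) = \sum_{m,n=0}^{L-1} \widehat{\mu_{\vec{B}}}\big(h(M^{n}-M^{m})\big),
\]
so the key quantity is the Fourier--Stieltjes transform $\widehat{\mu_{\vec{B}}}(t)$. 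Using the invariance identity of Theorem \ref{Hutch} (equivalently iterating the IFS), one obtains the standard infinite product
\[
\widehat{\mu_{\vec{B}}}(t) = \prod_{j=1}^{\infty} \left(\frac{1}{d}\sum_{k=0}^{d-1} e\!\left(\frac{t}{N^{j}}\epsilon_{k}\right)\right),
\]
and then Lemma \ref{keithLemma} bounds each factor by $|\cos(\pi t |\epsilon_{a}-\epsilon_{b}| / N^{j})|$ for the appropriate pair. This is where the multiplicative independence hypothesis enters: after bounding $|\widehat{\mu_{\vec{B}}}(h(M^{n}-M^{m}))|$ by a product of cosines of the form appearing in Lemma \ref{borrowedLemma} (with the roles of $M$ and $N$ there played by $M$ and $N$ here, and absorbing the factor $|\epsilon_{a}-\epsilon_{b}|$ and the difference $M^{n}-M^{m} = M^{m}(M^{n-m}-1)$ into the integer "$h$" and exponent "$n$" of that lemma), the double sum over $m,n$ telescopes into something controlled by $\sum_{n=0}^{L-1}\prod_{k=1}^{\infty}|\cos(N^{-k}hM^{n}\pi)| \le 2L^{1-\alpha(d,M)}$.

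Carrying this out: I would split the double sum into the diagonal $m=n$, contributing exactly $L$, and the off-diagonal terms. For $m \neq n$, write $M^{n}-M^{m} = \pm M^{\min(m,n)}(M^{|n-m|}-1)$; the factor $M^{\min(m,n)}$ is harmless (it can be pushed inside the argument, and $|\cos|$ is even and $1$-antiperiodic in the relevant sense), and $M^{|n-m|}-1$ together with $h|\epsilon_{a}-\epsilon_{b}|$ plays the role of the varying integer to which Lemma \ref{borrowedLemma} applies. Summing the off-diagonal bound over the $O(L)$ choices of difference (and the two signs) against Lemma \ref{borrowedLemma} gives an off-diagonal contribution of order $L \cdot L^{1-\alpha(d,M)} = L^{2-\alpha(d,M)}$, possibly with a small constant. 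Hence
\[
\int_{C_{\vec{B}}} |S_{L}(x)|^{2}\, d\mu_{\vec{B}}(x) \le L + C\, L^{2-\alpha(d,M)} \le C' L^{2-\alpha(d,M)}
\]
for $L$ large, and in fact one wants to track constants carefully enough to get the clean constant $6$ and exponent. Then Chebyshev gives
\[
\mu_{\vec{B}}\left(\left\{x : |S_{L}(x)| \ge L^{1-\delta}\right\}\right) \le \frac{\int |S_{L}|^{2}\, d\mu_{\vec{B}}}{L^{2-2\delta}} \le \frac{C' L^{2-\alpha(d,M)}}{L^{2-2\delta}} = C' L^{2\delta - \alpha(d,M)},
\]
and with $\delta = \alpha(d,M)/3$ the exponent is $2\delta - \alpha = -\alpha/3 = -\delta$, so the measure is $O(L^{-\delta})$; the bookkeeping has to be tightened to land exactly on $6L^{-\delta}$.

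The main obstacle I anticipate is the combinatorial/algebraic bookkeeping needed to force the off-diagonal double sum into \emph{exactly} the shape of Lemma \ref{borrowedLemma}: that lemma has a single product $\prod_{k=1}^{\infty}|\cos(N^{-k}hM^{n}\pi)|$ indexed by one parameter $n$, whereas the natural expansion produces $|\cos(\pi h(M^{n}-M^{m})|\epsilon_{a}-\epsilon_{b}|/N^{j})|$ with the pair $(\epsilon_{a},\epsilon_{b})$ itself depending on $j$. One must argue that replacing the $j$-dependent optimal pair by a uniform bound (e.g. by the maximal gap in $D$, or simply by noting every factor is $\le 1$ so only finitely many "early" factors matter, or by a cruder pairing) still leaves a product to which Lemma \ref{borrowedLemma} — which is really a statement about $|\cos(N^{-k}\cdot(\text{integer})\cdot\pi)|$ for $M\not\sim N$ — can be applied with $h$ replaced by $h|\epsilon_{a}-\epsilon_{b}|$ (still an integer) and the exponent shift by $M^{\min(m,n)}$ absorbed. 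A secondary nuisance is that $\widehat{\mu_{\vec B}}$ is a transform of a measure on $[0,1]$ while Lemma \ref{borrowedLemma} is phrased with an infinite product starting at $k=1$; matching normalizations (the $N^{-k}$ versus $N^{-j}$, and whether the product over $j$ truly equals $\widehat{\mu_{\vec B}}$ or only bounds it) needs a careful but routine check.
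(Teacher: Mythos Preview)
Your proposal is correct and is exactly the paper's argument (adapted from Cassels): product formula for $\widehat{\mu_{\vec B}}$, cosine bound via Lemma~\ref{keithLemma}, reparametrize the double sum by $(\min(m,n),\,|m-n|)$, and for each fixed difference apply Lemma~\ref{borrowedLemma} with $h' = h\,r\,(M^{|m-n|}-1)$ and the lemma's running power $M^{n}$ equal to your $M^{\min(m,n)}$ (so $M^{\min}$ is not ``harmless'' via any periodicity---it is precisely the $M^{n}$ that Lemma~\ref{borrowedLemma} sums over), then Chebyshev; the constant $6$ arises from the paper's crude bound $2(L+2L\cdot L^{1-\alpha})\le 6L^{2-3\delta}$. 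Your flagged obstacle about the $j$-dependence of the maximizing pair $(\epsilon_a,\epsilon_b)$ is legitimate, and the paper does not resolve it either---it silently replaces the $j$-dependent gap by a single $r$---so you are not missing any idea that the paper supplies.
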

\begin{proof}
Adapted from Lemma 3 of \cite{Cas59a}.\\
Note, $e(\lambda x)$ is defined and continuous on the interval $0\leq x\leq 1$. Let $Z_t$ be the set of non-negative integers less than $N^t$ containing only digits in $D$ in their base $N$ expansion. Therefore, by the invariance equation as applied to the push-forward measure $\mu_{\vec{B}}(A)=m(F_{\vec{B}}(A))$ where $m$ is Lebesgue measure, we can calculate
\[\int_{x\in C_{\vec{B}}}e(\lambda x)d\mu_{\vec{B}} = \lim_{t\rightarrow\infty}d^{-t}\sum_{z\in Z_t}e(\lambda N^{-t}z) = \lim_{t\rightarrow
\infty}\prod_{j<t}\left(\frac{1}{d}\sum_{i=0}^{d-1} e(N^{-j}\lambda \epsilon_i) \right).\]
The details of the above calculation are given in \cite{Cas59a}.

Let $\epsilon_a, \epsilon_b\in D$ be such that \[\left |e^{2\pi i\frac{t}{N^j}\epsilon_a}+e^{2\pi i\frac{t}{N^j}\epsilon_b}\right|=\max_{l,m\in\{0,...,d-1\}, l\not=m}\left |e^{2\pi i\frac{t}{N^j}\epsilon_l}+e^{2\pi i\frac{t}{N^j}\epsilon_m}\right|.\] Let $r=|\epsilon_{a}-\epsilon_{b}|$. Note, $r\in\mathbb{N}$, since $|D|\geq 2$ and contains only integers.
Therefore, following from Lemma \ref{keithLemma}, \[\left|\int_{x\in C_{\vec{B}}}e(\lambda x)d\mu_{\vec{B}}\right|\leq\prod_{j=0}^\infty |\cos(N^{-j}\lambda r\pi)|.\]\\
Further, $|z|^2=z\overline{z}$ and $\overline{e(hM^nx)}=e(h(-M^n)x)$, so for any $L\in\mathbb{N}$
\begin{align*}
\int_{x\in C_{\vec{B}}}\left|\sum_{n=0}^{L-1}e(h M^n x)\right|^2d\mu_{\vec{B}} &=\left| \sum_{m=0}^{L-1}\sum_{n=0}^{L-1}\int_{x\in C_{\vec{B}}} e(h(M^n-M^m)x)d\mu_{\vec{B}}\right|\\
&\leq\sum_{m=0}^{L-1}\sum_{n=0}^{L-1}\left| \int_{x\in C_{\vec{B}}} e(h(M^n-M^m)x)d\mu_{\vec{B}}\right|\\
&\leq\sum_{m=0}^{L-1}\sum_{n=0}^{L-1}\prod_{j=0}^\infty \left|\cos(N^{-j}h(M^n-M^m)r\pi)\right|.
\end{align*}
Consider $l=\min(m,n)$ and $k=\max(m,n)-l$, so that $l$ and $k$ determine $m$ and $n$ up to pairs. It follows, \[\sum_{m=0}^{L-1}\sum_{n=0}^{L-1}\prod_{j=0}^\infty |\cos(N^{-j}h(M^n-M^m)r\pi)|\leq 2\sum_{l=0}^{L-1}\sum_{k=0}^{L-1}\prod_{j=0}^\infty |\cos(N^{-j}h(M^l-1)M^k r\pi)|.\] 
When $l=0$, all terms in the product are $\cos(0)=1$ and the inner sum is no more than $L$. Otherwise, by Lemma \ref{borrowedLemma}, the inner sum is less than or equal to $2L^{1-\alpha(d,M)}$ where $\alpha(d,M)>0$. \\
 Therefore, \[\int_{x\in C_{\vec{B}}}\left|\sum_{n=0}^{L-1}e(h M^n x)\right|^2\leq 2(L+L(2L^{1-\alpha(d,M)}))=2L+4L^{2-\alpha(d,M)}<6L^{2-\alpha(d,M)}=6L^{2-3\delta}.\]\\
It follows that the $\mu_{\vec{B}}$-measure of $x\in C_{\vec{B}}$ such that $\left|\sum_{n=0}^{L-1}e(h M^n x)\right|\geq L^{1-\delta}$ is no more than $\frac{6L^{2-3\delta}}{L^{2(1-\delta)}}=6L^{-\delta}$ by Chebychev's inequality.
\end{proof}

Our proof of the following theorem is adapted from \cite{Cas59a}.

\begin{theorem}
Let $C_{\vec{B}}$ be a Cantor set.  Then $\mu_{\vec{B}}$-almost all $x\in C_{\vec{B}}$ are normal to every base $M>1$ such that $M\not\sim N$.
\label{cantorNormal}
\end{theorem}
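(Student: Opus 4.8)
The plan is to invoke the classical Weyl criterion for equidistribution modulo one, together with a Borel--Cantelli argument driven by the measure estimate of Lemma \ref{upperBoundMu}. Recall that a real number $x$ is normal to base $M$ if and only if the sequence $(M^n x)_{n \geq 0}$ is equidistributed modulo one, which by Weyl's criterion is equivalent to
\[
\frac{1}{L}\sum_{n=0}^{L-1} e(h M^n x) \longrightarrow 0 \quad \text{as } L \to \infty, \quad \text{for every } h \in \mathbb{N}.
\]
So it suffices to show that for $\mu_{\vec{B}}$-almost every $x \in C_{\vec{B}}$, and for every fixed $h$ and every $M > 1$ with $M \not\sim N$, the exponential sums $\sum_{n=0}^{L-1} e(hM^n x)$ are $o(L)$. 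Since there are only countably many pairs $(h, M)$ to consider, and a countable union of null sets is null, it is enough to fix $h$ and $M$ and produce a $\mu_{\vec{B}}$-null exceptional set.

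First I would fix $h \in \mathbb{N}$ and $M > 1$ with $M \not\sim N$, let $d = \|\vec{B}\|$, and set $\delta = \alpha(d, M)/3 > 0$ as in Lemma \ref{upperBoundMu}. For each $L \in \mathbb{N}$, let
\[
E_L = \left\{ x \in C_{\vec{B}} : \left| \sum_{n=0}^{L-1} e(h M^n x) \right| \geq L^{1 - \delta} \right\}.
\]
Lemma \ref{upperBoundMu} gives $\mu_{\vec{B}}(E_L) \leq 6 L^{-\delta}$. The obstacle here is that $\sum_L L^{-\delta}$ need not converge (indeed $\delta$ can be tiny), so a naive Borel--Cantelli over all $L$ fails. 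The standard fix, following Cassels, is to pass to a sparse subsequence: choose $L_j = j^{\lceil 2/\delta \rceil}$ (or any polynomial growth making $\sum_j \mu_{\vec{B}}(E_{L_j}) = \sum_j 6 L_j^{-\delta} < \infty$), apply Borel--Cantelli to conclude that $\mu_{\vec{B}}$-almost every $x$ lies in only finitely many $E_{L_j}$, hence satisfies $\left|\sum_{n=0}^{L_j - 1} e(h M^n x)\right| < L_j^{1-\delta}$ for all large $j$.

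Next I would interpolate between consecutive $L_j$: for $L_j \leq L < L_{j+1}$, bound
\[
\left| \sum_{n=0}^{L-1} e(h M^n x) \right| \leq \left| \sum_{n=0}^{L_j - 1} e(h M^n x) \right| + (L_{j+1} - L_j) < L_j^{1 - \delta} + (L_{j+1} - L_j).
\]
Because $L_j = j^{\lceil 2/\delta \rceil}$ grows polynomially, $L_{j+1} - L_j = o(L_j)$ (consecutive gaps in a polynomial sequence are lower-order), and $L_j^{1-\delta} = o(L_j) = o(L)$, so dividing by $L \geq L_j$ shows $\frac{1}{L}\left|\sum_{n=0}^{L-1} e(h M^n x)\right| \to 0$. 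This holds for all $h$ simultaneously off a countable union of null sets, and then for all admissible $M$ off a further countable union of null sets. By Weyl's criterion, every such $x$ is normal to every base $M > 1$ with $M \not\sim N$, which is the claim. The one delicate point to get right in the write-up is choosing the exponent in $L_j$ large enough that both $\sum_j L_j^{-\delta} < \infty$ and the interpolation gap estimate $L_{j+1} - L_j = o(L_j)$ hold; any exponent exceeding $1/\delta$ (and at least $2$, say) works, and I would state it explicitly to keep the argument self-contained.
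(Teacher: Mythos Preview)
Your proposal is correct and follows essentially the same approach as the paper: Weyl's criterion reduces normality to an $o(L)$ bound on the exponential sums, Lemma~\ref{upperBoundMu} gives a measure bound on the exceptional sets, a Borel--Cantelli argument along a sparse subsequence $\{L_j\}$ controls those, and a gap interpolation passes from the subsequence to all $L$. The only cosmetic difference is the choice of subsequence---the paper takes $L_j = \lfloor e^{2\sqrt{j}}\rfloor$ while you take the polynomial $L_j = j^{\lceil 2/\delta\rceil}$---and both choices make $\sum_j L_j^{-\delta}$ converge while keeping $L_{j+1}-L_j = o(L_j)$.
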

\begin{proof}
Fix $M\in\mathbb{N}$. Let $\delta=\frac{\alpha(d,M)}{3}$, where $\alpha$ is defined as in Lemma \ref{borrowedLemma}. Then, $0<\delta<1$.
Let $L_j=\lfloor e^{2\sqrt{j}}\rfloor$. Then, for $j>1$, $L_j^{-\delta}\leq e^{-\delta\sqrt{j}}$, and $\int_{0}^\infty e^{-\delta\sqrt{j}} dj<\infty$, so 
$\sum_{j= 0}^\infty L_j^{-\delta}<\infty$. It follows for every $\epsilon>0$ there exists $J_\epsilon\in\mathbb{N}$ such that \[\sum_{j=J_\epsilon}^\infty 6L_j^{-\delta}<\epsilon.\]
By Lemma \ref{upperBoundMu}, the sum of the $\mu_{\vec{B}}$-measures of the sets $\left\{x : \left|\sum_{n=0}^{L_j-1}e( M^n x)\right|\geq L_j^{1-\delta}\right\}$ for some $j\geq J$ goes to 0 as $J\rightarrow\infty$. Therefore, for $\mu_{\vec{B}}$-almost all $x$ there exists $J_x$ such that \[\left|\sum_{n=0}^{L_j-1}e( M^n x)\right|<L_j^{1-\delta}\text{ for all $j\geq J_x$,}\] so $\left|\sum_{n=0}^{L_j-1}e( M^n x)\right|=o(L_j)$ as $L_j\rightarrow\infty$.

Further, for every $L$ there exists $j_L$ such that $L_{j_L}\leq L < L_{j_L+1}$.
 In addition, \[\left|\sum_{n=0}^{L-1}e( M^n x)-\sum_{n=0}^{L_{j_L}-1}e(M^n x)\right|\leq L-L_{j_L}.\]
 Note, $L-L_{j_L}=o(L)$ as $L\rightarrow\infty$, because $L_{j}$ grows slower than a geometric series.
 Therefore, \[\sum_{n=0}^{L-1}e( M^n x)=o(L)\] as $L\rightarrow\infty$ for $\mu_{\vec{B}}$-almost all $x$.
 
 Note, the set of $x$ such that $\sum_{n=0}^{L-1}e( M^n x)\not=o(L)$ for a fixed $M\in\mathbb{N}$ has $\mu_{\vec{B}}$-measure 0, and the sets of possible $h$ and $M$ are countable. Then, the set of $x$ such that $\sum_{n=0}^{L-1}e( M^n x)\not=o(L)$ for any $M\in\mathbb{N}$ has $\mu_{\vec{B}}$-measure 0 because it is the union of a countable number of sets with $\mu_{\vec{B}}$-measure 0. For $\mu_{\vec{B}}$-almost all $x$, $\sum_{n=0}^{L-1}e(h M^n x)=o(L)$ for all $M\in\mathbb{N}$ such that $M\not\sim N$. By Weyl's criterion in \cite{Wey16a}, then for $\mu_{\vec{B}}$-almost all $x$ and any fixed $M\not\sim N$, the fractional part of the sequence $\{M^n x\}_{n=1}^\infty$ is uniformly distributed. Therefore, $\mu_{\vec{B}}$-almost all $x$ are normal to all bases $M>1$ such that $M\not\sim N$.
\end{proof}

\begin{theorem}
Let $M,N$ be scale factors of the Cantor sets $C_{\vec{B}},C_{\vec{C}}$, respectively. If $M\not\sim N$, then $C_{\vec{B}}\cap C_{\vec{C}}$ is $\mu_{\vec{B}}$-almost empty and $\mu_{\vec{C}}$-almost empty.
\label{emptyInter}
\end{theorem}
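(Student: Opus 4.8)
The plan is to derive this result almost immediately from Theorem~\ref{cantorNormal}, using the elementary observation that a nondegenerate Cantor set is disjoint from the set of numbers normal to its own scale factor. First, note that multiplicative dependence is symmetric, so $M \not\sim N$ is equivalent to $N \not\sim M$; it therefore suffices to establish that $C_{\vec{B}} \cap C_{\vec{C}}$ is $\mu_{\vec{B}}$-almost empty, since the $\mu_{\vec{C}}$-almost-empty conclusion then follows by interchanging the roles of $(\vec{B}, M)$ and $(\vec{C}, N)$.

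The crucial claim is that no point of $C_{\vec{C}}$ is normal to base $N$. Since $C_{\vec{C}}$ is nondegenerate, its digit set $D_2$ is a proper subset of $\mathbb{Z}_N$ with $2 \le \|\vec{C}\| \le N-1$, so $D_2$ omits at least one element of $\{0,1,\dots,N-1\}$. If $x \in C_{\vec{C}}$ is irrational, then its base-$N$ expansion $x = \sum_i n_i N^{-i}$ is unique and satisfies $n_i \in D_2$ for all $i$; the omitted digit then occurs with asymptotic frequency $0 \ne 1/N$, so $x$ is not even simply normal to base $N$. If $x \in C_{\vec{C}}$ is rational, its base-$N$ expansion is eventually periodic, so $x$ is not normal to base $N$ (indeed to no base). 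Hence $C_{\vec{C}}$ is disjoint from the set of reals that are normal to base $N$.

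On the other hand, applying Theorem~\ref{cantorNormal} to $C_{\vec{B}}$, whose scale factor is $M$: for every base $b > 1$ with $b \not\sim M$, $\mu_{\vec{B}}$-almost every $x \in C_{\vec{B}}$ is normal to base $b$. Taking $b = N$ (permissible since $N \ge 3 > 1$ and $N \not\sim M$), we obtain that $\mu_{\vec{B}}$-almost every $x \in C_{\vec{B}}$ is normal to base $N$. Combining this with the previous paragraph, $C_{\vec{B}} \cap C_{\vec{C}}$ is contained in the $\mu_{\vec{B}}$-null set $\{x \in C_{\vec{B}} : x \text{ is not normal to base } N\}$, whence $\mu_{\vec{B}}(C_{\vec{B}} \cap C_{\vec{C}}) = 0$. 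Interchanging the roles of the two Cantor sets gives $\mu_{\vec{C}}(C_{\vec{B}} \cap C_{\vec{C}}) = 0$ as well.

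Essentially all the difficulty has been front-loaded into Theorem~\ref{cantorNormal}, so I do not anticipate a serious obstacle here. The only point requiring a little care is the two-sided base-$N$ expansion at $N$-adic rationals, but this is harmless: such points are rational and hence never normal to any base, so they are already excluded from the normal-to-base-$N$ set and need no separate treatment. The main task in writing this up cleanly is simply stating the irrational/rational dichotomy for points of $C_{\vec{C}}$ precisely.
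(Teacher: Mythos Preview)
Your proof is correct and follows essentially the same approach as the paper: apply Theorem~\ref{cantorNormal} to conclude that $\mu_{\vec{B}}$-almost every point of $C_{\vec{B}}$ is normal to base $N$, observe that no point of $C_{\vec{C}}$ can be normal to base $N$ since its digit set omits a digit, and then invoke symmetry. Your write-up is in fact more careful than the paper's in two respects: you correctly identify that the relevant base is $N$ (the paper's proof contains an apparent $M$/$N$ slip), and you explicitly handle the rational/irrational dichotomy for points of $C_{\vec{C}}$ rather than leaving it implicit.
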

\begin{proof}
Let $M,N$ be scale factors of the Cantor sets $C_{\vec{B}},C_{\vec{C}}$, respectively, and $M\not\sim N$.\\
By Theorem \ref{cantorNormal}, $\mu_{\vec{B}}$-almost all of the elements in $C_{\vec{B}}$ are normal in base $M$. Since normal numbers contain all of the digits, it follows $\mu_{\vec{B}}$-almost all of the elements in $C_{\vec{B}}$ are not in $C_{\vec{C}}$. Similarly, $\mu_{\vec{C}}$-almost all of the elements in $C_{\vec{C}}$ are normal in base $N$ and likewise are not elements of $C_{\vec{B}}$. Therefore, it follows their intersection is $\mu_{\vec{C}}$-almost empty and $\mu_{\vec{B}}$-almost empty.
\end{proof}

Note, normality is a stronger condition than necessary to show an element is not in any Cantor set with scale factor $N$. In fact, it only must have every digit appear at least once.
\begin{cor}
For every Cantor set $C_{\vec{B}}$, there exist irrational numbers in $C_{\vec{B}}$ normal to every base $M$ such that $M\not\sim N$.
\label{irrInC}
\end{cor}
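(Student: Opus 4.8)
The plan is to read the corollary off of Theorem~\ref{cantorNormal} together with the fact that $\mu_{\vec{B}}$ is non-atomic. Write $A$ for the set of those $x \in C_{\vec{B}}$ that are normal to every base $M > 1$ with $M \not\sim N$. Theorem~\ref{cantorNormal} asserts exactly that $\mu_{\vec{B}}(C_{\vec{B}} \setminus A) = 0$, hence $\mu_{\vec{B}}(A) = 1$. So it remains only to produce an irrational point in $A$; I will in fact show $\mu_{\vec{B}}(A \cap \mathbb{Q}^{c}) = 1 > 0$, which forces $A \cap \mathbb{Q}^{c}$ to be nonempty (indeed uncountable).

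The one ingredient to record is that $\mu_{\vec{B}}$ is non-atomic: since $F_{\vec{B}}$ is continuous (noted just after Definition~\ref{D:CDF}), we have $\mu_{\vec{B}}(\{x\}) = F_{\vec{B}}(x) - \lim_{y \to x^{-}} F_{\vec{B}}(y) = 0$ for every $x$. Therefore the countable set $\mathbb{Q} \cap [0,1]$ is $\mu_{\vec{B}}$-null, and $\mu_{\vec{B}}(A \cap \mathbb{Q}^{c}) = \mu_{\vec{B}}(A) - \mu_{\vec{B}}(A \cap \mathbb{Q}) = 1$. Equivalently, one could skip the measure computation entirely and just observe that any element of $A$ is automatically irrational, because a number that is normal to some base has an aperiodic expansion in that base and so cannot be rational; combined with $\mu_{\vec{B}}(A) = 1 > 0$ this again gives a point of $A$ that is irrational.

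I would also note, to keep the statement from being vacuous, that there is always at least one base $M > 1$ with $M \not\sim N$: for example any prime $M$ not dividing $N$ works, since $M^{a} = N^{b}$ for integers $a,b$ not both zero is impossible (the case $a,b > 0$ would force $M \mid N$, and cases with a nonpositive exponent are excluded because $M, N > 1$). I do not anticipate a real obstacle here; the proof is short, and the only places demanding a moment's care are recording that $\mu_{\vec{B}}$ gives the rationals measure zero and verifying that an admissible base $M$ exists.
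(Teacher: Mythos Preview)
Your proof is correct and follows essentially the same approach as the paper: both invoke Theorem~\ref{cantorNormal} to get that $\mu_{\vec{B}}$-almost every point of $C_{\vec{B}}$ is normal to all bases $M\not\sim N$, then discard the rationals as a $\mu_{\vec{B}}$-null (countable) set. Your version is somewhat more careful---explicitly recording non-atomicity of $\mu_{\vec{B}}$, noting the alternative that normality already forces irrationality, and checking that an admissible $M$ exists---but the core argument is the same.
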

\begin{proof}
There are uncountably many elements in $C_{\vec{B}}$, however, there are only countably many rationals. Further, by Theorem \ref{cantorNormal}, $\mu_{\vec{B}}$-almost all of the elements in $C_{\vec{B}}$ are normal to multiplicatively independent bases. It follows that $\mu_{\vec{B}}$-almost all of the elements in $C_{\vec{B}}$ must be irrational and normal in multiplicatively independent bases.
\end{proof}
\subsubsection{Using Samples}
\begin{lemma}
Let $C_{\vec{B}}$ be a Cantor set with scale factor $N$. Let $D_1,...,D_k$ be all possible digit sets of $N$ with cardinality 2, and associate $\vec{B}_1,...,\vec{B}_k$ with such digit sets. Then for any set of irrationals $\{x_i\in C_{\vec{B}_i}:i=1,...,k\}$, $D$ can be uniquely determined from $\{(x_i,F_{\vec{B}}(x_i))\}_{i=1}^{k}$. In particular, $D=\bigcup_{i\in A} D_i$ where $A=\{i \mid F_{\vec{B}}(x_i)\in\mathbb{Q}^c\}$  
\label{detDigitSet}
\end{lemma}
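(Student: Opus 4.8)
The plan is to use the characterization of membership in a Cantor set in terms of rationality of the CDF value, which is exactly what Lemma \ref{IrrToIrr} and its corollary give us. Recall that $C_{\vec{B}}$ is determined by its digit set $D \subset \mathbb{Z}_N$, and since every two-element digit set $D_i$ satisfies $D_i \subset \{0,1,\dots,N-1\}$, the union $\bigcup_i D_i$ ranges over all of $\mathbb{Z}_N$ (as $N \geq 3$, every digit lies in some pair). So the claim is that the unknown digit set $D$ is recovered as the union of exactly those pairs $D_i$ for which the sample $x_i$, chosen from $C_{\vec{B}_i}$, lands on an irrational CDF value.

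First I would fix $i$ and analyze the dichotomy for the pair $D_i$. By hypothesis $x_i$ is irrational and $x_i \in C_{\vec{B}_i}$. There are two cases. If $D_i \subseteq D$, then $C_{\vec{B}_i} \subseteq C_{\vec{B}}$ (since the $N$-adic expansion of any point of $C_{\vec{B}_i}$ uses only digits from $D_i \subseteq D$), so $x_i \in C_{\vec{B}}$; being irrational, Lemma \ref{IrrToIrr} gives $F_{\vec{B}}(x_i) \in \mathbb{Q}^c$. If instead $D_i \not\subseteq D$, then at least one of the two digits of $D_i$, call it $\epsilon$, is not in $D$; I would argue that $x_i \notin C_{\vec{B}}$. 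Indeed, the $N$-adic expansion of $x_i$ uses both digits of $D_i$ infinitely often — this is where I need that $x_i$ is irrational, hence its digit sequence is not eventually periodic, so in particular it cannot be eventually constant and must use the digit $\epsilon$ infinitely often (if it used only the other digit of $D_i$ from some point on, the tail would be constant, contradicting non-periodicity). Hence $x_i$ has a digit $\epsilon \notin D$ appearing in its expansion, so $x_i \notin C_{\vec{B}}$, and by Corollary \ref{noCthenQ} (i.e. Lemma \ref{IrrToIrr} in the $x \notin C_{\vec{B}}$ direction) $F_{\vec{B}}(x_i) \in \mathbb{Q}$.

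Combining the two cases: $F_{\vec{B}}(x_i) \in \mathbb{Q}^c$ if and only if $D_i \subseteq D$. Therefore $A = \{i \mid F_{\vec{B}}(x_i) \in \mathbb{Q}^c\} = \{i \mid D_i \subseteq D\}$, and since $D$ has at least two elements, every two-element subset of $D$ appears among the $D_i$ with $i \in A$, while no $D_i$ containing an element outside $D$ does; hence $\bigcup_{i \in A} D_i = D$. This recovers $D$, and thus $\vec{B}$, from the data $\{(x_i, F_{\vec{B}}(x_i))\}_{i=1}^k$, as claimed. The one subtle point — the main obstacle — is the claim that an irrational $x_i \in C_{\vec{B}_i}$ actually uses \emph{both} digits of $D_i$ infinitely often in its $N$-adic expansion; this needs the non-eventual-periodicity of the digit sequence of an irrational number (and a small care about the non-uniqueness of $N$-adic expansions, which only affects $N$-adic rationals and hence not our irrational $x_i$), but it is exactly the mechanism already exploited in the proof of Lemma \ref{IrrToIrr}.
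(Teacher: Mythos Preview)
Your proposal is correct and follows essentially the same route as the paper: both arguments establish the biconditional $F_{\vec{B}}(x_i)\in\mathbb{Q}^c \iff D_i\subseteq D$ via Lemma~\ref{IrrToIrr} (using that an irrational $x_i\in C_{\vec{B}_i}$ must use both digits of $D_i$ in its $N$-adic expansion), and then recover $D$ as $\bigcup_{i\in A}D_i$ using $|D|\ge 2$. Your write-up is in fact slightly more careful than the paper's in justifying why both digits of $D_i$ must occur (non-eventual-periodicity of the digit string of an irrational) and in flagging the non-uniqueness-of-expansion issue as irrelevant for irrational $x_i$.
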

\begin{proof}
Let $D_1,...,D_k$ be all the digits sets of cardinality 2 for scale factor $N$. Let $\vec{B}=(b_0,...,b_{N-1})$ be the binary representation of $C_{\vec{B}}$.\\
 Consider $x_i\in C_{\vec{B}_i}$. Since $x_i$ is irrational and $|D_i|=2$, the decimal expansion of $x_i$ in base $N$ must contain both digits in $D_i$. Then, $x_i\in C_{\vec{B}}$ if and only if $D_i\subseteq D$. Then, by Lemma \ref{IrrToIrr}, $F_{\vec{B}}(x_i)$ is irrational if and only if $D_i\subseteq D$.
Let $A=\{i \mid F_{\vec{B}}(x_i)\in\mathbb{Q}^c\}$. Then, $\bigcup_{i\in A} D_i \subseteq D$.
Next, consider $\epsilon_1\in D$. Since $\|\vec{B}\|\geq 2$, there exists an $\epsilon_2\in D$, $\epsilon_1\not=\epsilon_2$. Further, there exists a $j$ such that $D_j=\{\epsilon_1,\epsilon_2\}$. Then, $D_j\subseteq D$, $x_j\in C_{\vec{B}}$ and $F_{\vec{B}}(x_j)$ will be irrational by Lemma \ref{IrrToIrr}. It follows $j\in A$, and therefore $D_j\subseteq \bigcup_{a\in A} D_a $. Thus, $\epsilon_1\in \bigcup_{a\in A} D_a$. Since $\epsilon_1$ is arbitrary, it follows $\bigcup_{a\in A} D_a= D$ where $A=\{i \mid F_{\vec{B}}(x_i)\in\mathbb{Q}^c\}$.
\end{proof}
\begin{theorem}
Given $K$, there exists a constant $M:=M(K)$ such that there exists $\{x_i\}_{i=1}^{M(K)}\subset(0,1)$ which is a set of uniqueness for $\mathscr{G}_K$. The constant $M(K)=O(K^3)$.
\label{maintheorem}
\end{theorem}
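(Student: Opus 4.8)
The plan is to build the set of uniqueness as a union of blocks, one block for each admissible scale factor $N \in \{3, 4, \dots, K\}$, and then invoke the rationality dichotomy together with the almost-nowhere intersection theorem to separate CDFs with multiplicatively independent scale factors, handling the multiplicatively dependent case separately via the $N$-adic sampling of Proposition \ref{depFix}.

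First I would fix, for each $N \le K$, the $\binom{N}{2} = O(N^2)$ digit sets $D^{(N)}_1, \dots, D^{(N)}_{k_N}$ of cardinality $2$, and for each such digit set choose (by Corollary \ref{irrInC}) an irrational point $x^{(N)}_j \in C_{\vec B^{(N)}_j}$ that is normal to every base multiplicatively independent from $N$; call the resulting finite collection $T_N$. I would also throw in, for each pair $(L, M)$ with $N = $ (something) — more precisely, for each $N \le K$ — the finite $N$-adic grid $S_N = \{ m/N^{2\lceil \log_2 K\rceil} \}$ needed to apply Proposition \ref{depFix} and Theorem \ref{uniqueDet}; each such grid has $O(K^{?})$ points, and here I must be careful to keep the total size $O(K^3)$ (see the obstacle paragraph). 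The claimed set of uniqueness is $X = \bigcup_{N=3}^{K} (T_N \cup S_N)$.

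Next I would argue uniqueness. Suppose $F_{\vec B}, F_{\vec C} \in \mathscr{G}_K$ agree on all of $X$; say $\vec B$ has scale factor $N$ and $\vec C$ has scale factor $N'$. If $N \not\sim N'$, then by Theorem \ref{emptyInter} the Cantor sets $C_{\vec B}$ and $C_{\vec C}$ have $\mu_{\vec B}$- and $\mu_{\vec C}$-almost-empty intersection; combined with Lemma \ref{detDigitSet}, the samples in $T_N$ recover the digit set of $F_{\vec B}$ and the samples in $T_{N'}$ recover that of $F_{\vec C}$, and one checks that agreement on these points forces both digit sets (hence both CDFs, since a digit set determines its CDF) to coincide — but two Cantor sets with genuinely different (multiplicatively independent) scale factors and the \emph{same} CDF is impossible, because e.g. evaluating at the appropriate $N$-adic rationals in $S_N$ via Lemma \ref{fixedpoints} pins down the scale factor. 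If instead $N \sim N'$, write $N = p^a$, $N' = p^b$ for the common base, pass to the common refinement scale factor, and apply Proposition \ref{depFix}: the grid $S_{\text{common}}$ (contained in $X$ provided $K$ is large enough to force $\text{lcm}$-type refinements below $K$, which is the technical point to verify) forces $F_{\vec B} = F_{\vec C}$.

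The main obstacle is the \textbf{cardinality bookkeeping}: naively, using $N$-adic grids of the form $\{m/N^{L+M}\}$ has exponentially many points, so one cannot afford the full strength of Proposition \ref{depFix} for every pair. I expect the resolution is to avoid $S_N$ entirely for the multiplicatively \emph{independent} case (where $T_N$, of size $O(N^2)$, suffices, summing to $\sum_{N \le K} O(N^2) = O(K^3)$), and to observe that for the multiplicatively \emph{dependent} case within $\mathscr{G}_K$ one only ever needs scale factors $N, N'$ that are both powers of a common base $p \le K$, so the relevant refinement scale factor is at most $K^2$ — wait, that is already too big; the right move is presumably that two CDFs with scale factors $N \le K$ and $N' \le K$ that are multiplicatively dependent must in fact both be $F_{\vec B}$ for some $\vec B$ of scale factor $\le K$, and agreement of such CDFs on the single grid $\{m/K!\}$ or on $\{m/(\mathrm{lcm}(1,\dots,K))\}$ — still potentially large — forces equality; so the genuinely delicate step is to show the dependent case can be absorbed into an $O(K^3)$-size grid, perhaps by noting that for each fixed $N$ the grid $\{ m/N^{c} \}$ with $c = O(\log K)$ already handles all $N' \sim N$ with $N' \le K$ simultaneously (since such $N'$ share a bounded-exponent common base), giving $O(N^{O(\log K)})$ — which is \emph{not} polynomial, signalling that the actual argument must instead reduce the dependent case to Theorem \ref{uniqueDet} / Corollary after a single base change and never needs a large grid at all. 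Pinning down exactly which small grid suffices, and verifying its size is $O(K^3)$, is where the real work lies; everything else is assembling the already-proved lemmas.
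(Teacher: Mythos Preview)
Your construction of the blocks $T_N$ and the use of Corollary~\ref{irrInC} together with Lemma~\ref{detDigitSet} to eliminate scale factors multiplicatively independent from the true one is exactly the paper's approach (its set $S_1$), and your size count $\sum_{N\le K}\binom{N}{2}=O(K^3)$ is correct.

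The gap you flag in the multiplicatively dependent case is not a real obstacle; you talked yourself out of the right answer. You observed that the refinement scale factor is at most $K^2$ and then wrote ``wait, that is already too big'' --- it is not. If $N=J^{L_1}$ and $N'=J^{L_2}$ are both at most $K$, let $L_K$ be the largest exponent with $J^{L_K}\le K$; then $L_1,L_2\le L_K$, so $L_1+L_2\le 2L_K$ and the grid required by Proposition~\ref{depFix} has denominator dividing $J^{2L_K}=(J^{L_K})^2$ with $J^{L_K}\in\{2,\dots,K\}$. The paper therefore takes
\[
S_2=\bigcup_{M=2}^{K}\Bigl\{\tfrac{m}{M^2}:1\le m\le M^2-1\Bigr\},
\]
which contains every grid Proposition~\ref{depFix} could demand and has $\sum_{M=2}^{K}(M^2-1)=O(K^3)$ points. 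Your detour through $N^{O(\log K)}$ came from indexing the grid by $N$ rather than by the common base $J$ and its largest power $J^{L_K}$ not exceeding $K$; once you index this way the bookkeeping is a one-line sum of squares, and no exotic reduction to Theorem~\ref{uniqueDet} is needed.
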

\begin{proof}
Let $F_{\vec{B}}$ be a CDF with scale factor $N\leq K$. We proceed with $C_{N,D}$ (instead of $C_{\vec{B}}$ notation as it makes the proof clearer.

For every $M$, $3\leq M\leq K$, there exist $\binom{M}{2}=\frac{M(M-1)}{2}$ unique Cantor sets $C_{M,D_i}$ such that $|D_i|=2$. Further, by Corollary \ref{irrInC}, each of these Cantor sets contain an irrational element (in fact, almost all elements) which is normal to all bases multiplicatively independent of $M$.
Then, for every $M$ and $|D_i|=2$, there exists $x\in C_{M,D_i} \cap \mathbb{Q}^c$ such that $\forall L \not\sim M, L\leq K$, $x$ in base $L$ has all possible digits in its representation.

Choose one such element for each $C_{M,D_i}$, and denote it $x_{M,D_i}$; let \[S_1=\{x_{M,D_i} \mid 3\leq M\leq K, |D_i|=2\}\] Note, $|S_1|\leq\frac{K(K-1)(K-3)}{2}$.

For any $M\not\sim N$ and $D_i$ with $|D_i|=2$, $x_{M,D_i}$ contains every possible digit in $N$. Then, since any $x\in C_{\vec{B}}$ cannot contain every digit in base $N$, $x_{M,D_i}\not\in C_{\vec{B}}$. It follows from Corollary \ref{noCthenQ} that $F_{\vec{B}}(x_{M,D_i})\in\mathbb{Q}$.

Suppose now that there exists a CDF with scale factor $M$ passing through all of the points $\{(x_i, F_{\vec{B}}(x_i))\mid x_i\in S_1\}$. Since this is true for all $D_i$ corresponding with $M$, by Lemma \ref{detDigitSet} the digit set of the CDF is be empty, a contradiction. Thus, no such CDF exists and $M$ can be eliminated as a scale factor.

Thus, all possible scale factors remaining are multiplicatively dependent to $N$. Therefore, there is a fixed $J\in\mathbb{N}$ such that for each possible scale factor $N'$, $N'=J^{L_{N'}}$ for some $L_{N'}\in\mathbb{N}$. Further, by Lemma \ref{detDigitSet}, for each $N'$ there exists at most one digit set, $\vec{B'}$, such that $F_{\vec{B}}(x)=F_{\vec{B'}}(x)$ for all $x\in S_1$.

By Proposition \ref{depFix}, for all $L_1,L_2\in\mathbb{N}$ and $\vec{B}_1,\vec{B}_2$ with scale factors $J^{L_1},J^{L_2}$, respectively, either $F_{\vec{B}_1}=F_{\vec{B}_2}$ or only one agrees with $\left\{\left(\frac{m}{J^{L+M}}, F_{\vec{B}}\left(\frac{m}{J^{L+M}}\right)\right)\right\}_{m=1}^{J^{L+M}-1}$. Note that for any $J$, $2\leq J\leq K$, there is $L_K\in\mathbb{N}$ such that $J^{L_K}\leq K < J^{L_K+1}$. The set of rational numbers expressible with denominator $J^{2L_K}$ includes the set of rational numbers expressible with denominator $J^{L}$ for $L\leq 2L_K$. Note, $J^{L_1}, J^{L_2} \leq K$ implies $L_1+L_2\leq 2L_K$.

Since $J^{L_K}\in\{2,3,..,K\}$, sampling at \[S_2=\left\{\frac{m}{M}\right\}_{m=1, M\in\{2^2,3^2,4^2,...,K^2\}}^{M-1}\] is sufficient to differentiate all mutliplicatively dependent bases no more than $K$. Hence sampling at $\left\{\frac{m}{(J^{L_K})^2}\right\}_{m=1}^{\left(J^{L_K}\right)^2-1}$
is sufficient for differentiating all bases multiplicatively dependent to $J$. It follows, of the remaining CDFs, only CDFs equivalent to $F_{\vec{B}}$ will pass through all the points $\{(x,F_{\vec{B}}(x))\mid
x\in S_2\}$, and all non-equivalent CDFs can be eliminated.

For any remaining CDFs $F$, $F=F_{\vec{B}}$. Thus, $S=S_1\cup S_2$ is sufficient to reconstruct $F_{\vec{B}}$.

Finally, we note that since $|S_2|\leq 1^2+2^2+3^2+...+K^2=\frac{K(K+1)(2K+1)}{6}$, $|S|\leq\frac{K(K-1)(K-3)}{2}+\frac{K(K+1)(2K+1)}{6}=\frac{5}{6}K^3-\frac{3}{2}K^2+\frac{5}{3}K=O(K^3)$.

\end{proof}

\begin{cor}
There exists a set of uniqueness for $\mathscr{G}_K$ with sample complexity $O(K^3)$.
\end{cor}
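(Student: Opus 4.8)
The plan is short: this corollary is nothing more than a restatement of Theorem \ref{maintheorem} in the language of sample complexity, so I would simply invoke that theorem. Recall that in the proof of Theorem \ref{maintheorem} we constructed, for a given bound $K$ on the (unknown) scale factor, an explicit finite set $S = S_1 \cup S_2 \subset (0,1)$ and showed that whenever two CDFs in $\mathscr{G}_K$ agree on every point of $S$ they must coincide; that is, $S$ is a set of uniqueness for $\mathscr{G}_K$. So the only thing to supply is the observation that the cardinality of $S$ is the sample complexity of the associated (non-conditional) sampling scheme.

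It then remains only to record the size estimate already obtained there. The proof of Theorem \ref{maintheorem} bounds
\[
|S| \le \frac{K(K-1)(K-3)}{2} + \frac{K(K+1)(2K+1)}{6} = \frac{5}{6}K^3 - \frac{3}{2}K^2 + \frac{5}{3}K,
\]
whose leading term is cubic in $K$, so $|S| = O(K^3)$, which is exactly the claim. I expect no genuine obstacle here, since the entire content lives in Theorem \ref{maintheorem}; any further work would go toward sharpening the implied constant (or pairing this with a matching lower bound via Lemma \ref{lessthanN/2}), which lies outside the scope of the statement as given.
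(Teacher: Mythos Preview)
Your proposal is correct and matches the paper's approach: the corollary is stated immediately after Theorem \ref{maintheorem} with no separate proof, since it is a direct restatement of that theorem together with the cardinality bound $|S| = O(K^3)$ already computed there. Nothing further is needed.
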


\begin{remark}
CDFs equivalent to $F_{\vec{B}}$  will not be eliminated by the algorithm described in Theorem \ref{maintheorem}, which only eliminates CDFs which do not pass through all the points. Then, the algorithm will produce all equivalent CDFs with scale factor less than $K$, which includes the CDF with the smallest possible scale factor, and the smallest possible scale factor can be determined.
\end{remark}
\begin{remark}
Since CDFs are equivalent only if their underlying Cantor sets are equal, the algorithm also reconstructs the underlying Cantor set $C_{\vec{B}}$.
\end{remark}
\section{Conclusion and Future Research}
With a upper scale factor bound of $K$, and $O(K^3)$ points, a CDF of any Cantor set can be completely reconstructed. While a minimum number of points has not been determined, there is a lower bound dependent upon the maximum possible scale factor. Further, many of the points sampled in Theorem \ref{maintheorem}, those in $S_1$, are not specific. Almost all of the points in the given Cantor set will suffice. 

If the scale factor $N$ is known, then $N-1$ well chosen points is enough to determine the digit set $D$. However, this is not the minimum number. A future research question would be to determine the minimum number of points necessary to determine the digit set. 

\section{Acknowledgements}

Allison Byars, Evan Camrud, Sarah McCarty, and Keith Sullivan were supported in part by the National Science Foundation under award \#1457443.

Steven Harding was supported in part by the National Geospatial-Intelligence Agency under award \#1830254.

Eric Weber was supported in part by the National Science Foundation under award \#1934884 and the National Geospatial-Intelligence Agency under award \#1830254.

\bibliographystyle{amsplain}
\bibliography{cdfbib}
\nocite{*}

\section{Appendix}

\begin{algorithm}\label{alg1}
\caption{Finding $\|\protect\vec{B}\|$ and the first nonzero digit of $\protect\vec{B}$}
Since $g_{\vec{B}}(k)=\sum_{j=0}^{k-1} b_j$ we have that $g(2m-2)=\sum_{j=0}^{2m-3} b_j$ such that we know $g(2m-2)$ recursively dependent on sample values and their determination of each $b_j$. We further recall that $\ell$ is a large enough integer, and that the smallest positive $\ell$ such that $2^{\ell + 1} > N - 1$ is sufficient.

\begin{algorithmic}
\STATE Initialize $m = 1$
\WHILE{$g_{\vec{B}}(2m - 2) = 0$}
\STATE Sample $F_{\vec{B}}$ at $$x = \frac{2m}{N^{\ell + 1}} + \sum_{n = 1}^\ell\frac{2m - 1}{N^n}.$$
\IF{$F_{\vec{B}}(x) = 0$}
\STATE $b_{2m - 2} = 0$
\STATE $b_{2m - 1} = 0$
\STATE $m = m + 1$
\ELSE
%fix formatting below
    \IF{$F_{\vec{B}}(x) = 1/\mathcal{\vec{B}}$ for some $2 \leq \mathcal{\vec{B}} \leq N - 1$}
    \STATE $\|\vec{B}\| = \mathcal{\vec{B}}$
    \STATE $b_{2m - 2} = 1$
    \STATE $b_{2m - 1} = 0$
    \ELSIF{$F_{\vec{B}}(x) = 1/\mathcal{\vec{B}}^{\ell+1}$ for some $2 \leq \mathcal{\vec{B}} \leq N - 1$}
    \STATE $\|\vec{B}\| = \mathcal{\vec{B}}$
    \STATE $b_{2m - 2} = 0$
    \STATE $b_{2m - 1} = 1$
    \ELSE
    \STATE There is an integer $2 \leq \mathcal{\vec{B}} \leq N - 1$ such that $$F_{\vec{B}}(x) = \frac{2}{\mathcal{\vec{B}}^{\ell + 1}} + \sum_{n=1}^\ell\frac{1}{\mathcal{\vec{B}}^n}$$
    \STATE $\|\vec{B}\| = \mathcal{\vec{B}}$
    \STATE $b_{2m - 2} = 1$
    \STATE $b_{2m - 1} = 1$
    \ENDIF
    \STATE Break
\ENDIF
\ENDWHILE
\IF{$N = 2m$}
\STATE Return
\ELSIF{$N = 2m + 1$}
    \IF{$g_{\vec{B}}(N - 1) = \|\vec{B}\|$}
    \STATE $b_{N - 1} = 0$
    \ELSE
    \STATE $b_{N - 1} = 1$
    \ENDIF
\ELSE
\STATE Proceed to Algorithm 2
\ENDIF
\end{algorithmic}
\end{algorithm}

\begin{algorithm}
\caption{Finding the remaining digits of $\protect\vec{B}$}
\begin{algorithmic}
\STATE Initialize $M = \left\lfloor\frac{N}{2}\right\rfloor$
\FOR{$\mathfrak{m} = 1,2,...,M - m$}
\IF{$g_{\cev{B}}(2m - 2) = \|\vec{B}\| - 1$}
\STATE $b_{N - 1 - (2\mathfrak{m} - 2)} = 0$
\STATE $b_{N - 1 - (2\mathfrak{m} - 1)} = 0$
\ELSE
\STATE Sample $F_{\cev{B}}$ at $$x = \frac{2\mathfrak{m}}{N^{\ell+1}} + \sum_{n=1}^\ell\frac{2\mathfrak{m} - 1}{N^n}$$
    \IF{$F_{\cev{B}}(x) = \frac{g_{\cev{B}}(2\mathfrak{m} - 2)}{\|\vec{B}\|}$}
    \STATE $b_{N - 1 - (2\mathfrak{m} - 2)} = 0$
    \STATE $b_{N - 1 - (2\mathfrak{m} - 1)} = 0$
    \ELSIF{$F_{\cev{B}}(x) = \frac{g_{\cev{B}}(2\mathfrak{m} - 2) + 1}{\|\vec{B}\|}$}
    \STATE $b_{N - 1 - (2\mathfrak{m} - 2)} = 1$
    \STATE $b_{N - 1 - (2\mathfrak{m} - 1)} = 0$
    \ELSIF{$$F_{\cev{B}}(x) = \frac{g_{\cev{B}}(2\mathfrak{m} - 2)}{\|\vec{B}\|^{\ell + 1}} + \sum_{n = 1}^\ell\frac{g_{\cev{B}}(2\mathfrak{m} - 3)}{\|\vec{B}\|^n}$$}
    \STATE $b_{N - 1 - (2\mathfrak{m} - 2)} = 0$
    \STATE $b_{N - 1 - (2\mathfrak{m} - 1)} = 1$
    \ELSE
    \STATE $b_{N - 1 - (2\mathfrak{m} - 2)} = 1$
    \STATE $b_{N - 1 - (2\mathfrak{m} - 1)} = 1$
    \ENDIF
\ENDIF
\ENDFOR
\IF{$N$ is odd}
    \IF{$$\sum_{n = 0}^{2m - 1}b_n + \sum_{n = 2m + 1}^{N - 1}b_n = \|\vec{B}\|$$}
    \STATE $b_{2m} = 0$
    \ELSE
    \STATE $b_{2m} = 1$
    \ENDIF
\ENDIF
\end{algorithmic}
\end{algorithm}

\bibliography{cdfbib.bib}

\end{document}